\documentclass{article}
\usepackage{authblk}
\usepackage[T1]{fontenc}
\usepackage{lmodern}
\usepackage{amsfonts,amsmath,amssymb,amsthm}
\usepackage{graphicx,color}
\usepackage[margin=1in]{geometry}
\usepackage{tikz-cd}
\usepackage{tikz}
\usepackage{blkarray}
\usepackage{enumitem}
\usepackage{caption}
\usepackage{subcaption}
\usepackage{cite}
\usepackage[dvipsnames]{xcolor}
\usepackage{comment}
\usepackage[colorlinks=true,linkcolor=RoyalBlue,citecolor=PineGreen,urlcolor=RoyalBlue]{hyperref}
\usepackage{cleveref}
\usepackage{thmtools} 
\usepackage{thm-restate}
\usepackage{kbordermatrix}

\newtheorem{theorem}{Theorem}[section]
\newtheorem{proposition}[theorem]{Proposition}
\newtheorem{lemma}[theorem]{Lemma}
\newtheorem{corollary}[theorem]{Corollary}
\newtheorem{conjecture}[theorem]{Conjecture}

\theoremstyle{definition}
\newtheorem{definition}[theorem]{Definition}
\newtheorem{example}[theorem]{Example}
\theoremstyle{remark}
\newtheorem{remark}[theorem]{Remark}

\newcommand{\Stab}{\textup{Stab}} 
\newcommand{\im}{\textup{im}} 
\newcommand{\codim}{\textup{codim}} 

\title{Homological methods in rigidity theory using graphs of groups}
\author{Joannes Vermant\thanks{Ume{\aa} Universitet, Universitetstorget 4, 901 87 Umeå, Sweden}}
\date{}
\begin{document}

\maketitle

\begin{abstract}
In recent work, Stokes and Vermant considered graph-of-groups realisations of hypergraphs as a new description of rigidity-theoretic problems. In this paper, we show that the infinitesimal aspects of graph-of-groups realisations can be analysed using cellular sheaves and their cohomology. Using these tools, we give an algebraic condition for Henneberg moves to preserve independence, and we prove that the infinitesimal rigidity and flexibility of certain graph-of-groups realisations are generic properties. We use these results to show that whenever a rigidity-theoretic problem is defined in a Lie group $G$ using a $1$-dimensional connected subgroup $H$ with $N_{G}(H)/H$ finite, then the so-called Maxwell-count leads to a necessary and sufficient condition for minimal rigidity, generalising various known results in the literature.
\end{abstract}

\section{Introduction}\label{sec: Introduction}
In structural rigidity theory, one studies bar-joint frameworks, which are graphs embedded in Euclidean space, such that each edge is considered as a rigid bar and each vertex is considered as a universal joint. A bar-joint framework is said to be rigid if there are no non-trivial ways to deform the framework without breaking it. For almost all choices of coordinates, it turns out that rigidity is a property that depends only on the underlying graph \cite{cf04a725-5f21-3920-aafd-d5f8dd4a83ae}, and one can hence speak of rigidity as a graph-theoretical property. 

In dimension $1$, rigidity is characterised by connectivity, and in dimension $2$, a combinatorial characterisation of rigidity is given by the Geiringer-Laman theorem \cite{PollaczekGeiringer1927, Laman1970}. In dimension $3$ and higher, characterising rigidity is an open problem. The characterisation in dimension $2$ describes rigidity in terms of a so-called sparsity matroid \cite[Section 13.5]{Frank2012}, allowing for algorithms that check rigidity in polynomial time. Additionally, researchers have defined and studied the rigidity of many other combinatorial-geometric structures, proving that they are characterised by sparsity conditions \cite{ Whiteley1989, TAY198495, Tay1989, Katoh2011, 10.1093/imrn/rny170}. 

In this paper, we further develop the theory of graph-of-groups realisations, building upon the article by Stokes and the present author \cite{graphsofgroups}. Graph-of-groups realisations provide a general framework for rigidity theory, which describes rigidity using only (Lie) group theory. One can describe the position of an articulated structure by only keeping track of the stabiliser subgroup of each geometric element. One can then describe the motions of the structure using group elements and the given subgroups. In \cite{graphsofgroups}, the approach is shown to model the classical theory of bar-joint frameworks; it also models, for example, scenes and parallel redrawings \cite{Whiteley1989} and projective rigidity \cite{https://doi.org/10.48550/arxiv.2407.17836}. 

For all graph-of-groups realisations, one can derive a counting condition similar to the one in the Geiringer-Laman theorem, which provides a necessary condition for rigidity. In this article, we make progress towards answering the question of when this condition is also sufficient. To do this, one needs to first define \textit{generic rigidity}, a problem that was left unresolved in \cite{graphsofgroups}. We prove a genericity result for a broad class of realisations in \Cref{thm: Generic}.
The main theorem states that for sufficiently generic graph-of-groups realisations, where the stabilisers have dimension $1$, the rigidity is characterised by the necessary condition. See \Cref{thm: main theorem}, and \Cref{cor: main - thm} for a precise statement. 

Similar statements have been conjectured for another general rigidity model, called $g$-rigidity \cite[Conjecture 4.13]{cruickshank2023identifiability}.  \Cref{thm: main theorem} can also be seen as being similar in spirit to the results in \cite{streinu2011natural}, providing a general way to give realisations of certain sparsity matroids. 

We discuss briefly the applications of this theorem. We show that the Geiringer-Laman theorem can be deduced from the main theorem, and one can also immediately deduce similar results for spherical and hyperbolic rigidity, a fact usually explained through coning \cite{whiteley1983cones, schulze2012coning}. We also show that \Cref{thm: main theorem} applies to parallel rigidity, which was previously characterised in \cite{Whiteley1996, develin2007rigidity}.

The proof of \Cref{thm: main theorem} is an inductive proof. We use a result by Frank and Szegö \cite{frank2003constructive} that provides an inductive characterisation of sparse graphs. Szegö \cite{SZEGO20061211} conjectured that similar results should hold for other values. We use novel methods to show that the extensions preserve independence after suitably defining independence in the graph-of-groups setting. We remark that understanding $2$-extensions for $3$-dimensional rigidity is an open problem, the resolution of which would represent significant progress towards $3$-dimensional rigidity \cite{Cruickshank2014, 2022}.

The main tool that we use is that the infinitesimal motions can be interpreted as the $0$th cohomology group of cellular sheaves on graphs. Cellular sheaves \cite{currysheaves, Hansen2019} are variants of classical sheaves, and they can be used to study and address local-to-global type questions. 

Modelling the infinitesimal motions as a sheaf-cohomology group, we shall use long exact sequences to study the relation of infinitesimal motions to other vector spaces. In fact, it is through the use of a long exact sequence that we can prove the inductive constructions preserve rigidity for a general class of sheaves that we call \textit{motion sheaves}. Naturally defined classes of motion sheaves carry the structure of a real algebraic variety, being a product of Grassmannians, and we will consider how the dimensions of these cohomology groups can vary within the variety. We will establish that the dimensions of the cohomology groups exhibit upper semi-continuity with respect to the Zariski topology. For the inductive constructions, we need to consider a class of sheaves related to the motion sheaves that we call \textit{associated sheaves}, and their generic behaviour is characterised in \Cref{thm: main_thm_ass_sheaves}. These associated sheaves may be of independent interest.

To apply the results to graph-of-groups realisations, one needs to make a geometric analysis of which motion sheaves arise from graph-of-groups realisations. Within the mentioned varieties, the sheaves coming from graph-of-groups realisations will cut out semi-algebraic subsets. In the case where stabilisers are $1$-dimensional, and in the case of parallel redrawings, these semi-algebraic subsets are full-dimensional, meaning that in these cases, the graph-of-groups will behave like generic motion sheaves. The main theorem, \Cref{thm: main theorem}, establishes the generic behaviour of motion sheaves in these cases. Classes of motion sheaves that come from graph-of-groups do not need to exhibit generic behaviour, and this is, for example, the case for $3$-dimensional rigidity.

The duals of cellular sheaves, cosheaves, have been exploited to explain graphic statics from a homological perspective. This can be read in the PhD thesis of Cooperband \cite{cooperband2024cellular}, and in the work by Cooperband, Ghrist, Hansen, Lopez, McRobie, Millar, and Schulze upon which the thesis expands \cite{cooperband2023towards, cooperband2024cellular,cooperband2024equivariant,cooperband2024homology,cooperband2025unified}. The treatment of graphic statics using these techniques simplifies and clarifies many constructions and also generalises them cleanly to new contexts. However, homological methods had already been applied to the study of rigidity and liftings of scenes by Crapo, Whiteley, and Tay \cite{Crapo-projective-configurations, crapo-geometric-homology, Whiteley1998-geometric-homology, Crapo2004StressesI3, TAY2000102} (see also Part III of the survey article \cite[part III]{Whiteley1996}). The model utilised in the current work is most similar to the one used in \cite{cooperband2025unified}; however, the type of questions we consider is quite different, and the model is more general. Much of the previous work has also focused on higher homology groups, which were associated with structures such as simplicial complexes or graph embeddings on surfaces. In the current work, we only consider sheaves on graphs, though similar higher-rank structures are a natural and promising avenue for further work.

\section{Motion sheaves}\label{sec: motion sheaves}

\subsection{Background on cellular sheaves and their cohomology}\label{sec: Background - sheaves}
We briefly give the necessary background on cellular sheaves. See \cite{currysheaves, Hansen2019} for a more comprehensive treatment of cellular sheaves. We will only be interested in sheaves on graphs. 
\begin{definition}
A sheaf $\mathcal{F}$ on a graph $\Gamma=(V,E)$ consists of a finite dimensional vector space $\mathcal{F}(v)$ for all $v\in V$, as well as a finite dimensional vector space $\mathcal{F}(e)$ for all $e\in E$, together with linear maps (called restriction maps)
\begin{align*}
r^{v}_{e}: \mathcal{F}(v) &\rightarrow \mathcal{F}(e)
\end{align*}
for each vertex-edge inclusion $v\in e$. We will denote the class of all sheaves on a given graph by $\textup{Sh}(\Gamma)$. 
\end{definition}
One defines 
\begin{align*}
    \mathcal{F}(E) &= \bigoplus_{e\in E} \mathcal{F}(e),\\
    \mathcal{F}(V) &= \bigoplus_{v\in V} \mathcal{F}(v),
\end{align*}
and the map
\begin{equation}
d : \mathcal{F}(V) \rightarrow \mathcal{F}(E): (w_{v_1}, \cdots, w_{v_n}) \mapsto (\sum_{v \sim e} [v:e]r^{v}_{e}(w_v))_{e\in E},
\end{equation}
where $[v:e] =\pm 1$, oriented in such a way that for every edge $e=vu$, one has $[v:e]= -[u:e]$.

One then defines the cohomology groups
\begin{align*}
    H^{0}(\Gamma, \mathcal{F}) &= \ker(d)\\
    H^{1}(\Gamma, \mathcal{F}) &= \text{coker}(d) = \frac{\mathcal{F}(E)}{\textup{im}(d)}.
\end{align*}
It is clear that this is independent of the given orientation on $\Gamma$. Elements of $H^{0}(\Gamma, \mathcal{F})$ are called sections.

Let $\mathcal{F}, \mathcal{G}$ be sheaves on a graph $\Gamma$. A morphism of sheaves $\mathcal{F}\rightarrow \mathcal{G}$ consists of a collection of $\vert V\vert +\vert E\vert$ linear maps:
\begin{align*}
   \alpha_v:& \mathcal{F}(v)\rightarrow \mathcal{G}(v)\\
   \alpha_e:& \mathcal{F}(e)\rightarrow \mathcal{G}(e),
\end{align*}

such that for any vertex-edge inclusion, the following diagram commutes:
\begin{center}
\begin{tikzcd}
\mathcal{F}(v) \arrow[r, "\alpha_v"] \arrow[d, "r^{v}_e"] & \mathcal{G}(v) \arrow[d, "r^{v}_e"] \\
\mathcal{F}(e) \arrow[r, "\alpha_e"]                      & \mathcal{G}(e)                     
\end{tikzcd}
\end{center}

A morphism of sheaves is said to be injective and surjective, respectively, if all the $\alpha_v$ and $\alpha_e$ are. Given a morphism of sheaves $\alpha: \mathcal{F}\rightarrow \mathcal{G}$, one can also define the sheaves $\ker(\alpha)$ and $\text{im}(\alpha)$, which are the kernel and image of a morphism of sheaves. They are defined by:
\begin{align*}
    \ker(\alpha)(x) &:= \ker(\alpha_x) \text{ for all  } x\in V\cup E,\\
    \im(\alpha)(x) &:= \im(\alpha_x) \text{ for all  } x\in V\cup E,
\end{align*}
and $r^{v}_e$ is given by the restriction map in $\mathcal{F}$ or $\mathcal{G}$ respectively. One has that $\alpha: \mathcal{F} \rightarrow \mathcal{G}$ is surjective if $\im(\alpha) = \mathcal{G}$.

A map of cellular sheaves induces a map of sections $\alpha_{\Gamma}: H^{0}(\Gamma,\mathcal{F}) \rightarrow H^{0}(\Gamma,\mathcal{G}): \sum_{v\in V} w_v \mapsto \sum_{v\in V} \alpha_{v}(w_v)$ as well as maps $H^{1}(\Gamma,\mathcal{F}) \rightarrow H^{1}(\Gamma,\mathcal{G}): [\sum_{e\in E} w_{e}]\mapsto [\sum_{e\in E} \alpha_{e}(w_{e})]$. However, a key point about sheaves is that $\alpha_{\Gamma}$ need not be surjective if $\alpha$ is. As for modules, one says that a sequence of morphisms 
\begin{equation*}
    \mathcal{F} \xrightarrow{\alpha} \mathcal{G}\xrightarrow {\beta}\mathcal{H} 
\end{equation*}
is exact if $\ker(\beta)=\text{im}(\alpha)$.

One sheaf that will play an important role in what follows is the constant sheaf with values in a vector space $\mathbb{V}$.
\begin{definition}
    Let $\mathbb{V}$ be a vector space, and let $\Gamma=(V,E)$ be a graph. We denote by $\underline{\mathbb{V}}$ the sheaf with $\underline{\mathbb{V}}(x) = \mathbb{V}$ for all $x\in V\cup E$, and the restriction maps are all given by the identity. 
\end{definition}

We will use the following fact throughout the article. Any exact sequence of sheaves on a graph $\Gamma:$
\begin{equation*}
    0\rightarrow \mathcal{F} \rightarrow \mathcal{G}\rightarrow \mathcal{H} \rightarrow 0
\end{equation*}
gives rise to the following exact sequence in cohomology:
\begin{equation*}
    0 \rightarrow H^0(\Gamma, \mathcal{F}) \rightarrow H^0(\Gamma, \mathcal{G})\rightarrow H^0(\Gamma, \mathcal{H})\rightarrow H^1(\Gamma, \mathcal{F})\rightarrow H^1(\Gamma, \mathcal{G})\rightarrow H^1(\Gamma, \mathcal{H})\rightarrow 0.
\end{equation*} 
The terms $H^{k}(\Gamma, \cdot)$ are $0$ for $k\geq 2$ since $\Gamma$ is a graph.
\subsubsection{Restricting to a sub-graph}
One always has naturally defined morphisms of sheaves whenever one has a subgraph $\Gamma' \subseteq \Gamma$. Let $\mathcal{F}$ be a sheaf on a graph $\Gamma$. For any subgraph $\Gamma'\subset \Gamma$ (not necessarily induced), we can consider $\mathcal{F}^{(1)}_{\vert \Gamma'}$ as a sheaf on $\Gamma'$, which is defined as
\begin{equation}
\mathcal{F}^{(1)}_{\vert \Gamma'}(x) = \mathcal{F}(x)
\end{equation}
with the restriction maps defined in the same way as in $\mathcal{F}$. Alternatively, one defines the restriction $\mathcal{F}^{(2)}_{\vert \Gamma'}$ as a sheaf on $\Gamma$, by setting
\begin{align*}
\mathcal{F}_{\vert \Gamma'}^{(2)}(x) &= \mathcal{F}(x) \text{ if $x\in V(\Gamma') \cup  E(\Gamma').$}
\\
\mathcal{F}_{\vert \Gamma'}^{(2)}(x) &= 0 \text{ otherwise}.
\end{align*}
The map $\mathcal{F}^{(2)}_{\vert \Gamma'}(v) \rightarrow \mathcal{F}^{(2)}_{\vert \Gamma'}(e)$ is given by the map coming from $\mathcal{F}$, whenever $e\in E(\Gamma')$, and the zero map otherwise. One may check that 
\begin{align*}
H^{0}(\Gamma', \mathcal{F}^{(1)}_{\vert \Gamma'}) &\cong H^{0}(\Gamma, \mathcal{F}^{(2)}_{\vert \Gamma'}). \\
H^{1}(\Gamma', \mathcal{F}^{(1)}_{\vert \Gamma'}) &\cong H^{1}(\Gamma, \mathcal{F}^{(2)}_{\vert \Gamma'}).
\end{align*}  

One has a surjective map $\alpha: \mathcal{F}\rightarrow \mathcal{F}_{\vert \Gamma}^{(2)}$, given by $\alpha_{x} = \textup{id}$ if $x\in \Gamma'$, and $\alpha_{x} = 0$ if $x\notin \Gamma'$. One thus also has a short exact sequence:
\begin{equation}\label{eq: les_restriction}
0 \rightarrow \ker(\alpha) \rightarrow \mathcal{F}\rightarrow \mathcal{F}_{\vert \Gamma}^{(2)} \rightarrow 0.
\end{equation}
hence, one has, by the long exact sequence in cohomology (since $H^{2}=0$ for graphs):
\begin{lemma} \label{lem: Stresses extend}
The map $H^{1}(\Gamma, \mathcal{F}) \rightarrow H^{1}(\Gamma, \mathcal{F}\vert_{\Gamma'}^{(2)})$ induced by \eqref{eq: les_restriction} is surjective. In particular, if $H^{1}(\Gamma, \mathcal{F})= 0$, then $ H^{1}(\Gamma, \mathcal{F}\vert_{\Gamma'}^{(2)})=0$.
\end{lemma}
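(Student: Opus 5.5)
The plan is to apply the long exact sequence in cohomology to the short exact sequence \eqref{eq: les_restriction}. First I will confirm that \eqref{eq: les_restriction} really is a short exact sequence of sheaves. The map $\alpha$ has to be a morphism of sheaves, so for each incidence $v\in e$ I check the square $\alpha_e\circ r^{v}_e = r^{v}_e\circ\alpha_v$.

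\begin{itemize}
\item If $e\in E(\Gamma')$, then both endpoints of $e$ lie in $\Gamma'$, since $\Gamma'$ is a subgraph. Both $\alpha$'s are then the identity, and the restriction maps agree.
\item If $e\notin E(\Gamma')$, then the target restriction map in $\mathcal{F}^{(2)}_{\vert\Gamma'}$ is zero. Also $\alpha_e=0$, so both compositions vanish.
\end{itemize}

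Surjectivity of $\alpha$ holds pointwise, because $\alpha_x$ is the identity onto $\mathcal{F}(x)$ when $x\in\Gamma'$ and the target is $0$ otherwise. Finally, $\ker(\alpha)$ is a subsheaf with the inherited restriction maps, so the sequence $0\to\ker(\alpha)\to\mathcal{F}\to\mathcal{F}^{(2)}_{\vert\Gamma'}\to 0$ is exact at every cell.

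Next I invoke the long exact sequence stated earlier for short exact sequences of sheaves on a graph. Its tail reads
\[
H^1(\Gamma,\ker(\alpha))\rightarrow H^1(\Gamma,\mathcal{F})\rightarrow H^1(\Gamma,\mathcal{F}^{(2)}_{\vert\Gamma'})\rightarrow 0.
\]
The final $0$ is there because $H^2$ vanishes on a graph. Exactness at $H^1(\Gamma,\mathcal{F}^{(2)}_{\vert\Gamma'})$ says precisely that the induced map $H^1(\Gamma,\mathcal{F})\to H^1(\Gamma,\mathcal{F}^{(2)}_{\vert\Gamma'})$ is surjective.

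A direct check is also available, if one wants to avoid the long exact sequence. Every class in $\mathcal{F}^{(2)}_{\vert\Gamma'}(E)/\mathrm{im}(d)$ is represented by some edge vector supported on $E(\Gamma')$. That vector lifts to $\mathcal{F}(E)$ by extending it by zero. The induced map on $H^1$ is just the projection on cochains, so this lift maps onto the given class.

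The \emph{in particular} clause follows immediately. If $H^1(\Gamma,\mathcal{F})=0$, then the zero space surjects onto $H^1(\Gamma,\mathcal{F}^{(2)}_{\vert\Gamma'})$, so the latter is zero. Combined with the stated isomorphism $H^1(\Gamma',\mathcal{F}^{(1)}_{\vert\Gamma'})\cong H^1(\Gamma,\mathcal{F}^{(2)}_{\vert\Gamma'})$, this can also be read on $\Gamma'$ itself.

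The only point needing care is the compatibility check of $\alpha$ with the restriction maps. In particular, at an edge outside $\Gamma'$ one of whose endpoints lies in $\Gamma'$, the restriction map in $\mathcal{F}^{(2)}_{\vert\Gamma'}$ must be defined to be zero for the square to commute. The paper's definition does exactly this, so I expect no real obstacle.
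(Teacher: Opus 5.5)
Your proof is correct and matches the paper's argument: the paper also gets the lemma straight from the long exact sequence of the short exact sequence \eqref{eq: les_restriction}, where the final $0$ (from $H^{2}=0$ on a graph) gives surjectivity of $H^{1}(\Gamma, \mathcal{F}) \rightarrow H^{1}(\Gamma, \mathcal{F}\vert_{\Gamma'}^{(2)})$. Your check that $\alpha$ is a morphism of sheaves and your cochain-level alternative are both sound. In that alternative, $\mathcal{F}(E)\rightarrow \mathcal{F}\vert_{\Gamma'}^{(2)}(E)$ is a coordinate projection, hence surjective on cokernels; these details just spell out what the paper leaves implicit.
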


We will use restrictions of sheaves $\mathcal{F}_{\Gamma'}$ throughout the paper. Since we will only be interested in the cohomology groups, we will not specify whether $\mathcal{F}_{\Gamma'}$ is a sheaf on $\Gamma'$ or on $\Gamma$.

\subsection{Definitions and statement of main theorem.}

The motivation for the following definition is that one can describe the infinitesimal motions of graph-of-groups realisations as a $0$th cohomology group of a sheaf on the incidence graph, as we will see in \Cref{sec: applications to gogs}. We recall that the incidence graph of a hypergraph $\Gamma=(V, E)$ is the graph with vertex set $X$, where $X= V\cup E$, and with edge set $I = \{\{v,e\} \subseteq V\cup E ~ |~ v \in e \}$. This graph is denoted by $I(\Gamma) = (X, I)$, and we denote the edges of this graph by $v\sim e$ instead of $ve$. In this article, we denote that $S$ is a vector subspace of a vector space $\mathbb{V}$ by $S\leq \mathbb{V}$.

\begin{definition}
    Let $\Gamma=(V, E)$ be a hypergraph, and let $\mathbb{V}$ be a finite dimensional real vector space. For any choices of subspaces $S_v$ for all $v\in V$, we can define a sheaf $\mathcal{S}$ on $I(\Gamma)=(X, I)$ by
    \begin{align*}
        \mathcal{S}(v) &= S_v \leq \mathbb{V} \text{ for } v \in X \text{ representing vertices of $\Gamma$},\\
        \mathcal{S}(e) &= \bigcap_{v\in e} \mathcal{S}(v) \leq \mathbb{V} \text{ for } e\in X \text{ representing edges of $\Gamma$},\\
        \mathcal{S}(e \sim v) &= \mathcal{S}(v).
    \end{align*}
    The maps 
    \begin{equation*}
        \begin{array}{cc}
            r^{v}_{v\sim e}:  \mathcal{S}(v) \rightarrow \mathcal{S}(v \sim e), & 
            r^{e}_{v\sim e}:\mathcal{S}(e) \rightarrow \mathcal{S}(v \sim e) 
        \end{array}
    \end{equation*}
    are given by the identity and inclusion. We call such sheaves subspace sheaves. We define a motion sheaf to be a sheaf of the form 
        \begin{align*}
        \mathcal{M}(v) &= \mathbb{V}/\mathcal{S}(v)  \text{ for } v \in V(I(\Gamma)) \text{ representing vertices of $\Gamma$},\\
        \mathcal{M}(e) &= \mathbb{V}/\mathcal{S}(e) \text{ for } e\in V(I(\Gamma)) \text{ representing edges of $\Gamma$},\\
        \mathcal{M}(e \sim v) &=  \mathbb{V}/\mathcal{S}(v).
    \end{align*}
    The maps 
    \begin{equation*}
        \begin{array}{cc}
            r^{v}_{v\sim e}: \mathcal{M}(v) \rightarrow \mathcal{M}(v \sim e) & 
            r^{e}_{v\sim e}: \mathcal{M}(e) \rightarrow \mathcal{M}(v \sim e) 
        \end{array}
    \end{equation*}
    are given by the identity and the reduction modulo $\mathcal{S}(v)$, which is well defined since $\mathcal{S}(e) \subseteq \mathcal{S}(v)$.
\end{definition}
Given a subspace sheaf $\mathcal{S}$ and a motion sheaf $\mathcal{M}$ defined using the same subspaces $S_{v}\leq \mathbb{V}$ for $v\in V$, one has that $\mathcal{M} = \underline{\mathbb{V}}/\mathcal{S}$, where $\underline{\mathbb{V}}$ is a constant sheaf. More precisely, one has a short exact sequence of sheaves
    \begin{equation}\label{ses}
        0\rightarrow \mathcal{S} \xrightarrow{i}   \underline{\mathbb{V}} \xrightarrow{p} \mathcal{M} \rightarrow 0.
    \end{equation}
    The map $i$ is defined using the inclusions $\mathcal{S}(x) \rightarrow \mathbb{V}$ for all $x\in X$ and $x\in I$, and $p$ is defined using the projection map $\mathbb{V} \rightarrow  \mathbb{V}/\mathcal{S}(x)$ for all $x\in X$ and $x\in I$. To somewhat ease the notation, we denote the induced map $p_{I(\Gamma)}: H^{0}(I(\Gamma), \underline{\mathbb{V}}) \rightarrow H^{0}(I(\Gamma), \mathcal{M})$ by $p_{\Gamma}$ instead.

The sets of subspace and motion sheaves can be given the structure of a real algebraic variety. We denote by $\textup{Gr}(s, \mathbb{V})$ the Grassmannian of $s$-dimensional subspaces $\mathbb{V}$, considered as a real algebraic variety. We will consider only the real points on real algebraic varieties, i.e., we do not consider complex points. Let $s,n\in \mathbb{N}$, with $s < n$ and let $\Gamma$ be a hypergraph. Fix a real vector space $\mathbb{V}$ of dimension $n$ and define
\begin{align*}   
S_{s, n}(\Gamma):=\{\mathcal{S} \in \textup{Sh}(I(\Gamma)) ~:~ &\mathcal{S}(v)\in \textup{Gr}(s,\mathbb{V}), ~ \mathcal{S}(e)  = \cap_{v\in e} \mathcal{S}(v) \\
&\mathcal{S}(v\sim e) = \mathcal{S}(v), \text{ with $r^{x}_{v\sim e}$ being the inclusion for $x=v$ or $x=e$}\}.
\end{align*}

We also introduce a notation for the set of all motion sheaves associated with sheaves in $S_{s,n}$:
$$
M_{s, n}(\Gamma):=\{\mathcal{M} \in \textup{Sh}(I(\Gamma))~:~ \mathcal{M}= \underline{\mathbb{V}}/\mathcal{S}, \mathcal{S} \in S_{s,n}  \},$$

We can naturally identify both $S_{s, n}$ and $M_{s,n}$ with the real algebraic variety $\textup{Gr}(s, \mathbb{V})^{|V|}$, and we equip the sets $S_{s,n}$ and $M_{s,n}$ with this structure. The necessary condition that can be derived is a sparsity condition on a hypergraph. See \cite{MR2552675} for more information on combinatorial aspects of such graphs. 

\begin{definition}\label{def: sparse}
    Let $d, \ell \in \mathbb{N}$ with $\ell \leq dr-1$. An $r$-uniform hypergraph $\Gamma=(V, E)$ is said to be $(d, \ell)$-sparse if for all subsets $V' \subseteq V$, with $|V'|\geq r$, one has $|E[V']| \leq d|V'| - \ell,$ where $E[V']$ is the set of hyperedges supported on $V'$. The hypergraph $\Gamma$ is said to be $(d, \ell)$-tight if it is $(d, \ell)$-sparse and additionally $|E|= d|V| - \ell$. 
\end{definition}

For any graph $\Gamma= (V, E)$ and $a \in \mathbb{N}$, we denote by $a \Gamma$ the multigraph with vertex set $V$ and edge set $\biguplus_{i=1}^{a} E$ (i.e., there are $a$ copies of each edge). We are now ready to state the main theorem. 
\begin{theorem}\label{thm: main theorem}
    Let $\Gamma$ be a graph, and $n\geq 3$. There is a nonempty Zariski open subset of $M_{1, n}(\Gamma)$ such that $H^{1}(I(\Gamma), \mathcal{M}) = 0$ if and only if $(n-2)\Gamma$ is $(n-1, n)$-sparse. 
\end{theorem}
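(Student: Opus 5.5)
The plan has three parts: necessity by a dimension count, Zariski-openness of the locus $H^{1}=0$ by semicontinuity, and nonemptiness of that locus by induction on a Frank–Szegő construction. First, the count. For $\mathcal{M}\in M_{1,n}(\Gamma)$ and an edge $e=uv$ of $\Gamma$, generically $\mathcal{S}(e)=S_u\cap S_v=0$, so $\mathcal{M}(e)=\mathbb{V}$. The stalks then have dimensions $\dim\mathcal{M}(v)=n-1$, $\dim\mathcal{M}(e)=n$ and $\dim\mathcal{M}(v\sim e)=n-1$. Since $I(\Gamma)$ has $|V|+|E|$ vertices and $2|E|$ edges, the Euler characteristic is
\begin{equation*}
\dim H^{0}(I(\Gamma),\mathcal{M})-\dim H^{1}(I(\Gamma),\mathcal{M})=(n-1)|V|-(n-2)|E| .
\end{equation*}
The long exact sequence of \eqref{ses} contains $0\to H^{0}(\mathcal{S})\to H^{0}(\underline{\mathbb{V}})\xrightarrow{p_\Gamma} H^{0}(\mathcal{M})$. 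On a connected piece with at least two distinct lines, $H^{0}(\mathcal{S})=\bigcap_v S_v=0$, so $p_\Gamma$ embeds $\mathbb{V}$ and $\dim H^{0}\ge n$.

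For necessity, suppose $H^{1}=0$ and take any subgraph $\Gamma'$ on $|V'|\ge 2$ vertices. By \Cref{lem: Stresses extend}, $H^{1}$ also vanishes for the restriction to $I(\Gamma')$. Combining this with the Euler characteristic count gives $n\le (n-1)|V'|-(n-2)|E(\Gamma')|$, which is exactly $(n-1,n)$-sparsity of $(n-2)\Gamma$. Some care is needed when $S_u=S_v$ on an edge, where $\mathcal{M}(e)$ drops dimension. I would handle this by noting that the sparsity direction only needs one point of the open set: restrict to the Zariski open set of pairwise distinct lines, or observe that coincidences only make $\dim \mathcal{M}(e)$ smaller, which worsens surjectivity of $d$.

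Second, openness. Over local affine charts of $\textup{Gr}(1,\mathbb{V})^{|V|}$, choose bases of $\mathcal{M}(x)$ depending polynomially on the lines, on the open set where all $S_u\cap S_v=0$. Then $H^{1}=0$ says the coboundary matrix $d$ has full row rank $2(n-1)|E|$, which is the nonvanishing of some maximal minor. Hence the locus is Zariski open, and this is the upper semicontinuity announced in the introduction. It remains to show the locus is nonempty whenever $(n-2)\Gamma$ is $(n-1,n)$-sparse, and the natural route is induction.

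Third, nonemptiness. Since edges of $\Gamma$ come in bundles of $n-2$ parallel copies, I would pass to a finer class where each edge of a multigraph carries its own multiplicity. These are the associated sheaves, in which an edge of multiplicity $a$ has a stalk contributing rank $a$ rather than $n-2$. I would then prove a generic independence statement of the form of \Cref{thm: main_thm_ass_sheaves} for all $(n-1,n)$-sparse multigraphs. A sparse multigraph extends to a tight one, and $H^{1}=0$ passes to subgraphs by \Cref{lem: Stresses extend}, so it suffices to treat tight graphs. For these, the Frank–Szegő constructive characterisation of $(k,k+1)$-tight graphs with $k=n-1$ gives a sequence of $0$-extensions and $1$-extensions (edge splits) starting from a base graph. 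The base case is checked directly.

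A $0$-extension adds a vertex with a generic new line. Using the short exact sequence that splits off its star, $H^{1}$ of the new sheaf is a quotient-extension of $H^{1}$ of the old sheaf and of the local star sheaf, and the latter vanishes by a direct linear-algebra check. The $1$-extension is the main obstacle. Here an edge $uw$ is deleted and a new vertex $z$ is joined to $u$, $w$ and a third vertex. I would first specialise $S_z$ to lie in the plane $S_u+S_w$, where the new configuration contains the old edge's constraint. Using the long exact sequence in cohomology, I would show $H^{1}$ vanishes at this special point, and then use openness from the second part to perturb to a generic point. The algebraic condition needed at the special position is that the image of the old edge stalk is recovered by the two new edge stalks through $z$. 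Verifying this for arbitrary multiplicity splits, not just multiplicity one, is the step I expect to require the most work.
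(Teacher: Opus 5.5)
\textbf{Gap: the induction only handles $0$- and $1$-extensions, but general $k$-extensions are needed.}

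Your first two parts are sound and match the paper. Necessity is \Cref{cor: necessary_concrete}: your Euler count on the open set of pairwise distinct lines, with a disconnected $\Gamma'$ handled by $\sum_C \codim \bigcap_{v\in C} S_v \geq n$. Openness is \Cref{thm: Genericity_generalised_frameworks.}.

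The gap is in the third part, where you say the Frank--Szeg\H{o} theorem builds $(n-1,n)$-tight multigraphs from $0$- and $1$-extensions only. That holds only for $n=3$, the Laman case. In an $(n-1,n)$-tight multigraph the degree sum is $2(n-1)|V|-2n$, so the minimum degree can exceed $n$. Creating a vertex of degree $(n-1)+k$ is an $(n-1)$-dimensional $k$-extension, so $k$-extensions with $k$ up to $n-2$ are genuinely needed. For instance, the complement of an $8$-cycle is a simple, $5$-regular, $(3,4)$-tight graph, so for $n=4$ its last construction step must be a $2$-extension. The intermediate multigraphs in the induction are not of the form $(n-2)\Gamma$, so you cannot avoid this by reducing $\Gamma$ cleverly. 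Note also that in the $(n-1)$-dimensional count a $1$-extension attaches the new vertex by $n$ edges, not three.

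\textbf{Why your specialisation does not extend, and the idea that replaces it.}

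For the $1$-extension you place $S_z$ inside the plane $S_u+S_w$; this is the collinearity condition of \Cref{thm: k-extension}. For a $k$-extension this would require $S_z \subseteq \bigcap_j (S_{u_j}+S_{u_j'})$. For generic lines in $\mathbb{R}^n$ with $n\geq 4$, two such planes meet only in $0$, so this fails.

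The missing idea is \Cref{Extension-for-sheaves}. In $Z_{1,n}$ every multigraph edge carries a single linear form $\alpha_e$ annihilating $S_u+S_w$. The new line therefore only has to lie in the hyperplane $\ker\alpha_e$, not in the plane $S_u+S_w$. The construction is:
\begin{itemize}
\item both new edges $f_{2j-1}, f_{2j}$ reuse the form $\alpha_{e_j}$ of the deleted edge;
\item the $n-1-k$ extra edges receive further forms;
\item the new line is $S_{v_*}=\bigcap_{i=1}^{n-1}\ker\alpha_i$, which is a line exactly when these $n-1$ forms are linearly independent.
\end{itemize}
Surjectivity of $H^1(\Gamma',\mathcal{S}')\to H^1(\Gamma',\underline{\mathbb{V}})$ then follows from explicit cochains for the two local cases, together with a well-defined map into the quotient by their span.

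\textbf{Further ingredients you would still need.}

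You must show that this linear independence can be achieved inside the open set given by the induction hypothesis. The paper gets this from the irreducibility of $Z_{1,n}$, which is a vector bundle over $M_{1,n}$, and from the bound of $n-2$ parallel edges. Your bundled-multiplicity version of the associated sheaves would need corresponding irreducibility and general-position statements. It would also need the fibrewise comparison $h^1(\mathcal{F})\geq h^1(\mathcal{M})$, which is what lets you pass back down to $M_{1,n}$.
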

In other words, this means that for generic points of $M_{1, n}(\Gamma)$, the corresponding motion sheaves will satisfy $H^{1}(I(\Gamma), \mathcal{M}) = 0$ if and only if $(n-2)\Gamma$ is $(n-1,n)$-sparse. In terms more familiar to rigidity theorists, $H^{1}(I(\Gamma), \mathcal{M})=0$ is the notion corresponding to independence.

\subsection{Basic observations}
\begin{theorem}\label{thm: Maxwell-rule}
Let $\Gamma=(V, E)$ be a hypergraph, and let $\mathcal{M}$ be a motion sheaf on $I(\Gamma)=(X, I)$. One has
\begin{equation*}
\dim(H^{0}(I(\Gamma), \mathcal{M})) - \dim(H^{1}(I(\Gamma),  \mathcal{M}))= \sum_{x\in X} \dim\left(\mathcal{M}(x)\right) - \sum_{v\sim e\in I} \dim(\mathcal{M}(v)).
\end{equation*}
\end{theorem}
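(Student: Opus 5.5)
This is the Euler characteristic identity for the two-term cochain complex that computes sheaf cohomology on the graph $I(\Gamma)=(X,I)$. Write the coboundary as
\[
d:\bigoplus_{x\in X}\mathcal{M}(x)\rightarrow \bigoplus_{v\sim e\in I}\mathcal{M}(v\sim e).
\]
Here the vertices of the graph $I(\Gamma)$ are the elements $x\in X = V\cup E$, and its edges are the incidences $v\sim e\in I$. By definition, $H^{0}(I(\Gamma),\mathcal{M})=\ker(d)$ and $H^{1}(I(\Gamma),\mathcal{M})=\textup{coker}(d)$. All the spaces involved are finite dimensional, since each is a quotient of the finite-dimensional space $\mathbb{V}$.

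The key step is rank--nullity applied to $d$. It gives
\[
\dim\ker(d) = \sum_{x\in X}\dim\mathcal{M}(x) - \textup{rank}(d).
\]
The definition of the cokernel gives
\[
\dim\textup{coker}(d) = \sum_{v\sim e\in I}\dim\mathcal{M}(v\sim e) - \textup{rank}(d).
\]
Subtracting the second identity from the first cancels $\textup{rank}(d)$. This leaves the alternating sum
\[
\sum_{x\in X}\dim\mathcal{M}(x)-\sum_{v\sim e}\dim\mathcal{M}(v\sim e).
\]

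The final step is to use the definition of a motion sheaf, namely $\mathcal{M}(v\sim e)=\mathbb{V}/\mathcal{S}(v)=\mathcal{M}(v)$. Hence $\dim\mathcal{M}(v\sim e)=\dim\mathcal{M}(v)$, which turns the second sum into the stated one.

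There is no real obstacle here. The only points needing care are bookkeeping ones. First, the vertex set of the incidence graph is $X$, which includes the hyperedges, and the edge set is $I$. Second, the edge stalks $\mathcal{M}(v\sim e)$ are identified with the vertex-side quotient $\mathbb{V}/\mathcal{S}(v)$, not with $\mathcal{M}(e)=\mathbb{V}/\mathcal{S}(e)$.
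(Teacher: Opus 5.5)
Your proof is correct and follows the paper's argument: apply rank--nullity to the coboundary $d$, cancel $\textup{rank}(d)$ against the cokernel dimension, and then use $\mathcal{M}(v\sim e)=\mathbb{V}/\mathcal{S}(v)=\mathcal{M}(v)$ to rewrite the edge sum. Your bookkeeping remarks about the vertex set $X$ and the edge stalks simply spell out the paper's final ``counting the dimensions'' step.
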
 
\begin{proof}
By the rank-nullity theorem, applied to the boundary operator, one has
\begin{equation*}
 \dim( \text{ker}(d)) + \dim( \text{im}(d))= \dim(\mathcal{M}(X)).
 \end{equation*}
 Hence, 
 \begin{equation*}
 \dim( \text{ker}(d)) - \dim( \text{coker}(d))= \dim(\mathcal{M}(X)) - \dim(\mathcal{M}(I)).
 \end{equation*}
Counting the dimensions of $\mathcal{M}(X)$ and $\mathcal{M}(I)$ yields the result. 
\end{proof}

\Cref{thm: Maxwell-rule} is a general version of the Maxwell rule and it indicates that $H^{1}(I(\Gamma), \mathcal{M})$ plays the same role as stresses in the classical setting. From the short exact sequence \eqref{ses}, one has the following long exact sequence:
\begin{equation}\label{les}
    \begin{array}{cl}
    0 \rightarrow & H^0(I(\Gamma), \mathcal{S}) \rightarrow H^0(I(\Gamma), \underline{\mathbb{V}}) \xrightarrow{p_{\Gamma}} H^0(I(\Gamma), \mathcal{M}) \\[10pt]
    &\xrightarrow{\delta} H^1(I(\Gamma), \mathcal{S}) \rightarrow H^1(I(\Gamma), \underline{\mathbb{V}}) \rightarrow H^1(I(\Gamma), \mathcal{M}) \rightarrow 0.
\end{array}
\end{equation}
We call elements $(W_{x})_{x\in V\cup E} = H^{0}(I(\Gamma), \mathcal{M})$ such that there exists a $W\in \mathbb{V}$ with
\begin{equation*}
    W_x = W \textup{ mod } \mathcal{S}(x) \textup{ for all } x\in V\cup E
\end{equation*}
trivial infinitesimal motions. An easy computation shows that 
\begin{align*}
    H^{0}(I(\Gamma), \mathcal{S}) & \cong \bigoplus_{ C \in K(\Gamma) } \left(\bigcap_{v \in V(C)} \mathcal{S}(v)\right),
\end{align*}
where $K(\Gamma)$ denotes the set of connected components of $\Gamma$. By a connected component, we mean a maximal vertex set $V'\subseteq V$ such that $I(\Gamma[V'])$ is connected. It is equally easy to verify that 
\begin{equation*}
   H^{0}(I(\Gamma), \underline{\mathbb{V}}) \cong \bigoplus_{ C \in K(\Gamma) } \mathbb{V}.
\end{equation*}
In the case that $\Gamma$ is connected, the image of $p_{\Gamma}: H^0(I(\Gamma), \underline{\mathbb{V}}) \rightarrow H^0(I(\Gamma), \mathcal{M})$ gives the trivial infinitesimal motions.

\begin{definition}
    Let $\Gamma$ be a hypergraph, and let $\mathcal{M}$ be a motion sheaf. We say that $\mathcal{M}$ is 
    \begin{itemize}
        \item \textit{independent} if $H^{1}(I(\Gamma), \mathcal{M}) = 0$,
        \item \textit{rigid} if $\Gamma$ is connected, and $p_{\Gamma}(H^{0}(I(\Gamma), \underline{\mathbb{V}})) = H^{0}(I(\Gamma), \mathcal{M})$,
        \item \textit{minimally rigid} if $\mathcal{M}$ is independent and rigid.
    \end{itemize}
\end{definition}
For motion sheaves of graph-of-groups realisations coming from bar-joint frameworks, these will notions correspond to independence, infinitesimal rigidity, and minimal infinitesimal rigidity respectively. 

\begin{theorem}\label{thm: Necessary condition}
Let $\Gamma=(V,E)$ be a hypergraph, and let $\mathcal{M}$ be a motion sheaf on $I(\Gamma)$ such that $\bigcap_{v\in V} \mathcal{S}(v) = 0$. If $\mathcal{M}$ is independent, then for any sub-hypergraph $\Gamma' \subseteq \Gamma$ with $\bigcap_{x\in V(\Gamma')} \mathcal{S}(x) = 0$, one has 
\begin{equation}\label{ineq}
\sum_{e \in E(\Gamma')}\left( \sum_{v\in e} \dim\left(\mathcal{M}(v)\right) - \dim\left(\mathcal{M}(e)\right) \right) \leq \sum_{v \in V(\Gamma')} \dim(\mathcal{M}(v)) - \dim(\mathbb{V}).
\end{equation}
If $\mathcal{M}$ is minimally rigid, then one additionally has:
\begin{equation} \label{eq: equality in nec_cond}
\sum_{e \in E}\left( \sum_{v\in e} \dim\left(\mathcal{M}(v)\right) - \dim\left(\mathcal{M}(e)\right) \right) = \sum_{v \in V} \dim(\mathcal{M}(v)) - \dim(\mathbb{V}).
\end{equation}
\end{theorem}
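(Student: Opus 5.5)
The inequality \eqref{ineq} comes from combining the Maxwell rule (\Cref{thm: Maxwell-rule}) for the restriction of $\mathcal{M}$ to a subgraph with a lower bound on $\dim H^{0}$ coming from trivial motions.

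\textbf{Step 1: reduce to $\Gamma'$.} Let $\Gamma'\subseteq\Gamma$ be a sub-hypergraph. Its incidence graph $I(\Gamma')$ is a subgraph of $I(\Gamma)$, and $\mathcal{M}$ restricted to $I(\Gamma')$ is the motion sheaf defined by the same subspaces $\mathcal{S}(v)$ for $v\in V(\Gamma')$. Since $\mathcal{M}$ is independent, \Cref{lem: Stresses extend} gives $H^{1}(I(\Gamma'),\mathcal{M}_{\vert I(\Gamma')})=0$.

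\textbf{Step 2: Maxwell rule on $\Gamma'$.} Applying \Cref{thm: Maxwell-rule} to $\Gamma'$ gives
\[\dim H^{0}(I(\Gamma'),\mathcal{M}) = \sum_{v\in V(\Gamma')}\dim\mathcal{M}(v)+\sum_{e\in E(\Gamma')}\dim\mathcal{M}(e)-\sum_{e\in E(\Gamma')}\sum_{v\in e}\dim\mathcal{M}(v).\]
Here each incidence $v\sim e$ contributes $\dim\mathcal{M}(v)$.

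\textbf{Step 3: bound $H^{0}$ from below.} Use the long exact sequence \eqref{les} for $\Gamma'$. The map $p_{\Gamma'}$ has kernel $H^{0}(I(\Gamma'),\mathcal{S})\cong\bigoplus_{C}\bigcap_{v\in V(C)}\mathcal{S}(v)$. Therefore
\[\dim H^{0}(I(\Gamma'),\mathcal{M})\ \ge\ \dim\operatorname{im} p_{\Gamma'} = \sum_{C\in K(\Gamma')}\Bigl(\dim\mathbb{V}-\dim\bigcap_{v\in V(C)}\mathcal{S}(v)\Bigr).\]

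\textbf{Main obstacle: the right-hand side may exceed $\dim\mathbb{V}$ only if $\Gamma'$ is disconnected.} If $\Gamma'$ is connected, the hypothesis $\bigcap_{v\in V(\Gamma')}\mathcal{S}(v)=0$ makes the lower bound exactly $\dim\mathbb{V}$. If $\Gamma'$ is disconnected, its components need not satisfy the intersection hypothesis individually. The point is that the bound is still at least $\dim\mathbb{V}$, by the following argument. Set $W_C=\bigcap_{v\in V(C)}\mathcal{S}(v)$, so that $\bigcap_C W_C=0$. For any two components $A,B$ we have $\dim(W_A\cap W_B)\ge \dim W_A+\dim W_B-\dim\mathbb{V}$. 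Inducting over the components gives $\dim\bigcap_C W_C\ge\sum_C\dim W_C-(k-1)\dim\mathbb{V}$, where $k=|K(\Gamma')|$. Since the left side is $0$, this yields $\sum_C(\dim\mathbb{V}-\dim W_C)\ge\dim\mathbb{V}$, as needed.

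Combining Steps 2 and 3 with $\dim H^{0}(I(\Gamma'),\mathcal{M})\ge\dim\mathbb{V}$ and rearranging gives \eqref{ineq}.

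\textbf{Equality for minimally rigid $\mathcal{M}$.} Now $\Gamma$ is connected, and rigidity means $p_\Gamma$ is surjective onto $H^{0}(I(\Gamma),\mathcal{M})$. Its kernel is $H^{0}(I(\Gamma),\mathcal{S})\cong\bigcap_{v\in V}\mathcal{S}(v)=0$, so $\dim H^{0}(I(\Gamma),\mathcal{M})=\dim\mathbb{V}$. Independence gives $H^{1}=0$. Substituting both into \Cref{thm: Maxwell-rule} yields \eqref{eq: equality in nec_cond}.
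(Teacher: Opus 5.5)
Your proposal is correct and follows the paper's proof essentially step for step: you pass to $\Gamma'$ via \Cref{lem: Stresses extend}, apply the Maxwell rule with $H^{1}=0$, and bound $\dim H^{0}(I(\Gamma'),\mathcal{M})$ below by $\dim \im(p_{\Gamma'}) = \sum_{C}\codim\bigl(\bigcap_{v\in V(C)}\mathcal{S}(v)\bigr)\geq \dim\mathbb{V}$, reading off the equality case from rigidity with $\Gamma'=\Gamma$. The only difference is that you explicitly prove the subadditivity of codimension under intersection across components, which the paper asserts in a single line.
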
 

\begin{proof}
We are given $H^{1}(I(\Gamma), \mathcal{M}) = 0$. By \Cref{lem: Stresses extend}, for any sub-hypergraph $\Gamma' = (V', E')$, one has $H^{1}(I(\Gamma'), \mathcal{M}) =0$. Thus, for any sub-hypergraph $\Gamma' \subseteq \Gamma$, applying \Cref{thm: Maxwell-rule} gives
\begin{align*}
\dim\left(H^{0}(I(\Gamma'), \mathcal{M}_{\vert I(\Gamma')})\right) &= \sum_{v \in V'} \dim(\mathcal{M}(v)) + \sum_{e \in E'} \dim(\mathcal{M}(e))  -\sum_{e\sim v \in I(\Gamma')} \dim(\mathcal{M}(v))\\
&= \sum_{v \in V'} \dim(\mathcal{M}(v)) + \sum_{e \in E'} \left( \dim(\mathcal{M}(e))  -\sum_{ v \in e} \dim(\mathcal{M}(v))\right).
\end{align*}
For $\Gamma'= \Gamma$, this gives the equality \eqref{eq: equality in nec_cond} when $\mathcal{M}$ is rigid.
If we can show that $\dim\left(H^{0}(I(\Gamma'), \mathcal{M}_{\vert I(\Gamma')})\right) \geq \dim(\mathbb{V})$ for sub-hypergraphs $\Gamma' =(V', E')$ with $\bigcap_{v\in V'} \mathcal{S}(v) =\{0\}$, we obtain the inequality \eqref{ineq}. It is clear that $\mathcal{M}_{\vert I(\Gamma')}$ is a motion sheaf on $I(\Gamma')$, and hence one has the map $p_{\Gamma'}:H^{0}(I(\Gamma'), \underline{\mathbb{V}}_{\vert I(\Gamma')}) \rightarrow H^{0}(I(\Gamma'), \mathcal{M}_{\vert I(\Gamma')})$. By the long exact sequence \eqref{les}, we have $$p_{\Gamma'}(H^{0}(I(\Gamma'), \underline{\mathbb{V}}_{\vert \Gamma'})) \cong \bigoplus_{C\in K(\Gamma')}  \frac{ \mathbb{V}}{ \bigcap_{v \in V(C)} \mathcal{S}(v)},$$

and we have
\begin{align*}
\dim\left(p_{\Gamma'}(H^{0}(I(\Gamma'), \underline{\mathbb{V}}_{\vert \Gamma'}))\right) &= \sum_{C\in K(\Gamma')} \dim(\mathbb{V}) - \dim(\bigcap_{v\in C} \mathcal{S}(v))  \\
&= \sum_{C\in K(\Gamma')} \codim(\bigcap_{v\in C} \mathcal{S}(v))\\
& \geq \codim(\bigcap_{v\in V'} \mathcal{S}(v)) = \dim(\mathbb{V}).
\end{align*}
This completes the proof of the theorem.
\end{proof}

If $s < \frac{(r-1)n}{r}$, for sheaves in $M_{s, n}(\Gamma)$, the necessary condition is given by $(n-s, n)-$sparsity of the hypergraph with edges copied a given number of times. For higher values of $s$, one can derive similar conditions, though the condition will not be matroidal.  
\begin{corollary}\label{cor: necessary_concrete}
    Let $\Gamma=(V, E)$ be an $r$-regular hypergraph. If $s < \frac{(r-1)n}{r}$, and if $\mathcal{M}\in M_{s,n}(\Gamma)$ is such that $\bigcap_{v\in X} \mathcal{S}(v) = 0$ for any subset $X\subseteq V$ with $|X|\geq r$, and if $\mathcal{M}$ is independent, then
    \begin{align*}
        \left((r-1)n -rs\right) |E'| &\leq (n-s)|V'| - n \textup{ for all sub-hypergraphs $(V', E') \subseteq \Gamma$}.
    \end{align*}
    If $\mathcal{M}$ is minimally rigid, then additionally, one has
    \begin{align*}
        \left((r-1)n -rs\right) |E| &= (n-s)|V| - n,
    \end{align*}
\end{corollary}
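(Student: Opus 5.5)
The plan is to specialise \Cref{thm: Necessary condition} to $M_{s,n}(\Gamma)$ by computing every dimension that occurs in \eqref{ineq} and \eqref{eq: equality in nec_cond}. Each vertex $v$ has $\dim(\mathcal{M}(v)) = n - s$, because $\mathcal{S}(v)\in \textup{Gr}(s,\mathbb{V})$. For a hyperedge $e$ we have $\mathcal{S}(e)=\bigcap_{v\in e}\mathcal{S}(v)$, and $e$ has $r$ distinct vertices (I read ``$r$-regular'' as $r$-uniform, as in \Cref{def: sparse}). The hypothesis that $\bigcap_{v\in X}\mathcal{S}(v)=0$ whenever $|X|\geq r$ then gives $\mathcal{S}(e)=0$, so $\dim(\mathcal{M}(e)) = n$. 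Consequently every hyperedge contributes the same amount to the left-hand side of \eqref{ineq}:
\begin{equation*}
\sum_{v\in e}\dim(\mathcal{M}(v)) - \dim(\mathcal{M}(e)) = r(n-s) - n = (r-1)n - rs .
\end{equation*}
The assumption $s<\frac{(r-1)n}{r}$ says exactly that this weight is positive. Positivity is what makes the resulting inequality a genuine count on $|E'|$, and it is the only place the bound on $s$ enters.

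Next I check the hypotheses of \Cref{thm: Necessary condition}. If $|V|\geq r$, the condition applied to $X=V$ gives $\bigcap_{v\in V}\mathcal{S}(v)=0$. For a sub-hypergraph $\Gamma'=(V',E')$ we need $\bigcap_{x\in V'}\mathcal{S}(x)=0$, and this holds as soon as $|V'|\geq r$, again by hypothesis. If $E'\neq\emptyset$, then $V'$ contains an edge and so has at least $r$ vertices, which is automatic. Applying \eqref{ineq} to $\Gamma'$ therefore yields
\begin{equation*}
\bigl((r-1)n - rs\bigr)|E'| \leq (n-s)|V'| - n .
\end{equation*}
In the minimally rigid case, \eqref{eq: equality in nec_cond} applied to $\Gamma$ itself gives the stated equality.

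The main obstacle is the degenerate sub-hypergraphs, where $E'=\emptyset$ and $|V'|<r$. There the claimed inequality reads $0\leq (n-s)|V'|-n$, which can fail, for instance when $|V'|=1$. I would handle this exactly as \Cref{def: sparse} does and read the statement as quantifying over sub-hypergraphs with $|V'|\geq r$ (or, equivalently, with $E'\neq\emptyset$). With that reading, the hypothesis on intersections makes \Cref{thm: Necessary condition} applicable in every case and the argument above covers them all. For the $E'\neq\emptyset$ reading, the argument above already suffices. For the $|V'|\geq r$ reading, any sub-hypergraph with $E'=\emptyset$ and $|V'|\geq r$ is also covered directly by \eqref{ineq}. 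So apart from this bookkeeping the proof is a direct computation.
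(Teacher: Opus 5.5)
Your proposal is correct and follows the paper's proof. Both compute $\dim\mathcal{M}(v)=n-s$ and $\dim\mathcal{M}(e)=n$, so that each hyperedge carries weight $(r-1)n-rs$, and then invoke \Cref{thm: Necessary condition}. Your extra bookkeeping fixes a gap the paper's one-line proof ignores. The inequality really does fail for $E'=\emptyset$, $|V'|=1$. By the same token the equality needs $|V|\geq r$: a single-vertex hypergraph is minimally rigid but gives $0=-s$. So the statement should be read with $|V'|\geq r$, and $|V|\geq r$ for the equality, as in \Cref{def: sparse}.
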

\begin{proof}
    We have
    \begin{align*}
       \dim(\mathcal{M}(v))& = n -\dim(\mathcal{S}(v)) = n- s\\
        \dim(\mathcal{M}(e))& = n -\dim(\bigcap_{v\in e} \mathcal{S}(v)) = n
    \end{align*}
    and thus 
    \begin{equation*}
       \sum_{ v \in e} \dim(\mathcal{M}(v)) - \dim(\mathcal{M}(e))= r(n -s) -n = (r-1) n - rs.
    \end{equation*}
    The result then follows from \Cref{thm: Necessary condition}.
\end{proof}
For graphs, one has $r=2$, and then the condition gives $(n-s, n)$-sparsity of $(n-2s)\Gamma$. 

\subsection{Genericity for motion sheaves}

In this section, we shall show that given a hypergraph $\Gamma$, the functions 
\begin{align*}
    h^{0}: M_{s,n}(\Gamma) \rightarrow \mathbb{N}: \mathcal{M} \rightarrow \dim(H^{0}(I(\Gamma), \mathcal{M}))\\
    h^{1}: M_{s,n}(\Gamma) \rightarrow \mathbb{N}: \mathcal{M} \rightarrow \dim(H^{1}(I(\Gamma), \mathcal{M}))
\end{align*}
are upper semi-continuous functions with respect to the Zariski topology. From this, it follows that the cohomology groups have the same dimension for a nonempty Zariski open subset of $M_{s,n}$.

\begin{definition}
   Let $X$ be a topological space, and let $Z\subseteq \mathbb{R}$. A function $f: X\rightarrow Z$ is said to be lower semi-continuous if $f^{-1}((a, \infty))$ is open for each $a\in Z$. A function $f: X\rightarrow Z$ is said to be upper semi-continuous if $f^{-1}((-\infty, a))$ is open for each $a\in Z$. 
\end{definition}

\begin{proposition}\label{prop: Rank-lower semicont}
    Let $X$ be a real algebraic variety, and let $f_{1,1} ,\dots, f_{n,m}\in \mathbb{R}(X)$ be  $n\times m$ rational functions, all defined on an open subset $U\subset X$. Then 
    \begin{equation*}
        \textup{rk}: U\rightarrow \mathbb{N}:x \mapsto 
         \textup{Rank}\left(
        \begin{bmatrix}
            f_{1,1}(x) &\dots &f_{1,m}(x)\\
            \vdots &\ddots &\vdots\\
            f_{n,1}(x) &\dots &f_{n,m}(x)
        \end{bmatrix}\right)
    \end{equation*}
    is a lower semi-continuous function with respect to the Zariski topology on $U$.
\end{proposition}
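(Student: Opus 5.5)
The plan is the standard characterisation of rank through minors. For each $k\in\mathbb{N}$, the set $\{x\in U : \textup{rk}(x) > k-1\} = \{x\in U: \textup{rk}(x)\geq k\}$ is exactly the set of points where some $k\times k$ minor of the matrix $F(x)=(f_{i,j}(x))$ is nonzero. Lower semi-continuity asks that $\textup{rk}^{-1}((a,\infty))$ be open for every $a$. Since $\textup{rk}$ takes integer values, $\textup{rk}^{-1}((a,\infty)) = \textup{rk}^{-1}([k,\infty))$ with $k=\lfloor a\rfloor+1$. It therefore suffices to show that $U_k:=\{x\in U:\textup{rk}(x)\geq k\}$ is Zariski open in $U$ for each integer $k\geq 0$. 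For $k\le 0$ we get $U_k=U$, and for $k>\min(n,m)$ we get $U_k=\emptyset$, so these cases are trivial.

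For $1\le k\le \min(n,m)$, let $I,J$ range over $k$-element subsets of the row and column indices, and let $\Delta_{I,J}=\det(f_{i,j})_{i\in I,j\in J}$. Each $\Delta_{I,J}$ is a polynomial expression in the $f_{i,j}$, so it is a rational function on $X$ that is regular on $U$. By the usual linear algebra fact, a matrix has rank $\geq k$ if and only if some $k\times k$ minor is nonzero. Hence
\begin{equation*}
U_k=\bigcup_{I,J}\{x\in U: \Delta_{I,J}(x)\neq 0\},
\end{equation*}
and the complement of $U_k$ in $U$ is the common zero set $\bigcap_{I,J} Z(\Delta_{I,J})$.

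The only point needing care is the claim that the zero set of a rational function regular on the open set $U$ is Zariski closed in $U$. To see this, cover $U$ by open sets $U_\alpha$ on which $\Delta_{I,J}=p_\alpha/q_\alpha$ with $p_\alpha,q_\alpha$ polynomials (in affine charts of $X$) and $q_\alpha$ nonvanishing on $U_\alpha$. Then $Z(\Delta_{I,J})\cap U_\alpha = Z(p_\alpha)\cap U_\alpha$, which is closed in $U_\alpha$. Closedness is local, so $Z(\Delta_{I,J})$ is closed in $U$. A finite intersection of closed sets is closed, so $U_k$ is open.

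I expect no real obstacle. The only subtle step is this last regularity and closedness point. Over $\mathbb{R}$ one could also replace the finite family of minors by the single regular function $\sum_{I,J}\Delta_{I,J}^2$, whose nonvanishing locus is exactly $U_k$; this makes the openness of $U_k$ immediate as the complement of one zero set.
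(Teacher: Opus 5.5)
The paper gives no proof of this proposition. It is stated as a standard fact and used as a black box, so there is no argument of the paper's to compare against. Your minors argument is the standard justification and it is correct: the reduction to integer thresholds, the description of $\{\textup{rk}\geq k\}$ as the locus where some $k\times k$ minor is nonzero, and your local proof that the zero set of a function regular on $U$ is Zariski closed in $U$ are all fine.

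Wherever the paper applies it (\Cref{lem: Intersections_of_subspaces}, \Cref{thm: Genericity_generalised_frameworks.}, \Cref{thm: Genericity_F}), the matrix entries are polynomials in the projection-operator coordinates $P_v$ and the forms $\alpha_e$. So the minors there are genuine polynomials, and the one delicate step you isolated is not even needed in those uses.
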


Commonly, one uses Plücker coordinates on Grassmannians. We will instead use a description of the Grassmannian as a space of projection operators, which gives it the structure of an affine real algebraic variety, see \cite[Section 3.4.2]{Bochnak1998}. For a comparison to Plücker coordinates, see \cite{devriendt2025two}, where, among other things, it is proven that the description that follows is birationally equivalent to the usual projective variety. For any subspace $S\subseteq \mathbb{R}^{n}$ of dimension $s$, let $P: \mathbb{R}^{n}\rightarrow \mathbb{R}^{n}$ be the orthogonal projection with image $S$. One then has the conditions
$ P=P^{t}, P^2=P$, and $\textup{Tr}(P) = s$.
These three equations define the Grassmannian $\textup{Gr}(s, \mathbb{R}^{n})$ as an affine subvariety of $\mathbb{R}^{n\times n}$.

\begin{lemma}\label{lem: Intersections_of_subspaces}
    For any subset $I\subseteq \{1, \dots, m\}$, the function 
    \begin{equation*}
    \textup{Gr}(s, \mathbb{V})^{m} \rightarrow \mathbb{N}: (S_1, \dots, S_m) \mapsto \dim(\cap_{ j \in I}(S_j))
    \end{equation*}
    is upper semi-continuous for the Zariski topology on $\textup{Gr}(s, \mathbb{V})^{m} $.
\end{lemma}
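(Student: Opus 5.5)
The plan is to write the intersection dimension as the corank of a matrix whose entries are polynomial in the projection-operator coordinates on $\textup{Gr}(s,\mathbb{V})^{m}$. Then \Cref{prop: Rank-lower semicont} does the rest. Identify $\mathbb{V}$ with $\mathbb{R}^{n}$ and represent each $S_j$ by its orthogonal projection $P_j$, with $P_j=P_j^t$, $P_j^2=P_j$ and $\textup{Tr}(P_j)=s$. These are affine coordinates in which $\textup{Gr}(s,\mathbb{V})^{m}$ is a real algebraic variety.

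The key observation is that $S_j=\ker(I-P_j)$, since $P_j$ is the projection onto $S_j$. Hence
\begin{equation*}
\bigcap_{j\in I} S_j = \ker(A), \qquad A:=\begin{bmatrix} I-P_{j_1}\\ \vdots \\ I-P_{j_k}\end{bmatrix},
\end{equation*}
where $I=\{j_1,\dots,j_k\}$ and $A$ is a $kn\times n$ matrix. Every entry of $A$ is a polynomial, indeed affine-linear, in the coordinates of the $P_j$, so these entries are regular functions defined on all of $\textup{Gr}(s,\mathbb{V})^{m}$. Rank–nullity gives
\begin{equation*}
\dim\Big(\bigcap_{j\in I}S_j\Big)=n-\textup{Rank}(A).
\end{equation*}
(If $I=\emptyset$, the empty intersection is $\mathbb{V}$ and the function is constant, which is trivially upper semi-continuous.)

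By \Cref{prop: Rank-lower semicont}, applied with $U$ the whole variety, $\textup{Rank}(A)$ is lower semi-continuous in the Zariski topology. Therefore $n-\textup{Rank}(A)$ is upper semi-continuous. Concretely, for any $a$,
\begin{equation*}
\{\dim(\cap S_j)<a\}=\{\textup{Rank}(A)>n-a\},
\end{equation*}
and the right-hand set is open.

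There is no serious obstacle here. The only point needing care is the choice of a polynomial description of $S_j$. The kernel description via $I-P_j$ is used precisely so that the intersection becomes the kernel of a single stacked matrix with polynomial entries. The alternative, using spanning vectors, would require a choice of basis and would only work locally.
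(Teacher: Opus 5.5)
Your proof is correct and follows the same strategy as the paper: express $\dim\bigl(\bigcap_{j\in I} S_j\bigr)$ as the corank of a matrix whose entries are polynomial in the projection-operator coordinates, then invoke \Cref{prop: Rank-lower semicont}. The only difference is the choice of matrix. The paper uses the difference map $(w_{k_i})_i \mapsto (P_{k_i}w_{k_i} - P_{k_j}w_{k_j})_{i<j}$ on $(\mathbb{R}^{n})^{\ell}$, whose kernel is isomorphic to $\bigl(\bigcap_{j} S_{k_j}\bigr)\oplus\bigoplus_{j} S_{k_j}^{\perp}$, so it must subtract the constant $\sum_j \dim(S_{k_j}^{\perp})$; your stacked matrix of the $\textup{Id}-P_{j}$ has kernel exactly the intersection, which avoids that correction and is slightly cleaner.
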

\begin{proof}
Let $I = \{ k_1, \dots, k_\ell\} \subseteq \{1, \dots, m\}$. Fixing an inner product and a basis on $\mathbb{V}$, we view $\mathbb{V}$ as being $\mathbb{R}^{n}$, and we can use the description of the Grassmannian as a space of projection operators given above. We will consider the kernel of the following map
\begin{equation*}
    \varphi: S_{k_1} \times \dots \times S_{k_\ell} \mapsto (\mathbb{R}^{n})^{\binom{\ell}{2}} : \left(w_{k_1}, \dots, w_{k_\ell}\right) \mapsto (w_{k_{i}} - w_{k_j})_{k_i < k_j},
\end{equation*}
as the choices of $S_{k_i}$ vary. One can immediately see that $\ker(\varphi)\cong \bigcap_{k_i \in I} S_{k_i}$.

To understand $\ker(\varphi),$ we let $\tilde{\varphi}: \mathbb{R}^{n} \times \cdots \times  (\mathbb{R}^{n})^{\binom{l}{2}} \rightarrow \mathbb{R}^{\binom{n}{2}}: (w_{k_1}, \dots, w_{k_\ell}) \mapsto \left(P_{k_i}(w_{k_i}) -  P_{k_j}(w_{k_j})\right)_{k_i < k_j},$ where $P_{k_i}$ is the orthogonal projection onto $S_{k_i}$. It is clear that
\begin{equation*}
    \ker(\tilde{\varphi}) \cong \ker(\varphi) \oplus S_{k_1}^{\perp} \oplus  \dots \oplus  S_{k_\ell}^{\perp}.
\end{equation*}
    In a matrix, the map $\tilde{\varphi}$ is given by   \begin{equation*}
    \begin{bmatrix}
        P_{k_1} & -P_{k_2} & 0 & 0 & \dots & 0 &0 \\
        P_{k_1} & 0 & -P_{k_3} & 0 & \dots & 0 &0 \\
\vdots & \vdots&\vdots &\vdots & \ddots & \vdots&\vdots \\
        0 &  0 & 0 & 0 & \dots & P_{k_{\ell-1}} & -P_{k_{\ell}}\\
    \end{bmatrix}.
    \end{equation*}
    Then, by \Cref{prop: Rank-lower semicont}, the rank of this matrix is lower semi-continuous as a function of $P_{k_i}$, which are precisely the coordinates of the Grassmannian. Thus $\dim(\ker(\tilde{\varphi})) = n\ell - \textup{rk}(\tilde{\varphi})$ is upper semi-continuous. Then, since 
    \begin{equation*}
        \dim(\ker(\tilde{\varphi})) = \dim(\ker(\varphi)) + \sum_{j=1}^{\ell}  \dim(S_{k_j}^{\perp}),
    \end{equation*}
    and since $\dim(S_{k_j}^{\perp})$ is constant, one sees that $\dim(\ker(\varphi))$ is also upper semi-continuous. 
\end{proof}

\begin{example}
    Let us consider the case where $s = 1$. In that case, one has $\dim(S_1 \cap S_2) = \{0\}$ if and only if $S_1\neq S_2$. With the notation of the lemma, this implies that the dimension of the intersection can only increase on the closed subsets defined by $S_i = S_j$ for all $i,j \in I$. 
\end{example}

\begin{theorem}\label{thm: Genericity_generalised_frameworks.}
Let $\Gamma$ be a hypergraph. Let $U \subseteq Gr(s,\mathbb{V})^{|V|}$ be a semi-algebraic subset such that for any $x=(S_1, \dots, S_{|V|}) \in U$, and for any hyper-edge $u_1, \dots, u_{k} \in E$, one has that $\dim(\bigcap_{i=1}^{k}S_{u_i})$ is constant as a function of $S_{u_i} \in U$. Identifying the set $M_{s,n}(\Gamma)$ with $\textup{Gr}(s,\mathbb{V})^{|V|}$, we treat points in $x\in U$ as sheaves. We define $h^{0}: U \rightarrow \mathbb{N}$ and $h^{1}: U \rightarrow \mathbb{N}$ by
\begin{align*}
    h^{0}(\mathcal{M})& := \dim(H^{0}(I(\Gamma), \mathcal{M})) & h^{1}(\mathcal{M}) :=  \dim(H^{1}(I(\Gamma), \mathcal{M})).
\end{align*}
 The functions $h^{0}(\mathcal{M})$ and $h^{1}(\mathcal{M})$ are upper semi-continuous with respect to the Zariski topology restricted to $U$. 
 In particular, there is a Zariski open subset of $\mathcal{M}\in U$ such that $h^{0}(\mathcal{M})$ and $h^{1}(\mathcal{M})$ attain the minimum dimension.
\end{theorem}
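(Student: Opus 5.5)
By the Maxwell rule (\Cref{thm: Maxwell-rule}), on $U$ one has
\begin{equation*}
h^{0}(\mathcal{M}) - h^{1}(\mathcal{M}) = \sum_{x\in X}\dim(\mathcal{M}(x)) - \sum_{v\sim e\in I}\dim(\mathcal{M}(v)).
\end{equation*}
By hypothesis the right-hand side is constant on $U$: each $\dim\mathcal{M}(v)=n-s$, and each $\dim\mathcal{M}(e)=n-\dim(\bigcap_{v\in e}S_v)$ is constant. Hence $h^{0}$ and $h^{1}$ differ by a constant on $U$. It therefore suffices to prove upper semi-continuity of $h^{0}$, or equivalently lower semi-continuity of the rank of the coboundary $d$. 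The ``in particular'' then follows because an upper semi-continuous $\mathbb{N}$-valued function attains its minimum $m$ on the set $h^{-1}((-\infty,m+1))$, which is open and nonempty. The same open set works for both functions, since $h^{1}=h^{0}-c$.

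To compute $h^{0}$ uniformly, I would lift $d$ to a map between fixed ambient spaces, in the same way as in the proof of \Cref{lem: Intersections_of_subspaces}. Identify $\mathbb{V}=\mathbb{R}^{n}$, and let $P_v$ be the orthogonal projection onto $S_v$, which gives the coordinates on the Grassmannian. Write $Q_v=I-P_v$ and $Q_e$ for the projection onto $(\bigcap_{v\in e}S_v)^{\perp}$. Then $\mathcal{M}(v)\cong \im(Q_v)$ and $\mathcal{M}(e)\cong\im(Q_e)$. Define
\begin{equation*}
\tilde d:(\mathbb{R}^{n})^{V}\oplus(\mathbb{R}^{n})^{E}\to(\mathbb{R}^{n})^{I},\qquad (w_v,w_e)\mapsto \bigl(Q_v w_v - Q_v w_e\bigr)_{v\sim e}.
\end{equation*}
Its kernel is $H^{0}(I(\Gamma),\mathcal{M})$ plus the junk $\bigoplus_v S_v\oplus\bigoplus_e\ker(Q_e)$. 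Here I use that $Q_vQ_e=Q_v$, since $\ker Q_e\subseteq S_v$, so the $e$-component only matters modulo $\ker Q_e$. The junk has constant dimension on $U$ by hypothesis. So $h^{0}=\dim\ker\tilde d-\text{const}=(n|X|-\textup{rk}\,\tilde d)-\text{const}$. By \Cref{prop: Rank-lower semicont}, $h^{0}$ is upper semi-continuous, provided the entries of $\tilde d$ are rational functions of the $P_v$ on a Zariski open subset containing $U$.

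\textbf{Main obstacle.} The main obstacle is precisely expressing $Q_e$ (or a substitute) algebraically in the $P_v$. I would first try to avoid $Q_e$ altogether. Instead of working with $\mathbb{V}/\mathcal{S}(e)$ directly, take $w_e\in\mathbb{R}^{n}$ free and note that the junk from the $e$-component is exactly $\bigcap_{v\in e}S_v$, which arises as $\{w_e: Q_vw_e=0\ \forall v\in e\}$. The map $\tilde d$ with only the $Q_v=I-P_v$ entries, which are polynomial in the Grassmannian coordinates, then has kernel $H^{0}(\mathcal{M})\oplus\bigoplus_v S_v\oplus\bigoplus_e\bigcap_{v\in e}S_v$. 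The last summands have constant dimension on $U$ by assumption. This yields a polynomial matrix, so \Cref{prop: Rank-lower semicont} applies on all of $\textup{Gr}(s,\mathbb{V})^{|V|}$ and restricts to $U$.

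The remaining care is to check that the subspace topology on the semi-algebraic set $U$ preserves openness of the sublevel sets. This holds because preimages are intersections of Zariski opens with $U$.
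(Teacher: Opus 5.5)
Your proposal is correct and is essentially the paper's argument. The paper also lifts the coboundary to a map with free $\mathbb{R}^{n}$ entries at every vertex and edge of $I(\Gamma)$, with restriction maps given by the complementary projections (polynomial in the projection-operator coordinates). It identifies the kernel with $H^{0}(I(\Gamma),\mathcal{M})\oplus\bigoplus_{v}S_v\oplus\bigoplus_{e}\bigcap_{v\in e}S_v$, then concludes by lower semi-continuity of rank and the Maxwell rule; the $Q_e$-free version you settle on is exactly this construction.
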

\begin{proof}
 We use the isomorphism $\textup{Gr}(s, \mathbb{R}^n) \rightarrow  \textup{Gr}(n-s, \mathbb{R}^n)$. Using the realisation as projection operators, it is given by
\begin{equation*}
   \varphi: \textup{Gr}(s, \mathbb{R}^n) \rightarrow  \textup{Gr}(n-s, \mathbb{R}^n): P \mapsto I - P.
\end{equation*}

Under this isomorphism, one recovers the space $S \in \textup{Gr}(s, \mathbb{R}^{n})$ which $P$ represents as the kernel of the orthogonal projection $P$, and one can think of the projection $P$ as being a projection onto $\mathbb{R}^{n}/S$. 
Thus, let $(S_{v})_{v\in V} \in U$, with associated orthogonal projections $P_{v}$ onto $S_{v}^{\perp}$. Consider the sheaf $\mathcal{F}$ on $I(\Gamma)=(V\cup E, I)$ with $\mathcal{F}(e) = \mathcal{F}(v) = \mathbb{R}^{n}$. We put
    \begin{equation*}
        r^{e}_{v_i \sim e} = P_{v_i}
    \end{equation*}
    When writing the operator $d: \bigoplus_{x\in V\cup E} \mathcal{F}(x) \rightarrow \bigoplus_{i \in I} \mathcal{F}(i)$ as a matrix, the entries are given by the entries of the $P_{v}$, which are coordinates of the Grassmannian. By \Cref{prop: Rank-lower semicont}, the dimension of $H^{0}(I(\Gamma), \mathcal{F})$ is upper semi-continuous on $U$.
    One has
    \begin{equation*}
        H^{0}(I(\Gamma),\mathcal{F}) \cong \left( \oplus_{v\in V} S_{v} \right) \oplus \left( \oplus_{e\in E} (\cap_{v\in e} S_{v}) \right) \oplus H^{0}(I(\Gamma), \mathcal{M}).
    \end{equation*}
    Then, since for $(S_{v})_{v\in V}\in U$ the ranks of $\cap_{v \in e} S_{v}$ are the same minimal number, we see that $h^{0}(\mathcal{M})$ is upper semi-continuous. By \Cref{thm: Maxwell-rule}, $h^{1}(\mathcal{M})$ is as well. 
\end{proof}

\begin{remark}
     \Cref{thm: Genericity_generalised_frameworks.} is not true if we replace $U$ by $\textup{Gr}(s, \mathbb{V})^{|V|}$. For instance, let $K_3$ be the complete graph on $3$ vertices $\{v_1,v_2,v_3\}$. Let $\mathbb{V}=\mathbb{R}^{3}$ and $s=1$. Pick $\mathcal{S}(v_1) =\mathcal{S}(v_2)  =\mathcal{S}(v_3) = \langle (1,0,0) \rangle$. One can compute that $h^{0}(\underline{\mathbb{R}}^{3}/ \mathcal{S}) = 2$. One can also compute that for any $\mathcal{S}\in S_{s,n}(K_3)$ with $\sum_{i=1}^{3} \mathcal{S}(v_i) = \mathbb{R}^{3}$, one has $h^{0}(\underline{\mathbb{R}^{3}}/ \mathcal{S}) = 3$. One can in fact also have $h^{0}(\underline{\mathbb{R}^{3}} / \mathcal{S}) = 4$ in the case that $\mathcal{S}(v_1), \mathcal{S}(v_2)$ and $\mathcal{S}(v_3)$ are all distinct, but when $\sum_{i=1}^{3} \mathcal{S}(v_i)$ has dimension $2$. On the set $U=\{ \mathcal{S} \in \textup{Gr}(1, \mathbb{R}^{3})^{3}~\vert ~\mathcal{S}(v_i)\neq \mathcal{S}(v_j) \}$, \Cref{thm: Genericity_generalised_frameworks.} applies, and we see that the dimension does increase on a certain closed subset $Y\subseteq 
    U$, but for a dense open subset of $\textup{Gr}(1, \mathbb{R}^{3})^{3}$ one has that $h^{0}(\underline{\mathbb{R}^{3}}/\mathcal{S}) =3$.
\end{remark}

\subsection{Associated sheaves on 
multigraphs}\label{assoc_sheaves}
In this section, given $\mathcal{M} \in M_{s,n}(\Gamma)$ for a graph $\Gamma$, we define sheaves on $(n-2s)\cdot \Gamma$ that have the same cohomology groups.

\begin{proposition}\label{prop: F from M}
    Let $\Gamma=(V, E)$ be a graph. Let $\mathcal{S} \in S_{s,n}(\Gamma)$ and let $\mathcal{M} = \underline{\mathbb{V}}/\mathcal{S}$. Define a sheaf $\mathcal{F}$ on $\Gamma$ by
    \begin{align*}
        \mathcal{F}(v) &= \mathcal{M}(v) = \mathbb{V}/\mathcal{S}(v) \textup{ for all }v\in V\\
        \mathcal{F}(e) &= \mathbb{V}/\left( \mathcal{S}(v) +\mathcal{S}(u)\right)\textup{ if } e = vu,
    \end{align*}
    and with maps $r^{v}_{e}:\mathcal{F}(v)\rightarrow \mathcal{F}(e)$ being the restriction modulo $\mathcal{S}(v) +\mathcal{S}(u)$.
    We have
        \begin{align*}
        H^{0}(\Gamma, \mathcal{F})&= H^{0}(I(\Gamma), \mathcal{M}),\\
        H^{1}(\Gamma, \mathcal{F})&= H^{1}(I(\Gamma),  \mathcal{M}).
    \end{align*}
\end{proposition}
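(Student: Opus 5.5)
We compare the two cochain complexes directly, rather than going through long exact sequences. For a graph $\Gamma$, the incidence graph $I(\Gamma)$ has vertex set $V\cup E$ and, for each edge $e=vu$, exactly two incidences $v\sim e$ and $u\sim e$. Orient every incidence so that $[v:v\sim e]=1$ and $[e:v\sim e]=-1$. The coboundary of $\mathcal{M}$ then sends $\big((w_v)_{v},(w_e)_{e}\big)$ to $\big(w_v - w_e \bmod \mathcal{S}(v)\big)_{v\sim e}$. Here $w_v\in \mathbb{V}/\mathcal{S}(v)$ and $w_e\in \mathbb{V}/\mathcal{S}(e)$, where $\mathcal{S}(e)=\mathcal{S}(v)\cap\mathcal{S}(u)$. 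For $\mathcal{F}$ we orient each edge $e=vu$ with $[v:e]=1$ and $[u:e]=-1$.

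\textbf{Step 1: $H^0$.} A section of $\mathcal{M}$ is a tuple $(w_v,w_e)$ with $w_e \equiv w_v \bmod \mathcal{S}(v)$ and $w_e\equiv w_u \bmod \mathcal{S}(u)$. We project to the vertex components. If $w_e=W+\mathcal{S}(e)$, then $w_v = W+\mathcal{S}(v)$ and $w_u=W+\mathcal{S}(u)$, so $w_v$ and $w_u$ agree modulo $\mathcal{S}(v)+\mathcal{S}(u)$. Hence $(w_v)_v$ lies in $\ker d_{\mathcal{F}}$.

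For injectivity, suppose all $w_v=0$. Then $W\in\mathcal{S}(v)\cap\mathcal{S}(u)=\mathcal{S}(e)$, so $w_e=0$.

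For surjectivity, suppose $(w_v)\in H^0(\Gamma,\mathcal{F})$. Choose representatives $a_v$ of $w_v$. Then $a_v-a_u = s_v + s_u$ with $s_v\in\mathcal{S}(v)$ and $s_u\in\mathcal{S}(u)$. Set $W=a_v-s_v=a_u+s_u$ and $w_e=W+\mathcal{S}(e)$; this satisfies both incidence conditions. This is the standard exactness of
\[
0\to \mathbb{V}/(\mathcal{S}(v)\cap\mathcal{S}(u))\to \mathbb{V}/\mathcal{S}(v)\oplus\mathbb{V}/\mathcal{S}(u)\to \mathbb{V}/(\mathcal{S}(v)+\mathcal{S}(u))\to 0,
\]
with the second map being the difference.

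\textbf{Step 2: $H^1$.} The cleanest route is dimension counting. By \Cref{thm: Maxwell-rule} on $I(\Gamma)$, together with its analogue for $\mathcal{F}$ on $\Gamma$ (which is just rank–nullity for $d_{\mathcal{F}}$), it suffices to check that the Euler characteristics agree. That is, we need
\[
\sum_{e}\Big(\dim \mathcal{M}(e) - \dim\mathcal{M}(v)-\dim\mathcal{M}(u)\Big) = -\sum_e \dim\mathcal{F}(e).
\]
The vertex terms coincide. For each edge, the short exact sequence above gives
\[
\dim\mathbb{V}/\mathcal{S}(e) - \dim\mathbb{V}/\mathcal{S}(v) - \dim\mathbb{V}/\mathcal{S}(u) = -\dim \mathbb{V}/(\mathcal{S}(v)+\mathcal{S}(u)),
\]
which is the Grassmann dimension formula $\dim(A\cap B)+\dim(A+B)=\dim A+\dim B$. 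Combined with Step 1, this yields $\dim H^1$ equality.

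If an actual isomorphism is wanted rather than only equal dimensions, define the map
\[
\mathcal{M}(I)=\bigoplus_{v\sim e}\mathbb{V}/\mathcal{S}(v)\to \mathcal{F}(E)
\]
by sending $(x_{v\sim e},x_{u\sim e})$ to $x_{v\sim e}-x_{u\sim e} \bmod (\mathcal{S}(v)+\mathcal{S}(u))$. This map is surjective. It carries $\im d_{\mathcal{M}}$ onto $\im d_{\mathcal{F}}$, and its kernel is exactly the image of the $w_e$-part of $d_{\mathcal{M}}$, again by exactness of the sequence above. So it induces an isomorphism on cokernels.

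\textbf{Main obstacle.} The only real content is the exactness of that four-term sequence, specifically the surjectivity used in Step 1. Beyond that, the main care needed is keeping the sign and orientation conventions consistent. Since the paper presumably means $H^i$ of the two sheaves to be equal up to natural isomorphism, the dimension argument suffices for all later uses.
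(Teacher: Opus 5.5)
Your proof is correct, and it takes a more direct route than the paper.

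The paper introduces an intermediate sheaf $\mathcal{G}$ on $I(\Gamma)$. It places the stalk $\mathbb{V}/(\mathcal{S}(v)+\mathcal{S}(u))$ on $e$ and on both incidences $v\sim e$ and $u\sim e$, and uses a surjection $\alpha:\mathcal{M}\to\mathcal{G}$. It asserts that $\ker(\alpha)$ has vanishing $H^0$ and $H^1$, so the long exact sequence gives $H^i(I(\Gamma),\mathcal{M})\cong H^i(I(\Gamma),\mathcal{G})$. It then treats $\mathcal{G}$ as a subdivision of $\mathcal{F}$, whose cohomology matches that of $\mathcal{F}$ ``by the definitions''.

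You instead compare the two cochain complexes in one step. The linear-algebra input is the same in both arguments. Acyclicity of $\ker(\alpha)$ amounts to the map $(A+B)/(A\cap B)\to (A+B)/A\oplus(A+B)/B$ being an isomorphism, which is exactly the exactness of your four-term sequence. Your $1$-cochain map $(x_{v\sim e},x_{u\sim e})\mapsto x_{v\sim e}-x_{u\sim e} \bmod (\mathcal{S}(v)+\mathcal{S}(u))$ is precisely $\alpha$ composed with the subdivision identification.

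The paper's factorisation is more structural and reusable, since acyclic kernels and subdivision invariance are general facts. However, it leaves both verifications unstated. Your version is self-contained and checks everything explicitly. It also produces the concrete vertex-projection isomorphism on $H^0$, which is the map the paper later relies on in \Cref{lem: Properties map Z -> M}. Your Euler-characteristic shortcut for $H^1$ is valid too, and it suffices for every later use, since those only need dimensions.
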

\begin{proof} Consider first a sheaf $\mathcal{G}$, defined on $I(\Gamma)$, by
    \begin{align*}
        \mathcal{G}(v) &= \mathcal{M}(v) \textup{ for all }v\in V\\
        \mathcal{G}(e) &= \mathbb{V}/\left( \mathcal{S}(v) +\mathcal{S}(u)\right)\textup{ for all } vu\in E \\
        \mathcal{G}(v\sim e) &= \mathbb{V}/\left( \mathcal{S}(v) +\mathcal{S}(u)\right)\textup{ for all } vu\in E.
    \end{align*}
    Consider the map $\alpha: \mathcal{M} \rightarrow \mathcal{G}$ given by
    \begin{align*}
        \alpha_v &= \textup{id} \textup{ for all }v\in V,\\
        \alpha_e &= \pi_e \textup{ for all } e\in E,\\
        \alpha_{v \sim e} &= \pi^{v}_e \textup{ for all } v\sim e \in I,
    \end{align*}
where $\pi_e: \mathbb{V}/\mathcal{S}(e)  \rightarrow \mathbb{V}/\left( \mathcal{S}(v) +\mathcal{S}(u)\right)$ and $\pi^{v}_e: \mathbb{V}/\mathcal{S}(v) \rightarrow \mathbb{V}/\left( \mathcal{S}(v) +\mathcal{S}(u)\right)$ are the reductions modulo $\mathcal{S}(v) +\mathcal{S}(u)$. Then we have a short exact sequence of sheaves:
\begin{align}\label{eq: les_prop_F-M}
    0\rightarrow \ker(\alpha) \rightarrow \mathcal{M} \rightarrow\mathcal{G} \rightarrow 0.
\end{align}
One has $\ker(\alpha)(v) = \{0\}, \ker(\alpha)(e) = \frac{\mathcal{S}(v) + \mathcal{S}(u) }{\mathcal{S}(v) \cap \mathcal{S}(u)}$, and $ \ker(\alpha)(v \sim e) = \frac{\mathcal{S}(v) + \mathcal{S}(u)}{\mathcal{S}(v)}$.  It then follows that
        \begin{align*}
        H^{0}(I(\Gamma), \ker(\alpha))&= 0,\\
        H^{1}(I(\Gamma), \ker(\alpha))&= 0.
    \end{align*}
Then, the long exact sequence associated with the short exact sequence \eqref{eq: les_prop_F-M} implies that
        \begin{align*}
        H^{0}(I(\Gamma), \mathcal{G})&\cong H^{0}(I(\Gamma), \mathcal{M}),\\
        H^{1}(I(\Gamma), \mathcal{G})&\cong H^{1}(I(\Gamma),  \mathcal{M}).
    \end{align*}
To finish the proof, it suffices to show 
\begin{align*}
     H^{0}(I(\Gamma), \mathcal{G})&\cong H^{0}(\Gamma, \mathcal{F}),\\
    H^{1}(I(\Gamma), \mathcal{G})&\cong H^{1}(\Gamma, \mathcal{F}),
\end{align*}
which follows in a straightforward way by simply using the definitions. 
\end{proof}

We now discuss how to define a sheaf on a multigraph related to any sheaf on a graph with the same cohomology groups. Let $\Gamma=(V, E)$ be a graph, let $\mathcal{F}$ be a sheaf on $\Gamma$, and fix a basis $\mathcal{B}_e = (b_1, \dots, b_{m_e})$ for each $\mathcal{F}(e)$ for $e\in E$. One can thus write the maps defining $\mathcal{F}$ as $r^{v}_{e}(x) =f_{e, 1}(x)b_{1} + \dots + f_{e, m_e}(x)b_{m_e}$ for some $f_{e, i} \in \mathbb{V}^{*}$.

Define $\tilde{\Gamma}$ to be the multi-graph with vertex set $V$, and edge set having $\dim(\mathcal{F}(e))$ copies of each edge. 

Label these edges $(e, 1), \dots, (e, m_e)$. Define a sheaf $\tilde{\mathcal{F}}$ on $\tilde{\Gamma}$ by
\begin{align*}
    \tilde{\mathcal{F}}(v) &= \mathcal{F}(v),\\
    \tilde{\mathcal{F}}(e, i) &= \mathbb{R}  \text{ for each edge } (e, i) \in E.
\end{align*}
the maps $r^{v}_{e, i}$ are given by $f_{e, i}$ defined above. One computes easily that: 
\begin{align*}
    H^{0}(\tilde{\Gamma}, \tilde{\mathcal{F}}) &\cong  H^{0}(\Gamma, \mathcal{F})\\
    H^{1}(\tilde{\Gamma},\tilde{\mathcal{F}}) &\cong  H^{1}(\Gamma, \mathcal{F}).
\end{align*}

We apply this construction to the sheaf $\mathcal{F}$ from \Cref{prop: F from M}. For any subspace $S\leq \mathbb{V},$ of dimension $2s$, and any $n-2s$ linearly independent linear forms $\alpha_1, \dots, \alpha_{n-2s} \in \mathbb{V}^{*}$ such that $\alpha_{i}(S) = 0$ for $i\in \{1, \dots, n-2s\}$, the map
\begin{equation*}
\mathbb{V}\rightarrow \mathbb{R}^{n-s}: x \mapsto (\alpha_{1}(x), \dots \alpha_{n-2s}(x))
\end{equation*}
gives a coordinate expression for the reduction $\mathbb{V}\rightarrow \mathbb{V}/S$. Thus, letting $\lambda \cdot \Gamma$ be the multigraph with $\lambda_e$ copies of each edge, where $\lambda_e = \dim(\mathbb{V}/(\mathcal{S}(v) + \mathcal{S}(u)))$, we pick any basis $\alpha_{e,1}, \dots, \alpha_{e, \lambda_e}$ of the annihilator of $\mathcal{S}(v) + \mathcal{S}(u)$ on each edge $vu$, and then we define $\tilde{\mathcal{F}}(v)$ to be equal to $\mathbb{V}/\mathcal{S}(v)$ and $\tilde{\mathcal{F}}(e)=\mathbb{R}$. The linear map $r^{v}_{(e,i)}$ is given by $[w_{v}] \mapsto \alpha_{e, i}(w_v)$ for each edge, which is well defined since $\mathcal{S}(v) \subseteq \mathcal{S}(v) + \mathcal{S}(u)\subseteq \ker(\alpha_{e})$. With this notation, we have:

\begin{proposition}\label{prop: construction-F-tilde}
    Let $s\leq n/2$, let $\mathcal{S} \in S_{s,n}(\Gamma)$ and let $\mathcal{M} = \underline{\mathbb{V}}/\mathcal{S}$ with  $\mathcal{S}(v) \cap \mathcal{S}(u) = 0$ for all edges $uv\in E$. Suppose that $\tilde{\mathcal{F}}$ as above using any basis $\alpha_{e, 1}, \dots \alpha_{e, n-2s}$ for each edge $e\in \Gamma$. Then
        \begin{align*}
        H^{0}((n-2s)\cdot \Gamma, \tilde{\mathcal{F}})&= H^{0}(I(\Gamma), \mathcal{M})\\
        H^{1}((n-2s)\cdot\Gamma, \tilde{\mathcal{F}})&= H^{1}(I(\Gamma),  \mathcal{M}).
    \end{align*}
\end{proposition}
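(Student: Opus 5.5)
The plan is to chain two isomorphisms already available in the paper: first pass from $\mathcal{M}$ on $I(\Gamma)$ to the sheaf $\mathcal{F}$ on $\Gamma$ of \Cref{prop: F from M}, and then from $\mathcal{F}$ to $\tilde{\mathcal{F}}$ on the multigraph via the basis-splitting construction described just before the statement. By \Cref{prop: F from M}, $H^{i}(\Gamma,\mathcal{F})\cong H^{i}(I(\Gamma),\mathcal{M})$ for $i=0,1$, with $\mathcal{F}(v)=\mathbb{V}/\mathcal{S}(v)$ and $\mathcal{F}(e)=\mathbb{V}/(\mathcal{S}(v)+\mathcal{S}(u))$. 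So it remains to identify $\tilde{\mathcal{F}}$ with the splitting of $\mathcal{F}$ and to check the multiplicities.

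The first step is the dimension count. Since $\mathcal{S}(v)\cap\mathcal{S}(u)=0$ and each has dimension $s$, we get $\dim(\mathcal{S}(v)+\mathcal{S}(u))=2s\le n$. Hence $\lambda_e=\dim\mathcal{F}(e)=n-2s$ for every edge, and $\lambda\cdot\Gamma=(n-2s)\cdot\Gamma$.

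The second step is the splitting. A basis $\alpha_{e,1},\dots,\alpha_{e,n-2s}$ of the annihilator $(\mathcal{S}(v)+\mathcal{S}(u))^{0}\le\mathbb{V}^{*}$ induces an isomorphism
\[
\mathcal{F}(e)\to\mathbb{R}^{n-2s},\qquad [w]\mapsto(\alpha_{e,1}(w),\dots,\alpha_{e,n-2s}(w)).
\]
This map is well defined and injective because the common kernel of the $\alpha_{e,i}$ is exactly $\mathcal{S}(v)+\mathcal{S}(u)$, and it is surjective by the dimension count. Composing $r^{v}_{e}$ with this isomorphism gives coordinate functions $f_{e,i}=\alpha_{e,i}$ on $\mathbb{V}/\mathcal{S}(v)$. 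These are precisely the restriction maps of $\tilde{\mathcal{F}}$. Replacing $\mathcal{F}(e)$ by an isomorphic space, with compatible restriction maps, does not change the coboundary up to an isomorphism of the target, so cohomology is unchanged.

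Finally we verify the general claim $H^{i}(\tilde\Gamma,\tilde{\mathcal{F}})\cong H^{i}(\Gamma,\mathcal{F})$. Fix the same orientation on each copy $(e,i)$ as on $e$. Then $\tilde{\mathcal{F}}(\tilde E)=\bigoplus_e\bigoplus_i\mathbb{R}\cong\bigoplus_e\mathbb{R}^{n-2s}\cong\mathcal{F}(E)$, and under this identification $\tilde d$ is $\Phi\circ d$, where $\Phi=\bigoplus_e$ (the coordinate isomorphism above). Hence $\ker\tilde d=\ker d$ and $\operatorname{coker}\tilde d\cong\operatorname{coker} d$.

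The only point requiring care is the orientation and sign convention on the duplicated edges, together with independence of the choice of basis. Both are handled by the fact that any two choices differ by an automorphism of the edge spaces. That automorphism preserves kernels and the dimensions of cokernels, so I expect no real obstacle here.
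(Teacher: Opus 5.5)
Your proposal is correct and follows the paper's route: the paper gives no separate proof, and obtains the statement by applying the general basis-splitting isomorphism $H^{i}(\tilde{\Gamma},\tilde{\mathcal{F}})\cong H^{i}(\Gamma,\mathcal{F})$ to the sheaf of \Cref{prop: F from M}, with $\lambda_e=n-2s$ because $\mathcal{S}(v)\cap\mathcal{S}(u)=0$. Your identification $\tilde{d}=\Phi\circ d$ simply spells out the step the paper describes as an easy computation.
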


We want to equip the set of possible sheaves of the form $\tilde{\mathcal{F}}$ with the structure of a real algebraic variety, as we have done in \Cref{sec: genericity for graph of groups realisations}. Given a multigraph $\Gamma=(V,E)$, we will thus consider the sheaves such that
\begin{align*}
    \mathcal{F}(v) &= \mathbb{V}/S_v &\textup{ for $v\in V$,}\\
    \mathcal{F}(e) &= \mathbb{V}/\textup{ker}(\alpha_{e}) \cong \mathbb{R} &\textup{ for $e\in E$},
\end{align*}
where $\dim(S_v) = s, \dim(\mathbb{V}) = n$ and $\alpha_e \in \mathbb{V}^{*}$ with $\alpha_{e}(S_v)=0$ if $v\in e$. 
One can see the set of sheaves arising in this way as a real algebraic variety. Namely, it consists of 
\begin{equation*}
 \left( (S_{v})_{v\in V}, (\alpha_{e})_{e\in E}\right) \in \textup{Gr}(s, \mathbb{V})^{|V|} \times (\mathbb{V}^{*})^{E}
\end{equation*}
such that $\alpha_{e}(S_v) = 0$ if $v\in e$. For a multigraph, define $Z_{s, n}(\Gamma)$ to be the set of sheaves such that this holds, and additionally, the subspaces at the vertices have an empty intersection, i.e:
\begin{equation*}
    Z_{s, n}(\Gamma):= \{ \left((S_{v})_{v\in V}, (\alpha_{e})_{e\in E}\right)  \in \textup{Gr}(s, \mathbb{V})^{|V|} \times (\mathbb{V}^{*})^{E}~\vert ~ \alpha_{e}(S_{v} ) = 0 \textup{ if $v\in e$ and } \forall u, v \in V: S_{v} \cap S_{u} = \{0\}    \}.
\end{equation*}
The condition that $\alpha(S) = 0$ is an algebraic condition, which can be expressed as $\alpha \circ P = 0$ with the coordinates expressing $\textup{Gr}(s, n)$ as projection operators. We remark that the definition only makes sense when $s  \leq n/2$, since otherwise the spaces $S$ necessarily have nonempty intersections. Similarly to $\mathcal{M}$, for every $\mathcal{F} \in Z_{s,n}(\Gamma)$, we have a short exact sequence of sheaves
\begin{equation}\label{eq: les-ass}
    0 \rightarrow \mathcal{S} \rightarrow \underline{\mathbb{V}} \rightarrow \mathcal{F} \rightarrow 0. 
\end{equation}
Here $\mathcal{S}(v)$ is an $s$-dimensional subspace of $\mathbb{V}$ for each $v\in V$, and $\mathcal{S}(e) = \ker(\alpha_{e})$ for all $e\in E$. The maps $ \mathcal{S}(v) \rightarrow \mathcal{S}(e)$ are given by the inclusions. We will denote these sheaves by $\mathcal{F} = \underline{\mathbb{V}}/\mathcal{S}$ in analogy to what we have done for the motion sheaves. We will now study some of the geometric properties of $Z_{s,n}$, but we first need a simple fact. 

\begin{lemma}\label{lem: sum is rational map}
    Let $s < n/2$. The map 
    \begin{equation*}
        Sum: \textup{Gr}(s, n)\times \textup{Gr}(s, n) \rightarrow \textup{Gr}(2s, n): (V_1, V_2)\mapsto V_1 + V_2
    \end{equation*}
    is a rational map.
\end{lemma}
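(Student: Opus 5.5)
We show that the sum map is given, on a nonempty Zariski open subset of $\textup{Gr}(s,n)\times \textup{Gr}(s,n)$, by rational functions in the entries of the projection operators $P_1, P_2$ representing $V_1, V_2$. Concretely, we will write down a formula for the orthogonal projection $Q$ onto $V_1+V_2$ whose entries are rational functions of the entries of $P_1$ and $P_2$, with denominators that do not vanish exactly when $\dim(V_1+V_2)=2s$. That condition is Zariski open by \Cref{lem: Intersections_of_subspaces} (it says $\dim(V_1\cap V_2)=0$), and it is nonempty because $2s<n$, so two transversal $s$-planes exist. Away from this locus the map is simply not defined, which is allowed for a rational map.

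The key object is the positive semidefinite symmetric matrix $A = P_1 + P_2$. Its image is exactly $\im(P_1)+\im(P_2) = V_1+V_2$: since $P_i=P_i^tP_i$, we have $x^tAx=|P_1x|^2+|P_2x|^2$. Hence $\ker A = V_1^\perp\cap V_2^\perp = (V_1+V_2)^\perp$, and symmetry gives $\im A=(\ker A)^\perp$. So $Q$ is the orthogonal projection onto $\im(A)$, and the task reduces to expressing this projection rationally in $A$.

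For this we use the characteristic polynomial $\chi_A(t)=\det(tI-A)=t^n+c_{1}t^{n-1}+\dots+c_n$, whose coefficients are polynomials in the entries of $A$. On the locus where $\textup{rk}(A)=2s$, the eigenvalue $0$ has multiplicity exactly $n-2s$, because $A$ is diagonalisable. Thus $\chi_A(t)=t^{n-2s}g(t)$ with $g(0)=c_{2s}\neq 0$, where $c_{2s}$ is the coefficient of $t^{n-2s}$, up to sign the sum of the principal $2s$-minors. Moreover $c_{2s}\neq 0$ holds precisely when $\textup{rk}(A)\geq 2s$. By Cayley--Hamilton, or directly on an eigenbasis, the matrix $I - g(A)/g(0)$ is the orthogonal projection onto $\im(A)$. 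Indeed, $g(A)$ kills every eigenvector with nonzero eigenvalue and equals $g(0)$ times the identity on $\ker A$. Therefore
\[
Q = I - \frac{1}{c_{2s}}\left(A^{2s}+c_1A^{2s-1}+\dots+c_{2s}I\right),
\]
which has polynomial numerator and denominator $c_{2s}$ in the entries of $P_1,P_2$.

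Finally, we check that $Q$ lands in $\textup{Gr}(2s,n)$, which is automatic: $Q$ is an orthogonal projection of trace $2s$. We also check that $Q$ is well defined as a function on the variety. The entries of $P_1,P_2$ are the coordinate functions, so any two representatives agree, and $c_{2s}$ is a regular function that is nonzero on the open set described above. The main subtle step is the multiplicity argument: it identifies $c_{2s}\neq 0$ with transversality and guarantees that the formula yields the projection rather than merely some matrix with the right image. Symmetry of $A$ makes this clean, since it rules out nilpotent parts at the eigenvalue $0$.
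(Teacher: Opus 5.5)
Your proof is correct, but it takes a different route from the paper. The paper works in Pl\"ucker coordinates. There the sum $V_1+V_2$ corresponds to the wedge product $(v_1\wedge\dots\wedge v_s)\wedge(w_1\wedge\dots\wedge w_s)$, which is bilinear, hence given by bihomogeneous polynomials, in the Pl\"ucker vectors of $V_1,V_2$. This product is nonzero exactly on the transversal locus $O$, so the map is regular on $O$.

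You instead stay in the projection-operator model and write the orthogonal projection onto $V_1+V_2$ explicitly as $I-g(A)/c_{2s}$ with $A=P_1+P_2$. Your supporting facts all check out:
\begin{itemize}
\item $\im A=V_1+V_2$, since $Ax=0$ forces $\lvert P_1x\rvert^2+\lvert P_2x\rvert^2=0$.
\item Symmetry makes the zero eigenvalue semisimple, so its multiplicity is exactly $n-2s$ on the transversal locus.
\item $c_{2s}$ is the $2s$-th elementary symmetric function of the nonnegative spectrum. It therefore vanishes precisely when $\textup{rk}(A)<2s$.
\end{itemize}

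The Pl\"ucker argument is shorter and makes the bilinear nature of the map transparent. It also does not use the inner product. However, it lives in projective Pl\"ucker space. To feed it into the next lemma, where the bundle $E^{\perp}_{2s,n}$ is cut out by $\alpha\circ P=0$ in projection coordinates, one has to tacitly identify the two models of the real Grassmannian.

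Your argument uses the real orthogonal structure, namely symmetry and positive semidefiniteness of $P_1+P_2$. In exchange, it lands directly in the coordinates the paper actually uses elsewhere. It has a single explicit denominator $c_{2s}$ whose nonvanishing set is exactly $O$. So it proves Zariski openness of the domain and regularity on all of $O$ at once, without needing the lemma on intersections of subspaces.
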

\begin{proof}
    The map is defined on the subset $O\subseteq\textup{Gr}(s, n)\times \textup{Gr}(s, n)$ of pairs $(V_1,V_2)$ such that $V_1\cap V_2 = 0$, which is Zariski open by \Cref{lem: Intersections_of_subspaces}. Now, using Plücker coordinates, one has that the map is given by
    \begin{equation*}
        ((v_1\wedge \dots \wedge v_k), (w_1\wedge \dots \wedge w_k))\mapsto (v_1\wedge \dots \wedge v_k \wedge w_1\wedge \dots \wedge w_k)
    \end{equation*}
    and it is easy to see that, in terms of the Plücker coordinates, this map is given by bihomogeneous polynomials, giving a regular map on $O$. 
\end{proof}

\begin{lemma}\label{lem: Properties map Z -> M}
     The map $f: Z_{s,n}((n-2s)\cdot \Gamma) \rightarrow M_{s,n}(\Gamma)$ given by $((P_{v})_{v\in V}, (\alpha_{e})_{e\in E})\mapsto (P_{v})_{v\in V}$ gives $Z_{s,n}((n-2s)\cdot \Gamma )$ the structure of a bundle with fibre being a product of real vector spaces over a Zariski open subset of $M_{s,n}(\Gamma)$, and one has
    \begin{align*}
        \min_{\mathcal{F} \in f^{-1}(\mathcal{M})}\dim\left( H^{0}((n-2s)\cdot \Gamma, \mathcal{F}\right) &= \dim\left( H^{0}(I(\Gamma), \mathcal{M})\right),\\
        \min_{\mathcal{F} \in f^{-1}(\mathcal{M})}\dim \left( H^{1}((n-2s)\cdot \Gamma, \mathcal{F}\right)&= \dim\left( H^{1}(I(\Gamma), \mathcal{M})\right).
    \end{align*}
    Moreover, the variety $Z_{s,n}$ is irreducible and nonsingular.
\end{lemma}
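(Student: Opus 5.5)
Let $O\subseteq M_{s,n}(\Gamma)$ be the set of $(S_v)_{v\in V}$ with $S_u\cap S_v=0$ for all $u\neq v$. By \Cref{lem: Intersections_of_subspaces} this set is Zariski open, since $S_u\cap S_v=0$ is the locus where an upper semi-continuous function takes its minimum. By definition of $Z_{s,n}$, the image of $f$ lies in $O$. For a point $(S_v)\in O$, the fibre $f^{-1}((S_v))$ consists of tuples $(\alpha_{(e,i)})$ with each $\alpha_{(e,i)}$ in the annihilator $(S_v+S_u)^{\circ}$ for $e=vu$. This annihilator has dimension $n-2s$, because $S_v\cap S_u=0$. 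So the fibre is the product $\prod_{e\in E}((S_v+S_u)^{\circ})^{n-2s}$ of real vector spaces.

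To show this is a vector bundle, I use \Cref{lem: sum is rational map}. It says that $(S_v,S_u)\mapsto S_v+S_u$ is regular on $O$, so $e\mapsto (S_v+S_u)^{\circ}$ is a regular map $O\to \textup{Gr}(n-2s,\mathbb{V}^*)$. Hence $Z_{s,n}((n-2s)\Gamma)$ is the pullback of products of tautological bundles along a regular map, and pullbacks of algebraic vector bundles are algebraic vector bundles. In projection coordinates the fibre over $P$ is $\{\alpha: \alpha\circ P_{e}=0\}$ with $P_e$ the projection onto $S_v+S_u$, and $P_e$ depends regularly on the point of $O$. This gives local trivialisations over the Zariski open sets where a fixed choice of coordinate minors of $I-P_e$ is invertible.

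Irreducibility and nonsingularity then follow from the bundle structure over $O$. The Grassmannian $\textup{Gr}(s,\mathbb{V})$ is irreducible and nonsingular, hence so is the product $\textup{Gr}(s,\mathbb{V})^{|V|}$. A nonempty Zariski open subset $O$ of it is therefore irreducible and nonsingular too; $O$ is nonempty because $s\le n/2$ allows pairwise transverse subspaces. The total space of an algebraic vector bundle over an irreducible nonsingular base is irreducible and nonsingular, being locally isomorphic to $U\times\mathbb{R}^N$. I expect the main obstacle to be this part: making rigorous that the real algebraic set $Z_{s,n}$ agrees scheme-theoretically with the bundle. One must check that the defining equations $\alpha\circ P_v=0$, $\alpha\circ P_u=0$ cut out a smooth locus of the expected dimension, i.e. the Jacobian has constant rank $2s$ per copy of $\alpha$. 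That rank is exactly $\dim(S_u+S_v)=2s$ on $O$, so the Jacobian criterion applies.

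For the cohomology statement, fix $\mathcal{M}\in O$ and $\mathcal{F}\in f^{-1}(\mathcal{M})$.
\begin{itemize}
\item \textbf{Attaining the value.} If, for every edge $e$, the forms $\alpha_{(e,1)},\dots,\alpha_{(e,n-2s)}$ form a basis of $(S_v+S_u)^{\circ}$, then \Cref{prop: construction-F-tilde} gives $H^i((n-2s)\Gamma,\mathcal{F})\cong H^i(I(\Gamma),\mathcal{M})$. So the minimum is at most $\dim H^i(I(\Gamma),\mathcal{M})$.
\item \textbf{Lower bound for $H^1$.} For arbitrary $\alpha$'s in the fibre, use the sheaf $\mathcal{F}'$ on $\Gamma$ from \Cref{prop: F from M}, whose edge stalk is $\mathbb{V}/(S_u+S_v)$. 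The restriction maps of $\tilde{\mathcal{F}}$ factor through $\mathcal{F}'(e)$ via a linear map $L_e:\mathcal{F}'(e)\to\mathbb{R}^{n-2s}$ given by the $\alpha_{(e,i)}$. Hence $d_{\mathcal{F}}=L\circ d_{\mathcal{F}'}$, where $L$ acts on the total edge space, which has the same dimension as the edge space of $\mathcal{F}$. It follows that $\textup{rk}(d_{\mathcal{F}})\le \textup{rk}(d_{\mathcal{F}'})$. Since $\mathcal{F}$ and $\mathcal{F}'$ have the same vertex stalks and equal total edge dimensions, this gives $h^0(\mathcal{F})\ge h^0(\mathcal{F}')=h^0(\mathcal{M})$ and likewise $h^1(\mathcal{F})\ge h^1(\mathcal{M})$, the last equalities by \Cref{prop: F from M}.
\end{itemize}
Together these give both minimum formulas.
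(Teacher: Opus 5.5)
Your proposal is correct and follows essentially the paper's route. In both, the fibre is $n-2s$ copies of the annihilator of $S_u+S_v$, and $Z_{s,n}((n-2s)\cdot\Gamma)$ is the pullback of the annihilator bundle along the sum map over the open set of pairwise transverse tuples, which gives irreducibility and nonsingularity via local trivialisations; the minimum is attained where the $\alpha_{(e,i)}$ form bases. Your lower bound via the factorisation $d_{\mathcal{F}} = L\circ d_{\mathcal{F}'}$ through the sheaf of \Cref{prop: F from M} is a rank-theoretic repackaging of the paper's injection $H^{0}(I(\Gamma),\mathcal{M})\hookrightarrow H^{0}((n-2s)\cdot\Gamma,\mathcal{F})$ plus constant Euler characteristic, and it gives both inequalities at once.
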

\begin{proof}
    For $P_v$, denote the represented subspace by $S_v$.  Given $(P_{v})_{v\in V}$, we see that $f^{-1}((P_{v})_{v\in V})$ is given by those $((\alpha_{e})_{e\in E}, (P_{v})_{v\in V})$, such that $\alpha_{e} \in \textup{Ann}(S_{v} + S_{u})$ for $e=uv$. By \cite[proposition 12.1.4]{Bochnak1998}, the variety $E_{2s,n}^{\perp} = \{ (\alpha,P)\in \mathbb{V}^{*}\times \textup{Gr}(2s, n)~\vert~ \alpha \circ P=0\}$ is an algebraic vector bundle over $ \textup{Gr}(2s, n)$. 
    For each edge, pull back $E_{2s, n}^{\perp}$ under the map $M_{s, n}(\Gamma) \rightarrow \textup{Gr}(2s, n): (S_{x})_{x\in V} \mapsto (S_{u} + S_{v})$, and then take the direct sum of all the resulting bundles. One then obtains $Z_{s, n}(\Gamma)$, which shows that $Z_{s,n}(\Gamma)$ is a vector bundle. The variety $Z_{s,n}(\Gamma)$ is irreducible since it is the total space of a vector bundle over an irreducible base. This is easily seen to be an irreducible variety by covering $Z_{s, n}(\Gamma)$ with the local trivialisations $U_i\times \mathbb{R}^{K}$ for Zariski open sets $U_i \subseteq M_{s,n}(\Gamma)$ (see \cite[Definition 12.1.1]{Bochnak1998}), which are seen to be irreducible using \cite[Theorem 2.8.3 (iii)]{Bochnak1998}. Since $\textup{Gr}(s, n)$ is nonsingular, so is $Z_{s,n}(\Gamma)$.\\
    We saw in \Cref{prop: construction-F-tilde} that if, for each $e\in \Gamma$, the $\alpha_{e,1},\dots, \alpha_{e,n-2s}$ are a basis for the annihilator of $S_v +S_W$, then $H^{0}((n-2s)\cdot \Gamma, \mathcal{F}) \cong H^{0}( I(\Gamma), \mathcal{M})$. Otherwise, one may still define an injective map $\varphi: H^{0}(I(\Gamma), \mathcal{M}) \rightarrow H^{0}((n-2s)\cdot \Gamma, \mathcal{F})$, which is explicitly given by $(\left(W_{v}\right)_{v \in V},\left(W_{e}\right)_{e \in E} ) \mapsto (W_{v})_{v\in V}$, and then one has $\dim\left(H^{0}((n-2s)\cdot \Gamma, \mathcal{F})\right)\geq \dim\left(H^{0}(I(\Gamma), \mathcal{M}))\right)$. This shows the claim about the minimum for $h^{0}$. Using $h^{0}(\mathcal{F}) - h^{1}(\mathcal{F}) =  \mathcal{F}(V) - \mathcal{F}(E),$  
    the same follows for $h^{1}$.
\end{proof}

Similarly to \Cref{thm: Genericity_generalised_frameworks.}, we have the following.
\begin{theorem}\label{thm: Genericity_F}
Let $\Gamma=(V,E)$ be a multigraph. Define $h^{0}: Z_{s, n}(\Gamma) \rightarrow \mathbb{N}$ and $h^{1}: Z_{s, n}(\Gamma) \rightarrow \mathbb{N}$ by
\begin{align*}
    h^{0}(\mathcal{F})& := \dim(H^{0}(\Gamma, \mathcal{F})) & h^{1}(\mathcal{F}) :=  \dim(H^{1}(\Gamma, \mathcal{F})).
\end{align*}
 The functions $h^{0}$ and $h^{1}$ are upper semi-continuous with respect to the Zariski topology.
\end{theorem}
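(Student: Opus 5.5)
The plan is to follow the proof of \Cref{thm: Genericity_generalised_frameworks.}, but the edge spaces are simpler here: each $\mathcal{F}(e)\cong\mathbb{R}$ via $\alpha_e$. First I will write the coboundary map $d$ as a matrix whose entries are polynomial in the coordinates of $Z_{s,n}(\Gamma)$. I fix an inner product and a basis, so that $\mathbb{V}=\mathbb{R}^{n}$. I represent each $S_v$ by its orthogonal projection $P_v$, and set $Q_v = I-P_v$, the orthogonal projection onto $S_v^{\perp}$. I identify $\mathcal{F}(v)=\mathbb{V}/S_v$ with $S_v^{\perp}=\im(Q_v)$.

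Next I introduce an auxiliary sheaf $\hat{\mathcal{F}}$ on $\Gamma$ with $\hat{\mathcal{F}}(v)=\mathbb{R}^{n}$ and $\hat{\mathcal{F}}(e)=\mathbb{R}$. Its restriction maps are $r^{v}_{e}(w)=\alpha_e(Q_v w)$. Since $\alpha_e(S_v)=0$ for $v\in e$, one has $\alpha_e\circ Q_v=\alpha_e$ on the relevant incidences. The coboundary $\hat d$ is therefore a $|E|\times n|V|$ matrix whose entries are polynomial functions of the coordinates $(P_v,\alpha_e)$ of the affine variety $Z_{s,n}(\Gamma)$. By \Cref{prop: Rank-lower semicont}, $\textup{rk}(\hat d)$ is lower semi-continuous on $Z_{s,n}(\Gamma)$ in the Zariski topology.

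I will then compare $\hat{\mathcal{F}}$ with $\mathcal{F}$. The morphism $\hat{\mathcal{F}}\to\mathcal{F}$ given by $w\mapsto [w]$ at vertices and the identity at edges is surjective. Its kernel is the sheaf with value $S_v$ at each vertex, $0$ at each edge, and zero restriction maps. This kernel has $H^{0}=\bigoplus_v S_v$ and $H^{1}=0$. The long exact sequence then gives
\[
\dim H^{0}(\Gamma,\hat{\mathcal{F}})=\dim H^{0}(\Gamma,\mathcal{F})+s|V|,\qquad H^{1}(\Gamma,\hat{\mathcal{F}})\cong H^{1}(\Gamma,\mathcal{F}).
\]
The same conclusion can be checked directly: $\hat d$ vanishes on $\bigoplus_v S_v$ and agrees with $d$ on $\bigoplus_v S_v^{\perp}$. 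So $\textup{rk}(d)=\textup{rk}(\hat d)$ is lower semi-continuous. Hence $h^{0}(\mathcal{F})=(n-s)|V|-\textup{rk}(\hat d)$ and $h^{1}(\mathcal{F})=|E|-\textup{rk}(\hat d)$ are upper semi-continuous, because $\dim\mathcal{F}(V)=(n-s)|V|$ and $\dim\mathcal{F}(E)=|E|$ are constant on $Z_{s,n}(\Gamma)$.

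The one point needing care is that the vertex and edge dimensions really are constant over $Z_{s,n}(\Gamma)$. For this I have to make sure that $\mathcal{F}(e)=\mathbb{R}$ is used as the fixed target even when $\alpha_e=0$. Unlike in \Cref{thm: Genericity_generalised_frameworks.}, no intersection dimensions enter, so no semi-algebraic restriction is needed. This is why the statement holds on all of $Z_{s,n}(\Gamma)$. If the definition is read as $\mathcal{F}(e)=\mathbb{V}/\ker(\alpha_e)$, which is $0$ when $\alpha_e=0$, then $h^{1}$ can jump down at $\alpha_e=0$. In that case I would restrict to the open locus where all $\alpha_e\neq 0$, or adopt the convention $\mathcal{F}(e)=\mathbb{R}$ with restriction maps $\alpha_e$, which matches the construction of $\tilde{\mathcal{F}}$ before \Cref{prop: construction-F-tilde}.
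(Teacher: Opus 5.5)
Your proposal is correct and is essentially the paper's proof: your $\hat d$ is exactly the paper's matrix $R$ with rows $\alpha_e\circ(\textup{Id}-P_v)$, and your morphism $\hat{\mathcal{F}}\to\mathcal{F}$ justifies the identity $\ker(R)\cong H^{0}(\Gamma,\mathcal{F})\oplus\bigoplus_{v\in V}S_v$, which the paper asserts without proof (you also obtain $H^{1}(\Gamma,\hat{\mathcal{F}})\cong H^{1}(\Gamma,\mathcal{F})$ directly rather than via the Euler characteristic). Your caveat about $\alpha_e=0$ is well taken: the paper's last step silently uses that $\dim\mathcal{F}(E)=|E|$ is constant on $Z_{s,n}(\Gamma)$, i.e.\ the convention $\mathcal{F}(e)=\mathbb{R}$ with restriction maps $\alpha_e$, and without it $h^{1}$ can indeed drop on $\{\alpha_e=0\}$.
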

\begin{proof}
    The proof is analogous to that of \Cref{thm: Genericity_generalised_frameworks.}. For each $(P_{v})_{v\in V}, (\alpha_{e})_{e\in E}$, where $P_v$ represents $S_{v}\in \textup{Gr}(s, \mathbb{V})$, define the $|E|\times n\cdot |V|$ matrix $R$ with rows $\alpha_{e} \circ (\textup{Id} - P_{v})$.
    One can verify that
    \begin{equation*}
        \ker(R) \cong H^{0}(\Gamma, \mathcal{F}) \oplus \bigoplus_{v\in V} S_{v}
    \end{equation*}
    Since the dimensions of $S_{v}$ are constant, the theorem follows by \Cref{prop: Rank-lower semicont}, using the fact that $h^{0}(\mathcal{F}) - h^{1}(\mathcal{F}) =\dim(\mathcal{F}(V)) - \dim(\mathcal{F}(E))$ 
\end{proof}

\subsection{Inductive constructions for associated sheaves}

\begin{definition}
    Let $\Gamma=(V, E)$ be a multi-graph. A $d$-dimensional $k$-extension is a new graph $\Gamma'= (V\cup \{v_*\},  E')$ resulting from removing $k$ edges from $\Gamma$, adding a new vertex $v_{*}$, adding edges to the endpoints of the deleted edges, and adding $d - k$ additional edges. More formally, one can write $E' = \left(E\setminus \{e_1, \dots, e_k\}\right) \cup \{f_1, \cdots, f_{d+k}\}$. If the edges $e_j$ are given by $\{u_j, u_j'\}$, then the new edges are given by 
    \begin{align*}
        f_{2j-1} &= v_* u_j & \textup{for $j \leq k$},\\
        f_{2j} &= v_* u_j' & \textup{for $j \leq k$}, \\
        f_{2k+j} &= v_* v_j \textup{ for some $v_j \in V$} & \textup{for $j \leq d-k$}.
    \end{align*}
\end{definition}

We will provide an algebraic condition for the extension moves to preserve independence. First, let us make some straightforward observations on how to do computations in $H^{1}(\Gamma, \underline{\mathbb{V}})$. 

Suppose that $\Gamma=(V,E)$ is a graph, with some orientation $[v: e]$ on the edges, and that $(w_{e})_{e\in E}, (w_{e}')_{e\in E} \in \mathbb{V}^{|E|}$ define elements of $H^{1}(\Gamma, \underline{\mathbb{V}})$. By definition, they define the same element of $ H^{1}(\Gamma, \underline{\mathbb{V}})$ if and only if
\begin{equation*}
    (w_{e} - w_{e}')\in \im(d).
\end{equation*}
Taking some $W = \sum_{v\in V} w_v \in \bigoplus_{v\in V}\mathcal{F}(v)$ such that $w_{v}= 0$ for all $v\in V$ with $v\neq v_0$, and $w_{v_0} \neq 0$, one finds that for any cochain $\sum_{e\in E} w_{e}$, that in $H^{1}(\Gamma, \underline{\mathbb{V}})$
\begin{equation} \label{eq: rule for h1}
    \sum_{e\in E} w_e = \sum_{e\in E} w_e + \sum_{e: v_0\in e} [v_0: e] w_{v_0}.
\end{equation}
One finds such an equation for all $v\in V$, and we will repeatedly apply this rule. Another way to write this rule is that in the edge neighbourhood $N(v) =\{e\in E~ \vert ~ v\in  e\}$ of any vertex $v$, one has, denoting the outgoing edges by $N^{-}(v) =\{e\in E~ \vert ~ v = o(e)\}$, and the incoming edges by $N^{+}(v) =\{e\in E~ \vert ~ v = t(e)\}$, one has
\begin{equation*}
    \sum_{e\in E} w_e  = \sum_{e\in E\setminus N(v)} w_e + \sum_{e\in N^{+}(v)} (w_e - \tilde{w}) + \sum_{e\in N^{-}(v)} (w_e + \tilde{w}).
\end{equation*}
When the underlying graph $\Gamma$ is ambiguous, we will write $N_{\Gamma}(v)$. We emphasise that $N(v)$ is a subset of edges, in contrast with the usual use of this notation.  

One can see that if two cochains $w_{e}, w'_{e}$ define the same element of $H^{1}(\Gamma, \underline{\mathbb{V}})$, then one can show that they are equal by repeatedly applying equation \eqref{eq: rule for h1}. See also \Cref{fig: computation_rule} for an illustration. 

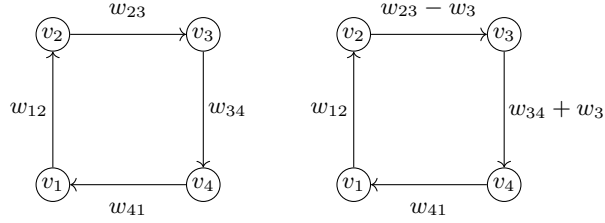
\begin{figure}[h]
\centering
\begin{tikzpicture}[
    every node/.style={circle, draw, fill=white, inner sep=1pt, minimum size=5pt, font=\small}
]
    \node (a) at (0,0) {$v_1$};
    \node (b) at (0,2) {$v_2$};
    \node (c) at (2,2) {$v_3$};
    \node (d) at (2,0) {$v_4$};

    \draw [->] (a) -- (b)  node[midway, left, draw=none, fill=none] {$w_{12}$};
    \draw [->] (b) -- (c)  node[midway, above, draw=none, fill=none] {$w_{23}$};
    \draw [->] (c) -- (d)  node[midway, right, draw=none, fill=none] {$w_{34}$};
    \draw [->] (d) -- (a)  node[midway, below, draw=none, fill=none] {$w_{41}$};

    \node (a') at (4,0) {$v_1$};
    \node (b') at (4,2) {$v_2$};
    \node (c') at (6,2) {$v_3$};
    \node (d') at (6,0) {$v_4$};

    \draw [->] (a') -- (b')  node[midway, left, draw=none, fill=none] {$w_{12}$};
    \draw [->] (b') -- (c');
    
    \node[ draw=none, fill=none] at (5, 2.35) {$w_{23} - w_3$};
    \draw [->] (c') -- (d')  node[midway, right, draw=none, fill=none] {$w_{34} + w_3$};
    \draw [->] (d') -- (a')  node[midway, below, draw=none, fill=none] {$w_{41}$};
\end{tikzpicture}
\caption{ Two cochains defining the same element of $H^{1}(\Gamma, \underline{\mathbb{V}})$. They are the same by equation \eqref{eq: rule for h1}}.
\label{fig: computation_rule}
\end{figure}

\begin{theorem}\label{Extension-for-sheaves}
    Suppose that $s < n/2$, and let $d = n-s$.
    Let $\Gamma =(V, E)$ be a multi-graph, and suppose $\mathcal{F} = \underline{\mathbb{V}}/\mathcal{S} \in Z_{s, n}(\Gamma)$ such that $h^{1}(\mathcal{F}) = 0$. Suppose that $\Gamma'$ results from applying a $d$-dimensional $k$-extension, with new vertex $v_{*}$, new edges $f_{1} =u_{1}v_{*}, f_{2}=u_{1}'v_{*},\dots, f_{2k +1}=v_{1}v_*, \dots, f_{d+k}=v_{d-k}v_*$,  and deleted edges $e_1=u_1u_1',\dots, e_k=u_ku_k'$. If $\mathcal{F}' = \underline{\mathbb{V}}/ \mathcal{S}'$, is constructed using $\mathcal{S}'$ where 
    \begin{align*}
        \mathcal{S}'(x) &= \mathcal{S}(x) \textup{ if }  x\in V,
    \end{align*}
    and
    \begin{equation*}
    \begin{array}{clr}
        \mathcal{S}'(f_{2j -1}) &= \ker(\alpha_{e_j})  &\textup{for $j \in\{1, \dots, k\}$}\\
        \mathcal{S}'(f_{2j}) &= \ker(\alpha_{e_j})  &\textup{for $j \in\{1, \dots, k\}$}\\
        \mathcal{S}'(f_{j}) &= \ker(\alpha_{j}) &\textup{for some $\alpha_{j} \in \mathbb{V}^{*}$ whenever $j \in\{2k+1, \dots, k + d\}$}
        \end{array}
    \end{equation*}
    such that $\alpha_{e_1}, \dots, \alpha_{e_k}, \alpha_{k+1},\dots,\alpha_{d}$ are linearly independent, and if $$\mathcal{S}'(v_*) = \bigcap_{i=1}^{k}  \ker(\alpha_{e_i}) \cap  \bigcap_{j=k+1}^{d}  \ker(\alpha_{j}),$$ then one has $h^{1}(\mathcal{F}') = 0$ and $\mathcal{F}'\in Z_{s, n}(\Gamma')$. 
\end{theorem}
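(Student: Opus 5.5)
The plan is to prove $h^{1}(\mathcal{F}')=0$ directly from the definition: the coboundary $d'\colon\bigoplus_{x\in V\cup\{v_*\}}\mathcal{F}'(x)\to\bigoplus_{f\in E'}\mathcal{F}'(f)=\mathbb{R}^{E'}$ should be surjective. The idea is to reduce an arbitrary target cochain on $\Gamma'$ to a target cochain on $\Gamma$, solve it there using $h^{1}(\mathcal{F})=0$, and then extend the solution to $v_*$ using the linear independence of the forms. This is the sheaf version of the usual "split the stress on the deleted edge" argument for Henneberg moves. Equivalently, one could phrase the first step with the rule \eqref{eq: rule for h1}: push the two new edge values $f_{2j-1},f_{2j}$ through $v_*$ onto $e_j$.

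The first step is bookkeeping. Since $\dim \mathcal{S}'(v_*)=n-d=s$, the map $w\mapsto(\alpha_{e_1}(w),\dots,\alpha_{e_k}(w),\alpha_{k+1}(w),\dots,\alpha_d(w))$ identifies $\mathcal{F}'(v_*)=\mathbb{V}/\mathcal{S}'(v_*)$ with $\mathbb{R}^{d}$. The restriction maps $r^{v}_{f}$ are $[w]\mapsto\alpha_f(w)$. These are well defined at $v_*$ by the definition of $\mathcal{S}'(v_*)$. They are well defined at $u_j,u_j'$ because $\alpha_{e_j}$ already annihilated $\mathcal{S}(u_j)$ and $\mathcal{S}(u_j')$, as $e_j\in E$.

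I will also check $\mathcal{F}'\in Z_{s,n}(\Gamma')$. The conditions $\alpha_f(\mathcal{S}'(v))=0$ for the edges $f_{2j-1},f_{2j}$ are immediate from the above. For the edges $f_{2k+i}=v_iv_*$ one needs $\alpha_{i}(\mathcal{S}(v_i))=0$, and I read this as part of the choice of $\alpha_i$ (it is forced if the new edge is to be a legitimate edge of the sheaf). The transversality $\mathcal{S}'(v_*)\cap\mathcal{S}(v)=0$ is the one point that is not purely formal. I expect it to be the main obstacle, or at least the place where an implicit genericity hypothesis on the $\alpha_i$ must be invoked, since $\mathcal{S}'(v_*)$ is determined by the forms. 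I would handle it by noting that the forms can be perturbed within their allowed affine families without changing the argument below.

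For surjectivity, fix a target $c\in\mathbb{R}^{E'}$. Orient $f_{2j-1}$ as $u_j\to v_*$ and $f_{2j}$ as $v_*\to u_j'$, and orient $e_j$ as $u_j\to u_j'$. Define a target $\bar c\in\mathbb{R}^{E}$ by $\bar c_e=c_e$ for $e\in E\setminus\{e_1,\dots,e_k\}$ and $\bar c_{e_j}=c_{f_{2j-1}}+c_{f_{2j}}$. Since $h^{1}(\mathcal{F})=0$, there is $(w_v)_{v\in V}$ with $d(w)=\bar c$. Now choose $w_*\in\mathcal{F}'(v_*)\cong\mathbb{R}^{d}$ by prescribing
\begin{equation*}
\alpha_{e_j}(w_*)=\alpha_{e_j}(w_{u_j})+c_{f_{2j-1}},\qquad \alpha_i(w_*)=\alpha_i(w_{v_i})+c_{f_{2k+i}},
\end{equation*}
which is possible precisely because these $d$ forms are linearly independent. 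Then $(w_v)_{v\in V}$ together with $w_*$ hits $c$ on all edges of $E\setminus\{e_j\}$, on every $f_{2j-1}$, and on every $f_{2k+i}$. On $f_{2j}$ one gets $\alpha_{e_j}(w_{u_j'})-\alpha_{e_j}(w_*)=\bar c_{e_j}-c_{f_{2j-1}}=c_{f_{2j}}$. Hence $d'$ is surjective and $h^{1}(\mathcal{F}')=0$.
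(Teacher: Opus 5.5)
\paragraph{Comparison with the paper's proof.} Your proof that $h^{1}(\mathcal{F}')=0$ is correct, and it takes a more direct route than the paper.

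The paper works through the long exact sequence of $0\to\mathcal{S}'\to\underline{\mathbb{V}}\to\mathcal{F}'\to 0$ and shows that $H^{1}(\Gamma',\mathcal{S}')\to H^{1}(\Gamma',\underline{\mathbb{V}})$ is onto. It proceeds in three steps:
\begin{enumerate}
\item It shows that the classes $W_{f_{2k+i}}$ and $W_{f_{2j-1}}+W_{f_{2j}}$ lie in the image, using $\bigcap_{c\neq j}\ker(\alpha_c)+\ker(\alpha_j)=\mathbb{V}$.
\item It lets $N$ be their span.
\item It transports surjectivity on $\Gamma$ through a map $\psi\colon H^{1}(\Gamma,\underline{\mathbb{V}})\to H^{1}(\Gamma',\underline{\mathbb{V}})/N$, whose well-definedness is checked vertex by vertex.
\end{enumerate}

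Your argument is the dual of this, carried out directly on cochains of $\mathcal{F}'$. The merged target $\bar c_{e_j}=c_{f_{2j-1}}+c_{f_{2j}}$ is the counterpart of what $\psi$ does. Solving for $w_*$ from $d$ independent forms replaces the paper's two special cases.

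You avoid the diagram chase entirely, and it becomes visible that only two facts are used: surjectivity of $d$ on $\Gamma$, and independence of the $d$ forms at $v_*$. The paper's version has the advantage of being phrased through sums of the subspaces $\mathcal{S}'(x)$. That is the template it reuses for motion sheaves on $I(\Gamma)$ in the later $k$-extension theorem.

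\paragraph{The membership claim.} Your proposal does not go through for $\mathcal{F}'\in Z_{s,n}(\Gamma')$, but the difficulty lies in the statement itself. You are right on two points:
\begin{itemize}
\item $\alpha_{k+i}(\mathcal{S}(v_i))=0$ has to be imposed, since otherwise $\mathcal{F}'$ is not even a sheaf of this type.
\item Transversality of $\mathcal{S}'(v_*)$ to every $\mathcal{S}(v)$ is not formal.
\end{itemize}
The paper's proof only checks $\dim\mathcal{S}'(v_*)=s$.

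Your repair by perturbing the forms does not work. The $\alpha_{e_j}$ are part of the given $\mathcal{F}$, and when $k=d$ nothing is free at all. Worse, transversality can fail for every admissible choice. Take $n=3$, $s=1$, and let $\Gamma$ have vertices $u_1,u_1',u$ with the single edge $e_1=u_1u_1'$. Let $\mathcal{S}(u_1),\mathcal{S}(u_1'),\mathcal{S}(u)$ be three distinct lines in a plane $P$, and let $\ker(\alpha_{e_1})=P$. Then $\mathcal{F}\in Z_{1,3}(\Gamma)$ and $h^{1}(\mathcal{F})=0$. For the $1$-extension with $v_1=u$, both $\alpha_{e_1}$ and the admissible $\alpha_2$ vanish on $\mathcal{S}(u)$. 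This forces $\mathcal{S}'(v_*)=\mathcal{S}(u)$.

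So $h^{1}(\mathcal{F}')=0$ holds unconditionally: neither your argument nor the paper's uses transversality. Membership in $Z_{s,n}(\Gamma')$, however, is an extra genericity condition. It must be secured separately when the theorem is applied in the induction for associated sheaves.
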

\begin{proof}
 We will use the notation $\alpha_{i} = \alpha_{e_i}$ whenever $i\leq k$. Let us first note that defining $\mathcal{S}'(v_*) = \bigcap_{j=1}^{d}  \ker(\alpha_{j})$ yields a sheaf $\mathcal{F}' \in Z_{s, n}(\Gamma')$ since $\dim( \bigcap_{j=1}^{d} \ker(\alpha_{j})) = n - d = s$. We orient all edges towards the vertex $v_*$, and orient all edges $u_i'u_i$ from $u_i'$ towards $u_i$.

One has the long exact sequence from the short exact sequence \eqref{eq: les-ass}
\begin{equation*}
    \begin{array}{cl}
    0 \rightarrow & H^0(\Gamma', \mathcal{S}') \rightarrow H^0(\Gamma', \underline{\mathbb{V}}) \rightarrow H^0(\Gamma', \mathcal{F}') \\[10pt]
    &\xrightarrow{\delta} H^1(\Gamma', \mathcal{S}') \rightarrow H^1(\Gamma', \underline{\mathbb{V}}) \rightarrow H^1(\Gamma', \mathcal{F}') \rightarrow 0.
\end{array}
\end{equation*}
Thus, $H^{1}(\Gamma', \mathcal{F}') = 0$ if and only if $H^1(\Gamma', \mathcal{S}') \rightarrow H^1(\Gamma', \underline{\mathbb{V}})$ is surjective, and one has the same for the graph $\Gamma$. We will show that this map is surjective. We thus seek to find $ (w_{e})_{e \in E} \in H^1(\Gamma', \mathcal{S}')$ that map onto an arbitrarily given element of $H^{1}( \Gamma', \underline{\mathbb{V}})$. Up to $\im(d)$, elements of $H^{1}( \Gamma', \mathcal{S}')$ are given by formal sums $\sum_{e\in E(\Gamma')} s_e$, where $s_e\in \mathcal{S}'(e)$.
We will first treat two special cases, which we will then use together with the surjectivity of $H^{1}(\Gamma, \mathcal{S}) \rightarrow H^{1}(\Gamma, \underline{\mathbb{V}})$, to show that $H^{1}(\Gamma', \mathcal{S}') \rightarrow H^{1}(\Gamma', \underline{\mathbb{V}})$ is surjective.\\
We consider first cochains that are nonzero only on an added edge $f_{j}$ with $j > 2k$, and equal to $W$ on this edge. We denote the cochain by $W_{f_{j}}$. Since the elements $\alpha_{1}, \dots, \alpha_{k}, \alpha_{k+1} , \dots, \alpha_{d}$ are linearly independent, we have $\bigcap_{c \neq j-k} \ker(\alpha_c) + \ker(\alpha_{j-k}) = \mathbb{V}$, where $\alpha_{j-k}$ corresponds to the edge $f_j$. Thus there exist elements $w_{*} \in \bigcap_{c \neq j-k} \ker(\alpha_c)$ and a $w_{j-k} \in \ker(\alpha_{j-k})$ such that 
\begin{equation*}
    w_{*}  + w_{j-k} = W_{f_j}. 
\end{equation*}
  Now, consider the cochain defined by (see also \Cref{fig: proof_case_1}):
\begin{equation*}
\begin{array}{clcc}
    w_{f_{c}} &= - w_{*}  & \textup{if } c \in \{1, \dots, d+k\} \setminus \{j\} \\
    w_{f_{j}} &= w_{j-k}  & \\
    w_{e} &=0  &\textup{ otherwise.}
\end{array}
\end{equation*}
It is clear that this cochain defines an element of $H^{1}( \Gamma', \mathcal{S}')$. Moreover, one can see by using equation \eqref{eq: rule for h1} at $v_*$ with $w_{*}$, that it is equivalent to $W_{f_j}$, which is precisely what we needed to show.
\begin{figure}[h!]
\centering
\begin{tikzpicture}
\tikzstyle{vertex}=[circle, draw, fill=white, minimum size=16pt, inner sep=1pt, outer sep=0pt]
\tikzstyle{dot}=[fill=black, circle, inner sep=1.5pt]
\draw (0,0) ellipse (6cm and 3.5cm);
\node[vertex] (vstar) at (0,1.25)  {$v_*$};
\node[vertex] (v1) at (-3.5,2)  {$u_1$};
\node[vertex] (v2) at (-3.5,.5)  {$u_1'$};
\node[vertex] (v3) at (3.5,2)  {$u_2$};
\node[vertex] (v4) at (3.5,.5)  {$u_2'$};
\node[vertex] (v5) at (-1, -2)  {$v_1$};
\node[vertex] (v6) at (1, -2)  {$v_2$};

\draw[dashed]  (v1) -- (v2)  node[midway, left, draw=none, fill=none] {$e_1$};
\draw[dashed]  (v3) -- (v4)  node[midway, left, draw=none, fill=none] {$e_2$};
\draw[->] (v1) -- (vstar) node[midway, above, draw=none, fill=none] {$-w_*$};
\draw[->] (v2) -- (vstar) node[midway, below, draw=none, fill=none] {$-w_*$};
\draw[->] (v3) -- (vstar) node[midway, above, draw=none, fill=none] {$-w_*$};
\draw[->] (v4) -- (vstar) node[midway, below, draw=none, fill=none] {$-w_*$};
\draw[->] (v5) -- (vstar) node[midway, left, draw=none, fill=none] {$-w_*$};
\draw[->] (v6) -- (vstar) node[midway, right, draw=none, fill=none] {$w_4$};
\end{tikzpicture}
\caption{An illustration of the cochain for the first case for the $4$-dimensional $2$-extension. Edges not involved in the $2$-extension are not pictured. The edges $e_1$ and $e_2$ are replaced, and edges $f_1,\dots, f_6$ are added. The picture illustrates the construction of the cochain equal to $W_{f_{6}}$, where $f_6$ is the edge between $v_2$ and $v_*$.}
\label{fig: proof_case_1}
\end{figure}
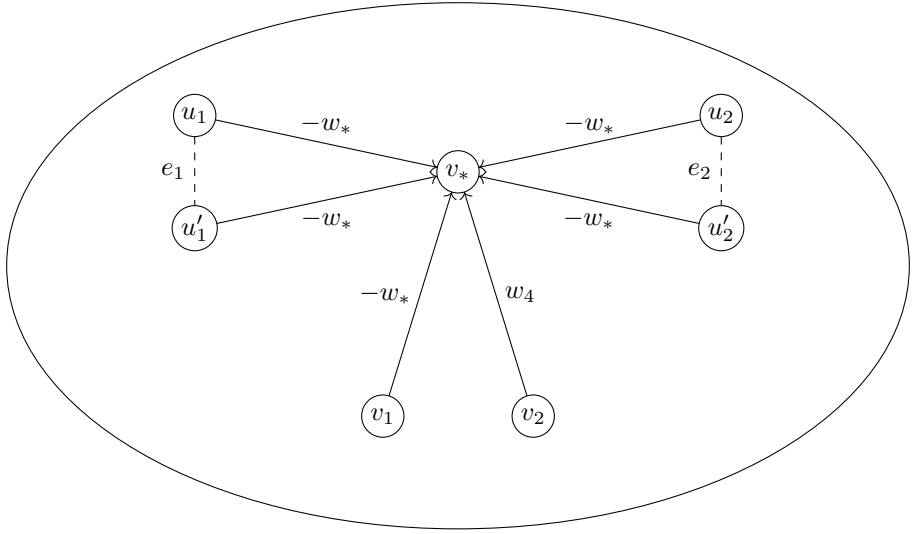

We now consider elements of the form $W_{f_{2j -1}} + W_{f_{2j}}$ with $j\leq k$, where $f_{2j-1},f_{2j}$ are given by $f_{2j-1} = v_*u_j, f_{2j} = v_*u_j'$, and they replace the edge $e_{j}$. By the linear independence assumption, we have $\bigcap_{c \neq j} \ker(\alpha_c) + \ker(\alpha_j) = \mathbb{V}$, and thus there exist elements $w_{*} \in \bigcap_{c \neq j} \ker(\alpha_c)$ and a $w_j \in \ker(\alpha_j)$ such that $w_* + w_j  = W$. One can then define a cochain
\begin{equation*}
\begin{array}{clcc}
    w_{f_{i}} &= - w_{*}  & \textup{if } i\in \{1, \dots, d+k\} \setminus \{2j, 2j-1\}\\
    w_{f_{2j-1}} &= w_{j}  & \\
    w_{f_{2j}} &= w_{j}  & \\
    w_{e} &=0  &\textup{ otherwise.}
\end{array}
\end{equation*}
 Again this cochain belongs to $H^{1}(\Gamma', \mathcal{S}')$, and using Equation \eqref{eq: rule for h1}, one can show that it maps onto $W_{f_{2j-1}} + W_{f_{2j}}$. See also the illustration in \Cref{fig: proof_case_2}.\\

\begin{figure}[h!]
\centering
\begin{tikzpicture}
\tikzstyle{vertex}=[circle, draw, fill=white, minimum size=16pt, inner sep=1pt, outer sep=0pt]
\tikzstyle{dot}=[fill=black, circle, inner sep=1.5pt]
\draw (0,0) ellipse (6cm and 3.5cm);
\node[vertex] (vstar) at (0,1.25)  {$v_*$};
\node[vertex] (v1) at (-3.5,2)  {$u_1$};
\node[vertex] (v2) at (-3.5,.5)  {$u_1'$};
\node[vertex] (v3) at (3.5,2)  {$u_2$};
\node[vertex] (v4) at (3.5,.5)  {$u_2'$};
\node[vertex] (v5) at (-1, -2)  {$v_1$};
\node[vertex] (v6) at (1, -2)  {$v_2$};

\draw[dashed]  (v1) -- (v2)  node[midway, left, draw=none, fill=none] {$e_1$};
\draw[dashed]  (v3) -- (v4)  node[midway, left, draw=none, fill=none] {$e_2$};
\draw[->] (v1) -- (vstar) node[midway, above, draw=none, fill=none] {$w_1$};
\draw[->] (v2) -- (vstar) node[midway, below, draw=none, fill=none] {$w_1$};
\draw[->] (v3) -- (vstar) node[midway, above, draw=none, fill=none] {$-w_*$};
\draw[->] (v4) -- (vstar) node[midway, below, draw=none, fill=none] {$-w_*$};
\draw[->] (v5) -- (vstar) node[midway, left, draw=none, fill=none] {$-w_*$};
\draw[->] (v6) -- (vstar) node[midway, right, draw=none, fill=none] {$-w_*$};
\end{tikzpicture}
\caption{An illustration of the cochain for the second case for the $4$-dimensional $2$-extension. The picture illustrates the construction of the cochain equal to $W_{f_1} + W_{f_2}$.}
\label{fig: proof_case_2}
\end{figure}
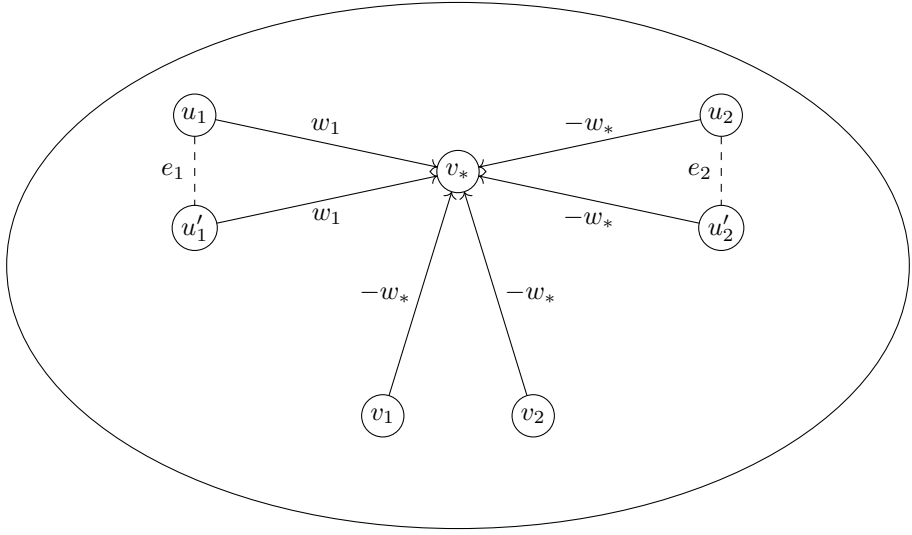
To complete the proof, we will use the independence of $\mathcal{F}$ on $\Gamma$ and the two cases that we have considered to show that the map is surjective in general. Let $N$ be the vector subspace of $H^{1}(\Gamma', \underline{\mathbb{V}})$ generated by 
\begin{equation*}
    W_{f_{i}}, W_{f_{2j-1}} + W_{f_{2j}},
\end{equation*}
  where $W\in \mathbb{V}$ and $j\in \{1, \dots, k\}$ and $i \in \{2k+1, \dots k + d\}$. From the above two cases, we know that
  \begin{equation*}
  N \subseteq \im(H^{1}(\Gamma', \mathcal{S}') \rightarrow H^{1}(\Gamma', \underline{\mathbb{V}})).
  \end{equation*}

Define a linear map
\begin{equation*}
\begin{array}{cl}
   \varphi: & \oplus_{e \in E(\Gamma)}  \mathcal{S}(e) \rightarrow \oplus_{e \in E(\Gamma')} \mathcal{S}'(e):\\
   
   &\begin{cases}
    W_{e} \mapsto W_{e} &\text{ if $e\in E(\Gamma) \cap E(\Gamma') $}\\
   W_{e_i} \mapsto W_{f_{2i - 1}} &\text{ if $e_i=u_iu_{i}'$ is a deleted edge},\\
   \end{cases}
\end{array}
\end{equation*}
and by extending linearly to all other vectors. This is clearly well defined. We define another linear map
\begin{equation*}
\begin{array}{cl}
   \psi: &H^{1}(\Gamma, \underline{\mathbb{V}}) \rightarrow H^{1}(\Gamma', \underline{\mathbb{V}})/N:\\
   &\begin{cases}
    W_{e} \mapsto W_{e} &\text{ if $e \in E(\Gamma) \cap E(\Gamma')$}\\
    W_{e} \mapsto W_{f_{2i -1}} &\text{ if $e_i=u_iu_{i}'$ is a deleted edge,}
    \end{cases}
\end{array}
\end{equation*}
and by extending linearly to all other vectors.
We need to check that $\psi$ is well-defined. By \eqref{eq: rule for h1}, it suffices to check that for each vertex $v$, if $\sum_{e\in N(v)} A_{e} = \sum_{e\in N^{+}(v)} (A_{e} + H)+ \sum_{e\in N^{-}(v)} (A_{e} - H)$ in $H^{1}(\Gamma, \underline{\mathbb{V}})$, then
\begin{equation*}
    \psi \left(\sum_{e\in N(v)} A_{e}\right) = \psi\left( \sum_{e\in N^{+}(v)} (A_{e} + H)+ \sum_{e\in N^{-}(v)} (A_{e} - H)\right)
\end{equation*}
in $H^{1}(\Gamma', \underline{\mathbb{V}})/N$. 
Thus, let $v$ be a vertex and suppose that $v$ is the terminal vertex of $e_{i_1}, \dots e_{i_{a}}$ and the initial vertex of $e_{i_{a+1}}, \dots e_{i_{b}}$ for some deleted edges $e_{i_1}, \dots e_{i_b}$, and that additionally, the edges $f_{j_1}, \dots, f_{j_c}$ are adjacent to $v$ (with $j_i \geq 2k$ for $i\in \{1, \dots, c\}$). Denote $S= \{e_{i_1}, \dots e_{i_{b}}, f_{j_1},\dots, f_{j_c} \}$. 
We have
\begin{align*}
\psi(\left(\sum_{e\in N_{\Gamma'}(v)} A_{e}\right) ) &= \sum_{p = 1}^{a} A_{f_{2  i_{p} - 1}} + \sum_{p = a+1}^{b} A_{f_{2  i_{p} - 1}} + \sum_{e\in N_{\Gamma'}^{+}(v) \setminus S}  A_{e} + \sum_{e\in N_{\Gamma'}^{-}(v) \setminus S}  A_{e} \\
&= \sum_{p = 1}^{a} (A_{f_{2  i_{p} - 1}} - H) - \sum_{p = a+1}^{b} (A_{f_{2  i_{p}}} +H) + \sum_{f_{j_q}}( - H)\\
&+ \sum_{e\in N_{\Gamma'}^{+}(v) \setminus S}  (A_{e} + H)+ \sum_{e\in N_{\Gamma'}^{-}(v) \setminus S}  (A_{e} -H)\\
&= \sum_{p = 1}^{a} (A_{f_{2  i_{p} - 1}} - H) + \sum_{p = a+1}^{b} (A_{f_{2  i_{p} -1}} + H) + \psi(\sum_{e\in N_{\Gamma'}(v) \setminus S}  (A_{e} \pm H)) \\
&=  \psi\left( \left(\sum_{e\in N_{\Gamma'}^{+}(v)} A_{e} + H\right) + \left(\sum_{e\in N_{\Gamma'}^{-}(v)} A_{e} - H\right)\right)
\end{align*}

We have used the first case above for the terms $H$ supported on $f_{j_{q}}$, since these belong to $N.$ We used $W_{f_{2j}} = -W_{f_{2j -1}} \textup{mod } N$, which follows from the second case, in the second and third equality. The map is thus well-defined. The map $\psi$ is surjective since, for any edge $e$, either $W_{e}$ obviously lies in the image, or $W_{e} \in N$, or $e=f_{2i}$ with $i\leq k$, in which case $W_{f_{2i}} = -W_{f_{2i -1}}$ clearly lies in the image.

Now note that the following diagram commutes:
\begin{figure}[h!]
    \centering
\begin{tikzcd}
\oplus_{e\in E(\Gamma)} \mathcal{S}(e) \arrow[r, "\varphi"] \arrow[d, "\textup{ mod } d"] & \oplus_{e \in E(\Gamma')} \mathcal{S}'(e) \arrow[d, " \textup{ mod } d"]  \\
H^{1}(\Gamma, \mathcal{S})  \arrow[d] & H^{1}(\Gamma', \mathcal{S}') \arrow[d] 
\\
{H^{1}(\Gamma, \underline{\mathbb{V}})} \arrow[r, "\psi"] & {H^{1}(\Gamma', \underline{\mathbb{V}})/N}
\end{tikzcd}
\end{figure}

The map $H^{1}(\Gamma, \mathcal{S})\rightarrow H^{1}(\Gamma, \underline{\mathbb{V}})$ is surjective by our assumption since $H^{1}(\Gamma, \mathcal{F}) = 0$ and by the long exact sequence associated to \eqref{eq: les-ass}. Thus, the composite map $\oplus_{e \in E} \mathcal{S}(e) \rightarrow H^{1}(\Gamma', \underline{\mathbb{V}})/N$ (going the bottom-left path in the diagram) is surjective as well since all maps involved are surjective. Thus, the map $H^{1}(\Gamma', \mathcal{S}') \rightarrow  H^{1}(\Gamma', \underline{\mathbb{V}})/ N$ must be surjective as well. By the two special cases, we see that $N$ is in the image of the map $H^{1}(\Gamma, \mathcal{S}) \rightarrow H^{1}(\Gamma', \underline{\mathbb{V}})$. Then, we see that the map $H^{1}(\Gamma', \mathcal{S}') \rightarrow H^{1}(\Gamma', \underline{\mathbb{V}})$ is surjective, completing the proof of the theorem.
\end{proof}

\subsection{Proof of \Cref{thm: main theorem}}
We need a lemma that states that one can choose the $\{\alpha_e\}$ to be linearly independent to apply the $k$-extension from \Cref{Extension-for-sheaves}.
\begin{lemma}\label{lem: dense-open -> general position}
    Let $s, n\in \mathbb{N}$ with $s <n/2$, and let $\Gamma=(V, E)$ be a multi-graph with at most $n - 2s$ edges between each pair of vertices. Let $\psi_{\Gamma}: Z_{s, n}(\Gamma) \rightarrow (\mathbb{V}^{*})^{|E|}: ((S_{v})_{v\in V}, \alpha_{e}) \mapsto (\alpha_{e})_{e\in E}$. For any nonempty Zariski-open subset $O\subseteq Z_{s,n}(\Gamma)$ and any subset $E' \subseteq E$ of size $|E'| \leq  n-s$ such that $|E' \cap N(v)| \leq n - 2s$, there exists $( \alpha_{e})_{e\in E} \in \psi_{\Gamma}(O)$ such that $\{\alpha_{e}\}_{e\in E'}$ is linearly independent. 
\end{lemma}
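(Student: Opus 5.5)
The plan is to show that the set of points of $Z_{s,n}(\Gamma)$ at which $\{\alpha_e\}_{e\in E'}$ is linearly independent is a nonempty Zariski open subset, and then to use irreducibility of $Z_{s,n}(\Gamma)$ from \Cref{lem: Properties map Z -> M}.

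\textbf{Step 1: openness.} Linear independence of $\{\alpha_e\}_{e\in E'}$ means that the $|E'|\times n$ matrix with rows $\alpha_e$ has rank $|E'|$. By \Cref{prop: Rank-lower semicont}, applied with the coordinates of $\alpha_e$ as regular functions on $Z_{s,n}(\Gamma)$, the set
\[
L = \{ x\in Z_{s,n}(\Gamma) ~\vert~ \{\alpha_e\}_{e\in E'} \textup{ linearly independent}\}
\]
is Zariski open.

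\textbf{Step 2: intersection with $O$.} Since $Z_{s,n}(\Gamma)$ is irreducible, any two nonempty Zariski open subsets meet. So once $L\neq\emptyset$, we get $O\cap L\neq \emptyset$, and any point of $O\cap L$ gives the required $(\alpha_e)_{e\in E}\in\psi_\Gamma(O)$. This reduces the lemma to producing one point of $Z_{s,n}(\Gamma)$ with $\{\alpha_e\}_{e\in E'}$ independent.

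\textbf{Step 3: constructing a point, the main obstacle.} We must choose $s$-dimensional subspaces $S_v$ with pairwise trivial intersections, and forms $\alpha_e$ with $\alpha_e\in \textup{Ann}(S_u+S_v)$ for $e=uv$, so that the forms on $E'$ are independent. Forms on $E\setminus E'$ can be set to $0$, which is allowed in $Z_{s,n}$.

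I would first choose the $S_v$ generically, so that any collection of them is in general position. For each edge $e=uv\in E'$, $\textup{Ann}(S_u+S_v)$ is an $(n-2s)$-dimensional subspace $A_{uv}$ of $\mathbb{V}^*$. It then suffices to pick independent vectors $\alpha_e\in A_{uv}$. A Rado/Hall-type criterion applies here: independent representatives exist if and only if, for every subset $F\subseteq E'$, $\dim \sum_{e\in F} A_e \geq |F|$.

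For generic $S_v$, and with $V(F)$ the vertex set spanned by $F$, I would verify that $\sum_{e\in F} A_e$ is large. Take its annihilator, which is $\bigcap_{e\in F}(S_u+S_v)$. Using the bounds $|F|\le n-s$ and at most $n-2s$ edges of $E'$ at each vertex, I would show this intersection is small enough that $\dim\sum_{e\in F}A_e\ge |F|$.

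A cleaner alternative, which I would try first, avoids generic dimension counts. Fix a vertex $v_0$ incident to some edges of $E'$. All those edges have annihilators inside $\textup{Ann}(S_{v_0})$, which has dimension $n-s\ge |E'|$. So in the special case where every edge of $E'$ contains one common vertex, choose the remaining $S_u$ so that the $A_{uv_0}$ are independent subspaces of $\textup{Ann}(S_{v_0})$, each of dimension $n-2s$. The degree bound $|E'\cap N(v)|\le n-2s$ then lets us pick the required number of independent forms in each.

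For general $E'$ I would argue by induction on $|E'|$. Add one edge $e=uv$ at a time, and choose $\alpha_e\in A_{uv}$ outside the span $W$ of the forms already chosen. This is possible unless $A_{uv}\subseteq W$, and I would rule that out by perturbing $S_u,S_v$ generically, using openness again. This inductive general-position argument is the step I expect to be the hardest to make fully rigorous.
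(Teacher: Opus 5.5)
Your Steps 1 and 2 match the paper's reduction exactly. Linear dependence of $\{\alpha_e\}_{e\in E'}$ is a Zariski-closed condition and $Z_{s,n}(\Gamma)$ is irreducible, so it suffices to exhibit one point of $Z_{s,n}(\Gamma)$ at which $\{\alpha_e\}_{e\in E'}$ is independent.

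The gap is Step 3. It is the entire content of the lemma, and you do not carry it out.
\begin{itemize}
\item \emph{The Rado route.} The claim that for generic $S_v$ one has $\dim\bigcap_{uv\in F}(S_u+S_v)\le n-|F|$ for all $F\subseteq E'$ is asserted, not proved. It is not a routine codimension count. The spaces $S_u+S_v$ share summands: for instance $(S_u+S_v)\cap(S_u+S_w)\supseteq S_u$ always, which exceeds the naive expected dimension $\max(0,4s-n)$ whenever $s\ge 1$ and $n>3s$. So codimensions do not add, and the hypotheses on $E'$ would have to enter through a genuine argument. The most natural way to certify the generic Rado condition is to exhibit one good configuration, which is the lemma itself.
\item \emph{The star case.} This is fine, but it is a special case.
\item \emph{The greedy induction.} This fails as stated. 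When $\textup{Ann}(S_u+S_v)\subseteq W$, you cannot perturb $S_u$ and $S_v$ generically, because the forms already chosen on earlier edges of $E'$ at $u$ or $v$ must keep vanishing on $S_u$ and $S_v$. A free perturbation leaves $Z_{s,n}(\Gamma)$. A constrained one, inside $\bigcap\ker(\alpha_{e'})$, needs exactly the dimension and intersection control you have not supplied, and it must also preserve $S_u\cap S_w=\{0\}$ for all other vertices $w$.
\end{itemize}

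The missing idea is to reverse the order of the choices, as the paper does.
\begin{itemize}
\item First pick any linearly independent forms $\alpha_e$, $e\in E'$, which is possible since $|E'|\le n-s\le n$, and set $\alpha_e=0$ for $e\notin E'$.
\item For each vertex put $D_v=\bigcap_{e\in E'\cap N(v)}\ker(\alpha_e)$. Since the forms are independent and at most $n-2s$ of them are incident to $v$, we get $\dim D_v\ge 2s$.
\item Any $S_v\in\textup{Gr}(s,D_v)$ then automatically satisfies $\alpha_e(S_v)=0$ whenever $v\in e$. Only the condition $S_u\cap S_v=\{0\}$ remains.
\item Choose the $S_v$ one vertex at a time. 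Because $S_{v_{m+1}}\subseteq D_{v_{m+1}}$, we have $S_{v_{m+1}}\cap S_{v_i}=S_{v_{m+1}}\cap(S_{v_i}\cap D_{v_{m+1}})$. A generic $s$-dimensional subspace of a space of dimension at least $2s$ meets a fixed subspace of dimension at most $s$ trivially.
\item Intersecting these finitely many nonempty Zariski open subsets of the irreducible $\textup{Gr}(s,D_{v_{m+1}})$ gives an admissible choice.
\end{itemize}
This produces the required point of $Z_{s,n}(\Gamma)$ with no general-position analysis of the sums $S_u+S_v$ at all.
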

\begin{proof}
We note that $\{\alpha_{e}\}_{e\in E'}$ being linearly dependent defines a polynomial condition on points in $O$. Thus, if there exists any $x\in Z_{s, n}(\Gamma)$ such that the statement is true, then there exists an open subset $U \subseteq Z_{s,n}(\Gamma)$ such that the statement is true; and since $U\cap O$ is necessarily nonempty as $Z_{s, n}$ is irreducible, the statement follows. 

Let $E'=\{e_1, \dots e_{a}\}$, and suppose that $\alpha_{e_1},\dots, \alpha_{e_a}\in \mathbb{V}^{*}$ are linearly independent forms. It then suffices to show that we can pick $S_{v} \subseteq \bigcap_{e\in N(v) \cap E'}\ker(\alpha_{e})$ of dimension $s$ such that $S_{v} \cap S_{u} = \{0\}$ for all pairs $u,v\in V$, since this allows one to define a point $x\in Z_{s,n}(\Gamma)$ as desired. To ease notation, let us denote $D_v= \bigcap_{e\in N(v) \cap E'}\ker(\alpha_{e})$. Suppose by induction we have picked the coordinates for $S_{v_1}, \dots, S_{v_m}$. For $v_1$, one simply picks $S_{v_1}\subseteq D_{v_1}$ to be some arbitrary subspace of dimension $s$. Since $\dim(D_{v_1}) \geq n- (n-2s) = 2s$, this is possible. \\
Since $\dim(S_{v_i}) = s$, for all $i\leq m+1$ it holds that $\dim\left(S_{v_i} \cap D_{v_{m+1}}\right) \leq s$ and moreover one has $\dim\left(D_{v_{m+1}}\right) \geq n-(n-2s) =2s$. One can then apply \Cref{lem: Intersections_of_subspaces} in $\textup{Gr}(s , D_{v_{m+1}})$, after perhaps enlarging $S_{v_i} \cap D_{v_{m+1}}$ to be an $s$-dimensional subspace of $D_{v_{m+1}}$. Thus, for each $S_{v_i}$, there is a Zariski open subset of choices for $S_{v_{m+1}}\in \textup{Gr}(s, D_{v_{m+1}})$ such that $S_{v_{m+1}} \cap S_{v_i} = 0$. Intersecting these finitely many open subsets yields a non-empty open subset, as $\textup{Gr}(s, D_{v_{m+1}})$ is irreducible, and hence one can make a choice of $S_{v_{m+1}}$ such that the statement holds.
\end{proof}

In the following theorem, $K_{2}^{n-2}$ denotes the graph with two vertices and $n-2$ parallel edges. 
\begin{theorem}[Frank, Szegö \cite{frank2003constructive}]\label{thm: FrankSzegö}
Let $\Gamma=(V, E)$ be a $(n-1, n)$-tight multi graph. Then, $\Gamma$ can be constructed from $K_{2}^{n-2}$, using $(n-1)$-dimensional $k$-extensions, where $0\leq k\leq n-1$, such that no more than $(n-2)$ parallel edges are created at each pair of vertices. 
\end{theorem}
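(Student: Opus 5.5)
Since \Cref{thm: FrankSzegö} is quoted from \cite{frank2003constructive}, I would reprove it by reverse induction on $|V|$. The plan is to show that every $(n-1,n)$-tight multigraph with at least three vertices has a vertex $v_*$ whose removal, followed by adding $k$ suitable edges among its neighbours, yields a smaller $(n-1,n)$-tight multigraph. Reading this backwards gives an $(n-1)$-dimensional $k$-extension. Throughout I write $d=n-1$ and $\ell=n$, and call a vertex set $X$ \emph{tight} if $|E[X]| = d|X|-\ell$.

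\textbf{Step 1: the base case and the multiplicity condition are automatic.} For any two vertices $u,v$, sparsity gives $|E[\{u,v\}]|\le 2d-\ell=n-2$. So every $(n-1,n)$-sparse multigraph has at most $n-2$ parallel edges, and the multiplicity clause costs nothing once sparsity is preserved. A tight graph on two vertices has exactly $2(n-1)-n=n-2$ edges, so it is $K_2^{n-2}$.

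\textbf{Step 2: finding a vertex of the right degree.} Let $|V|\ge 3$. Applying sparsity to $V\setminus\{v\}$ shows $\deg(v)\ge d|V|-\ell-(d(|V|-1)-\ell)=d$. The degree sum is $2(d|V|-\ell)<2d|V|$, so some vertex $v_*$ has $\deg(v_*)=d+k$ with $0\le k\le d-1$. If $k=0$, deleting $v_*$ leaves a graph with $d(|V|-1)-\ell$ edges, and it is sparse because it is a subgraph of a sparse graph. So the reverse $0$-extension works directly.

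\textbf{Step 3: splitting off when $k\ge 1$ (the main obstacle).} Delete $v_*$ and insert $k$ new edges $u_ju_j'$, where the pairs use $2k$ of the $d+k$ edge-ends at $v_*$ and the remaining $d-k$ ends are discarded. The edge count comes out right: $(d|V|-\ell)-(d+k)+k=d(|V|-1)-\ell$. What must be controlled is sparsity. Adding an edge $xy$ to a sparse graph $G-v_*$ keeps it sparse exactly when $x$ and $y$ do not lie in a common tight set of the current graph.

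I would add the $k$ edges one at a time and use the standard submodularity of $X\mapsto d|X|-\ell-|E[X]|$. It shows that two tight sets meeting in at least two vertices have tight union and tight intersection. Hence the maximal tight sets of $G-v_*$ pairwise share at most one vertex.

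Suppose, for contradiction, that every pair of available neighbours is blocked. Then each pair of remaining neighbour-ends lies in a common maximal tight set $T_i$ with $v_*\notin T_i$. I would then count edges of $G[T_1\cup\dots\cup T_m\cup\{v_*\}]$, using the at-most-one-vertex overlaps and the fact that $v_*$ still sends many edges into $\bigcup T_i$. The aim is to show this set violates $(d,\ell)$-sparsity in $G$, which is a contradiction.

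The delicate case is a single tight set $T$ containing all neighbours. There $|E[T\cup\{v_*\}]|\le d|T|+d-\ell$ forces the number of edges from $v_*$ into $T$, counting edges added so far, to be at most $d$. This is the point where the bookkeeping of which ends have already been used matters. The counting must show that while fewer than $k$ edges have been added, at least two unused ends remain unblocked.

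I expect this counting of blocking tight sets to be the core difficulty. If a naive greedy choice fails, I would follow Frank--Szegö and choose the pairing globally, via an orientation or matroid-intersection argument, instead of one edge at a time.
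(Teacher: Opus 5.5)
\textbf{There is a genuine gap in Step 3.} Your Steps 1 and 2 are correct. Sparsity on two-vertex sets gives both the multiplicity bound and the base case $K_2^{n-2}$. The degree count produces a vertex $v_*$ of degree $d+k$ with $0\le k\le d-1$, and the reverse $0$-extension is immediate. But the whole content of the Frank--Szeg\H{o} theorem is Step 3: showing that $2k$ of the $d+k$ ends at $v_*$ can be paired into $k$ loop-free edges so that $G-v_*$ plus these edges is still $(d,d+1)$-sparse. You do not prove this.

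Moreover, the contradiction you aim for in the one-edge-at-a-time procedure need not exist. A tight set blocking all remaining pairs need not give a sparsity violation in $G$, because the edges added earlier may have used up exactly the ends of $v_*$ lying outside it. Concretely, let $n=4$ (so $d=3$, $\ell=4$) and take the multigraph on $\{z,t_1,t_2,x,y\}$ with edges $t_1t_2$ (twice), $zt_1$ (twice), $zt_2$, $zx$, $zy$, $xt_1$, $xt_2$, $yt_1$, $yt_2$.
\begin{itemize}
\item It has $11=3\cdot 5-4$ edges, is $(3,4)$-tight (one checks every vertex subset), and $\deg(z)=5=d+2$.
\item In $G-z$ the only tight set is $T=\{t_1,t_2\}$, so $xy$ is an admissible first edge.
\item After adding $xy$, the unused ends of $z$ sit at $t_1,t_1,t_2$. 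The only loop-free pair $t_1t_2$ is blocked by $T$.
\item Yet $T\cup\{z\}$ spans $5=3\cdot 3-4$ edges, so nothing is violated. Your bound on the number of edges from $v_*$ into $T$ holds with equality, since $z$ sends exactly $d=3$ edges into $T$.
\end{itemize}
The procedure is simply stuck. Adding $xt_1$ and $yt_1$ instead (discarding the end at $t_2$) does give a $(3,4)$-tight graph, so the reduction exists but your greedy choice missed it. Your Step 2 allows any vertex of degree $d+k$ and Step 3 any admissible order, so the argument as stated must handle this situation, and it does not.

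\textbf{What is missing.} You need a rule, or a global argument, that chooses the pairing while keeping track of the sets $T\not\ni v_*$ for which $T\cup\{v_*\}$ is tight in $G$. Your sketch supplies neither, and the fallback ``follow Frank--Szeg\H{o}'' is a citation rather than a proof. That is in fact all the paper does: it does not reprove this theorem but imports it from \cite{frank2003constructive} as a black box for the induction in \Cref{thm: main_thm_ass_sheaves}. As written, your proposal establishes only the base case and the $k=0$ reductions.
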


Szegö conjectured the following strengthening of \Cref{thm: FrankSzegö}.
\begin{conjecture}[Szegö \cite{SZEGO20061211}]\label{Szegö conjecture}
Let $\Gamma=(V, E)$ be a $(d, \ell)$-tight multi graph with $\ell < \frac{4d+2}{3}$. Then, $\Gamma$ can be constructed from $K_{2}^{2k-\ell}$, using $d$-dimensional $k$-extensions such that no more than $2d - \ell$ parallel edges are created at each pair of vertices. 
\end{conjecture}

\begin{theorem}\label{thm: main_thm_ass_sheaves}
    Let $n\geq 3$, and let $\Gamma$ be a $(n-1, n)-$sparse multigraph. Then $h^{1}(\Gamma, \mathcal{F}) = 0$ for a Zariski open subset of $\mathcal{F} \in Z_{1, n}(\Gamma)$. 
\end{theorem}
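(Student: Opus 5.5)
We argue by induction using the Frank--Szeg\H{o} construction (\Cref{thm: FrankSzegö}), combined with the extension result \Cref{Extension-for-sheaves} (with $s=1$, $d=n-1$) and the genericity statement \Cref{thm: Genericity_F}.

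\textbf{Reduction to tight graphs.} By \Cref{thm: Genericity_F}, $h^{1}$ is upper semi-continuous on $Z_{1,n}(\Gamma)$, and $Z_{1,n}(\Gamma)$ is irreducible by \Cref{lem: Properties map Z -> M}. It therefore suffices to exhibit a single $\mathcal{F}\in Z_{1,n}(\Gamma)$ with $h^{1}(\mathcal{F})=0$; the set where $h^{1}=0$ is then a nonempty Zariski open subset.

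Next we pass from sparse to tight graphs. A $(n-1,n)$-sparse multigraph with at most $n-2$ parallel edges per pair (which sparsity forces) can be completed, by adding edges and possibly vertices, to a $(n-1,n)$-tight multigraph $\hat\Gamma\supseteq\Gamma$. This uses the matroidal nature of $(k,\ell)$-sparsity with $\ell<2k$. If $h^{1}(\hat\Gamma,\hat{\mathcal{F}})=0$, then \Cref{lem: Stresses extend} gives $h^{1}=0$ for the restriction to $\Gamma$. This restriction is a point of $Z_{1,n}(\Gamma)$, since deleting edges and vertices keeps the defining conditions. So we may assume $\Gamma$ is tight.

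\textbf{Base case and induction.} For the base case $K_2^{n-2}$, choose distinct lines $S_{u},S_{v}$ and let $\alpha_1,\dots,\alpha_{n-2}$ be a basis of $\textup{Ann}(S_u+S_v)$. By \Cref{prop: construction-F-tilde}, the cohomology equals that of the motion sheaf on a single edge, and $H^{1}=0$ there: indeed $\textup{im}(d)$ surjects onto $\mathbb{V}/S_u\oplus\mathbb{V}/S_v$.

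For the inductive step, suppose $\Gamma'$ arises from $\Gamma$ by an $(n-1)$-dimensional $k$-extension with no more than $n-2$ parallel edges. By induction the set $O\subseteq Z_{1,n}(\Gamma)$ where $h^{1}=0$ is nonempty and Zariski open. We need the forms $\alpha_{e_1},\dots,\alpha_{e_k}$ on the deleted edges to be linearly independent, and then extend them by additional forms $\alpha_{k+1},\dots,\alpha_{n-1}$ to an independent set of $n-1$ forms. \Cref{lem: dense-open -> general position} applied with $E'=\{e_1,\dots,e_k\}$ gives such a point in $O$, provided $|E'|\le n-1$ and $|E'\cap N(v)|\le n-2$ for every $v$.

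With that point in hand, \Cref{Extension-for-sheaves} produces $\mathcal{F}'$ with $h^{1}(\mathcal{F}')=0$. It remains to check that $\mathcal{F}'\in Z_{1,n}(\Gamma')$, namely that the new line $\mathcal{S}'(v_*)=\bigcap\ker\alpha_j$ meets every existing $S_u$ trivially. This is an open condition on the free choice of $\alpha_{k+1},\dots,\alpha_{n-1}$. A suitable choice exists because the common kernel can be made to avoid the finitely many lines $S_u$: each constraint only involves the edges at $v_*$, and $\ker\alpha_{e_j}\supseteq S_{u_j}$ is allowed to contain $S_{u_j}$, but the full intersection over $n-1$ independent forms need not.

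\textbf{Main obstacle.} The delicate part is the hypotheses of \Cref{lem: dense-open -> general position}, specifically the condition $|E'\cap N(v)|\le n-2$. The deleted edges at a single vertex could, a priori, number $n-1$. We would handle this by checking that in a tight graph a vertex with $n-1$ deleted incident edges cannot occur. Failing that, we would directly verify linear independence on an explicit point, using that the forms at a vertex lie in $\textup{Ann}(S_v)$, which has dimension $n-1$.

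The second concern is that the two new edges $f_{2j-1},f_{2j}$ must have forms annihilating $S_{v_*}+S_{u_j}$. This holds automatically since $\alpha_{e_j}$ vanishes on $S_{u_j}$ and on $S_{v_*}$.
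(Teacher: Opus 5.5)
Your outline is the paper's argument: reduce to tight graphs, check $K_2^{n-2}$, induct along the Frank--Szeg\H{o} sequence using \Cref{lem: dense-open -> general position} and \Cref{Extension-for-sheaves}, and conclude with \Cref{thm: Genericity_F} and irreducibility. The paper never verifies the hypotheses you try to check, so the checks are worth adding, but two of them are not closed as written.

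The first is easily fixed. The bound $|E'\cap N(v)|\le n-2$ follows from the parallel-edge clause of the Frank--Szeg\H{o} theorem, or equivalently from sparsity of $\Gamma'$ on the pair $\{v_*,v\}$. Every deleted edge at $v$ produces an edge $v_*v$ in $\Gamma'$, so $|E'\cap N(v)|$ is at most the multiplicity of $v_*v$. Your fallback would not have helped in the excluded case. There, $n-1$ independent forms in the $(n-1)$-dimensional space $\textup{Ann}(S_v)$ span it, so $\mathcal{S}'(v_*)=S_v$ and $\mathcal{F}'\notin Z_{1,n}(\Gamma')$.

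The second is a flaw in your justification. Membership $\mathcal{F}'\in Z_{1,n}(\Gamma')$ is not only a condition on the free forms $\alpha_{k+1},\dots,\alpha_{n-1}$. (These must also be chosen in $\textup{Ann}(S_{v_j})$, which you do not state; independence is still achievable since $\dim\textup{Ann}(S_{v_j})=n-1$ exceeds the number of forms already chosen.) One has $\mathcal{S}'(v_*)=S_u$ if and only if all $n-1$ forms at $v_*$ lie in $\textup{Ann}(S_u)$. The forms forced into $\textup{Ann}(S_u)$ are exactly those on edges $v_*u$, so the parallel-edge bound leaves at least one unforced form. However, that form may be some $\alpha_{e_j}$ inherited from the base point rather than a free one.

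For example, take $n=3$ and a $1$-extension of $u_1u_1'$ with third edge to $v_1$. If $S_{v_1}\subseteq S_{u_1}+S_{u_1'}$, then $\mathcal{S}'(v_*)=S_{v_1}$ for every choice of $\alpha_2$. When $k=n-1$ there are no free forms at all.

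The repair is to choose the base point generically as well. Besides $h^1=0$ and independence of the $\alpha_{e_j}$, impose $\alpha_{e_j}(S_u)\neq 0$ for all $u\notin e_j$. These are finitely many nonempty Zariski-open conditions on the irreducible $Z_{1,n}(\Gamma)$, so they can be met simultaneously inside $O$. A generic choice of the free forms then gives $\mathcal{S}'(v_*)\neq S_u$ for every $u$. If some extra edge avoids $u$, its generic form avoids $\textup{Ann}(S_u)$. Otherwise the multiplicity bound forces some deleted edge $e_j$ with $u\notin e_j$, and $\alpha_{e_j}(S_u)\neq 0$ by the choice of base point.
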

\begin{proof}
    It suffices to prove the theorem for $(n-1, n)$-tight graphs using \Cref{lem: Stresses extend}, and since one may always add edges to turn $\Gamma$ into a tight graph.
    The proof is by induction on the number of vertices. For the base case, we need to show that $K^{n-2}_{2}$ is independent. Pick $S_v$ and $S_{u}$ with $S_v\cap S_u = 0$. Whenever $\alpha_{e}$ are in general position, one can easily show $H^{1}(K^{n-1}_{2}, \mathcal{F}) = 0$ by using, for example, the long exact sequence arising from \eqref{eq: les-ass}.
    
       By \Cref{thm: FrankSzegö}, there exists some
     $(n-1)$-dimensional $k$-extension applied to a graph $\Gamma$ yielding a graph $\Gamma'$, where the removed edges are $e_1, \dots, e_{k}$. By the induction hypothesis, there is a Zariski open subset $O \subseteq Z_{1, n}(\Gamma)$ such that $h^{1}(\Gamma, \mathcal{F}) =0$ for all $\mathcal{F}\in O$. By \Cref{lem: dense-open -> general position}, we may find $\mathcal{F}\in O$ such that $\mathcal{F}(e_1) = \alpha_{e_1}, \dots, \mathcal{F}(e_k) = \alpha_{e_k}$ are linearly independent. Using \Cref{Extension-for-sheaves}, we may construct a sheaf $\mathcal{F}' \in Z_{1,n}(\Gamma')$ that satisfies $H^{1}(\Gamma', \mathcal{F'}) = 0$. Then, by \Cref{thm: Genericity_F} and using the fact that $Z_{1,n}(\Gamma')$ is irreducible, there is an open subset of $\mathcal{F}\in Z_{1, n}(\Gamma')$ such that $h^{1}(\Gamma, \mathcal{F}) = 0$, completing the proof of the theorem. 
\end{proof}

If \Cref{Szegö conjecture} is true, then \Cref{thm: main_thm_ass_sheaves} generalises to all sheaves $Z_{s, n}(\Gamma)$, where $s \leq \frac{n + 2}{4}$. We are now ready to prove the main theorem.

\begin{proof}[Proof of \Cref{thm: main theorem}]
One direction follows from \Cref{cor: necessary_concrete}. We only need to show that if $(n-2)\Gamma$ is $(n-1,n)$-sparse, then $H^{1}(I(\Gamma), \mathcal{M})=0$ for a Zariski open subset. 
By \Cref{thm: main_thm_ass_sheaves}, there is a Zariski open subset of $\mathcal{F} \in Z_{1, n}((n-2)\Gamma)$ with $h^{1}(\mathcal{F})=0$. Thus, picking any such $\mathcal{F}$, one has by \Cref{lem: Properties map Z -> M} that $\mathcal{M} = f(\mathcal{F})$ satisfies $h^{1}(\mathcal{M}) = 0$, where $f$ is the map from \Cref{lem: Properties map Z -> M}. Then, by \Cref{lem: Intersections_of_subspaces}, and \Cref{thm: Genericity_generalised_frameworks.} one has a Zariski open subset of $M_{1, n}(\Gamma)$ where $h^{1}(\mathcal{M}) = 0$. 
\end{proof}

\subsection{Inductive constructions for $M_{s, n}(\Gamma)$.}
In this section, we shall provide algebraic conditions that allow for inductive constructions for the class of sheaves $\mathcal{M}_{s, n}(\Gamma)$. In particular, one recovers $0$- and $1$-extensions for the classical cases (see \Cref{sec: inductive constructions gogs}). Since the proof method is essentially the same as that of \Cref{Extension-for-sheaves}, we shall omit some details in the proof.

\begin{theorem}\label{thm: k-extension}
Let $\Gamma$ be a graph, and let $\mathcal{M} = \mathbb{V}/\mathcal{S}$ be a sheaf from the family $M_{s,n}(\Gamma)$, and suppose that $H^{1}(I(\Gamma), \mathcal{M}) = 0$.
Suppose that $\Gamma'$ results from applying a $d$-dimensional $k$-extension, with new vertex $v_{*}$, new edges $f_{1} =u_{1}v_{*}, f_{2}=u_{1}'v_{*},\dots, f_{2k +1}=v_{1}v_*, \dots, f_{d+k}=v_{d-k}v_*$,  and deleted edges $e_1=u_1u_1',\dots, e_k=u_ku_k'$, such that no parallel edges are created. Then, let $\mathcal{S}'\in S_{s,n}(\Gamma')$ be a sheaf $I(\Gamma')$, such that
\begin{equation*}
\mathcal{S'}(x) =\mathcal{S}(x) \textup{ if $x\in V(I(\Gamma))$}.
\end{equation*}
Denote the resulting sheaf from $M_{s,n}$ by $\mathcal{M'}$.
Assume that for all $i \in \{1, \dots, k\}$:
\begin{equation}\label{eq: vertex v and ei}
    \mathcal{S'}(v_*)  + \mathcal{S}(u_i)=\mathcal{S'}(v_*)  + \mathcal{S}(u_i') = \mathcal{S'}(u_i)  + \mathcal{S}(u_i')
\end{equation}

Furthermore, assume that for $j \in \{1, \dots, d-k\}$ one has
\begin{equation} \label{eq: Algebraic_condition_ext1}
\left( \left( \bigcap_{i \neq j,\\ i=1}^{d-k}\mathcal{S}'(v_i) + \mathcal{S}'(v_*)\right) \cap \left( \bigcap_{i=1}^{k}\mathcal{S}'(u_i) + \mathcal{S}'(u_i') \right)\right) + \mathcal{S}'(v_j)  = \mathbb{V},
\end{equation}
and for all $j \in \{1, \dots, k\}$ one has
\begin{equation} \label{eq: Algebraic_condition_ext2}
\left( \left( \bigcap_{i=1}^{d-k}\mathcal{S}'(v_i) + \mathcal{S}'(v_*)\right) \cap \left( \bigcap_{i=1, j \neq i}^{k}\mathcal{S}'(u_i) + \mathcal{S}'(u_i') \right)\right) + \mathcal{S}'(u_j)  = \mathbb{V}.
\end{equation}
Then $H^{1}(I(\Gamma'), \mathcal{M}')  = 0$. In the case where the intersection is over an empty index set, the intersection is understood to be $\mathbb{V}$.
\end{theorem}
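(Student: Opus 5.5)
We mimic the proof of \Cref{Extension-for-sheaves}, now working on the incidence graph. By the long exact sequence \eqref{les}, $H^{1}(I(\Gamma'),\mathcal{M}')=0$ is equivalent to surjectivity of $H^{1}(I(\Gamma'),\mathcal{S}')\rightarrow H^{1}(I(\Gamma'),\underline{\mathbb{V}})$, and by hypothesis the corresponding map for $\Gamma$ is surjective. Cochains of $\underline{\mathbb{V}}$ on $I(\Gamma')$ live on incidences $v\sim e$. We orient each incidence from the vertex to the hyperedge, and use rule \eqref{eq: rule for h1} at vertices of $I(\Gamma')$; these are both the original vertices and the edge-nodes $e$. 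Applying the rule at an edge-node $e=vu$ shows that a value $W$ on $v\sim e$ may be moved to $u\sim e$ with a sign change. Applying it at $v_*$ shows that adding $w$ simultaneously to all incidences $v_*\sim f_i$ changes nothing in $H^{1}$. The elements of $H^{1}(I(\Gamma'),\mathcal{S}')$ are represented by values in $\mathcal{S}'(v)$ placed on incidences $v\sim e$.

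\emph{Special cases.} Let $N\leq H^{1}(I(\Gamma'),\underline{\mathbb{V}})$ be spanned by $W_{v_j\sim f_{2k+j}}$ for $j\leq d-k$, together with $W_{u_j\sim f_{2j-1}}+W_{u_j'\sim f_{2j}}$ for $j\leq k$, suitably signed so that each such element corresponds to a chain ``$u_j\to v_*\to u_j'$'' replacing $e_j$. We first show that $N$ lies in the image.

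For $W_{v_j\sim f_{2k+j}}$, hypothesis \eqref{eq: Algebraic_condition_ext1} lets us write $W=w_*+s_j$ with $s_j\in\mathcal{S}'(v_j)$ and $w_*$ in the large intersection. Since $w_*\in \mathcal{S}'(v_i)+\mathcal{S}'(v_*)$, on each edge $f_{2k+i}$ with $i\neq j$ we can split $w_*=a_i+b_i$ with $a_i\in\mathcal{S}'(v_i)$ and $b_i\in\mathcal{S}'(v_*)$. On the pairs $f_{2i-1},f_{2i}$ we use $w_*\in\mathcal{S}'(u_i)+\mathcal{S}'(u_i')$ together with \eqref{eq: vertex v and ei}, which gives $\mathcal{S}'(u_i)+\mathcal{S}'(u_i')=\mathcal{S}'(v_*)+\mathcal{S}'(u_i)$. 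This lets us push $w_*$ through the star of $v_*$ using only vectors lying in the appropriate stabilisers. Shifting by $w_*$ at $v_*$, and by the relevant pieces at the edge-nodes $f_i$, then shows that the resulting $\mathcal{S}'$-cochain is cohomologous to $W_{v_j\sim f_{2k+j}}$. The pair case is handled the same way using \eqref{eq: Algebraic_condition_ext2}. I expect this bookkeeping to be the main obstacle: each edge-node $f$ carries two incidences whose stabilisers must together absorb the vector being transported, and the hypotheses must be used exactly so that every piece lands in an allowed $\mathcal{S}'(v)$. I would first draw the $d=3$, $k=1$ case and then generalise.

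\emph{General case.} Define $\varphi:\bigoplus_{v\sim e\in I(\Gamma)}\mathcal{S}(v)\rightarrow\bigoplus_{v\sim e\in I(\Gamma')}\mathcal{S}'(v)$ as the identity on surviving incidences, sending $u_j\sim e_j$ to $u_j\sim f_{2j-1}$ and $u_j'\sim e_j$ to $u_j'\sim f_{2j}$. Define $\psi:H^{1}(I(\Gamma),\underline{\mathbb{V}})\rightarrow H^{1}(I(\Gamma'),\underline{\mathbb{V}})/N$ by the same formula. To see that $\psi$ is well defined, we check rule \eqref{eq: rule for h1} at each vertex and edge-node of $I(\Gamma)$. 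At old vertices the check is immediate. At a deleted node $e_j$, the relation transfers $H$ from $u_j\sim e_j$ to $u_j'\sim e_j$, and modulo $N$ together with the rule at $v_*$, this matches transferring along $u_j\to v_*\to u_j'$.

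Next, $\psi$ is surjective. Every generator $W_{v\sim f}$ of $H^{1}(I(\Gamma'),\underline{\mathbb{V}})$ either lies in the image, or lies in $N$, or is reduced to one of these by the rule at the edge-node $f$, which moves the value to the $v_*$ incidence, followed by the rule at $v_*$.

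The square formed by $\varphi$, the two quotient maps, and $\psi$ commutes, and the left composite is surjective by independence of $\mathcal{M}$. Hence $H^{1}(I(\Gamma'),\mathcal{S}')\rightarrow H^{1}(I(\Gamma'),\underline{\mathbb{V}})/N$ is surjective. Combined with $N\subseteq$ image, this shows the full map is surjective, so $H^{1}(I(\Gamma'),\mathcal{M}')=0$.
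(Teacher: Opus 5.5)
Your proposal is correct and follows the paper's proof essentially step for step. The ingredients are the same: the reduction to surjectivity of $H^{1}(I(\Gamma'),\mathcal{S}')\rightarrow H^{1}(I(\Gamma'),\underline{\mathbb{V}})$, the two special cases built from the splittings given by \eqref{eq: Algebraic_condition_ext1}, \eqref{eq: Algebraic_condition_ext2} and \eqref{eq: vertex v and ei}, and the maps $\varphi,\psi$ with the commuting square. The only cosmetic difference is that you place the generators of $N$ on the outer incidences $v_j\sim f_{2k+j}$, $u_j\sim f_{2j-1}$, $u_j'\sim f_{2j}$ rather than on the incidences at $v_*$. By the rule at the edge-nodes this spans the same subspace, and it makes the well-definedness check at the deleted nodes $e_j$ immediate.
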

\begin{proof}
    The idea behind the proof is essentially the same as that of \Cref{Extension-for-sheaves}. One may orient all edges $e\sim v \in I(\Gamma)$ such that $[e: e\sim v]=1,[v: e\sim v]=-1$ 
    First we show that for two special cases, certain cochains are in the image of the map $H^{1}(I(\Gamma'), \mathcal{S}') \rightarrow H^{1}(I(\Gamma'), \underline{\mathbb{V}})$. We let $W_{v\sim e} \in H^{1}(I(\Gamma'), \underline{\mathbb{V}})$ be defined for $W\in \mathbb{V}$ and $v \sim e \in I$, as
\begin{equation*}
\begin{cases}
    W_{v\sim e}(v\sim e) = W \\
    W_{v\sim e}(x\sim e') = 0 &\text{for other incidences $x \sim e' \in I$.} 
\end{cases}
\end{equation*}
The first case is given by elements of the form $W_{v_* \sim f_{2k + j}}$ for some $j \in \{1, \dots, d-k\}$. The second case is given by elements of the form $W_{v_* \sim f_{2j-1}} + W_{v_* \sim f_{2j}}$ for $j \leq k$. 
 For the first case, by \eqref{eq: Algebraic_condition_ext1}, one can find elements
\begin{align*}
    &w_r \in \left( \left( \bigcap_{i=1, i \neq j}^{d-k}\mathcal{S}'(v_{i}) + \mathcal{S}'(v_*)\right) \cap \left( \bigcap_{i=1}^{k}\mathcal{S}'(u_i) + \mathcal{S}'(u_i') \right)\right) \\
    &w_{v_{j}} \in \mathcal{S}'(v_{j}) 
\end{align*}

such that $w_r + w_{v_{j}}  = W$. For each $u_i$ we can find $w_{u_i}\in \mathcal{S}'(u_i)$ and $w_{v_*, u_i}\in \mathcal{S}'(v_*)$ such that $w_{u_i} +w_{v_*, u_i} = w_{r}$, and similarly for each $u_i'$. For each $v_i$, one can find $w_{v_i}\in \mathcal{S}'(v_i)$ and $w_{v_*, v_i}\in \mathcal{S}'(v_*)$ such that $w_{v_i} + w_{v_*, v_i} = w_r$. One can easily use these elements to define a cochain belonging to $H^{1}(I(\Gamma'),\mathcal{S}')$ (see \Cref{fig: proof_case_1_bis}) mapping onto $W_{v_* \sim f_{2k + j}} \in H^{1}(I(\Gamma'), \underline{\mathbb{V}})$.

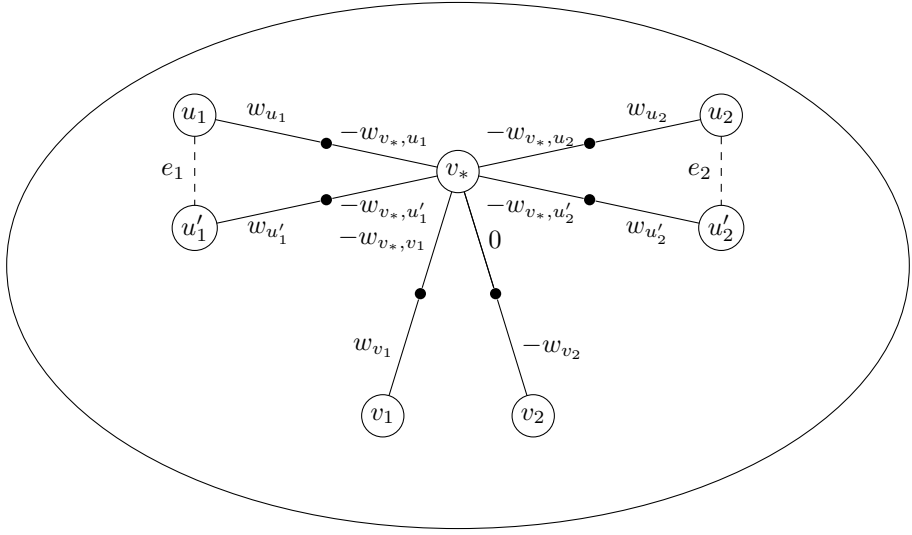
\begin{figure}[h!]
\centering
\begin{tikzpicture}
\tikzstyle{vertex}=[circle, draw, fill=white, minimum size=16pt, inner sep=1pt, outer sep=0pt]
\tikzstyle{dot}=[fill=black, circle, inner sep=1.5pt]
\draw (0,0) ellipse (6cm and 3.5cm);
\node[vertex] (vstar) at (0,1.25)  {$v_*$};
\node[dot] (vstar* - v1) at (-1.75, 1.625)  {};
\node[vertex] (v1) at (-3.5,2)  {$u_1$};
\node[dot] (vstar* - v2) at (-1.75, 0.875)  {};
\node[vertex] (v2) at (-3.5,.5)  {$u_1'$};
\node[dot] (vstar* - v3) at (1.75, 1.625)  {};
\node[vertex] (v3) at (3.5,2)  {$u_2$};
\node[dot] (vstar* - v4) at (1.75, 0.875)  {};
\node[vertex] (v4) at (3.5,.5)  {$u_2'$};
\node[dot] (vstar* - v5) at (-0.5, -0.375)  {};
\node[vertex] (v5) at (-1, -2)  {$v_1$};
\node[dot] (vstar* - v6) at (0.5, -0.375)  {};
\node[vertex] (v6) at (1, -2)  {$v_2$};
\draw[dashed]  (v1) -- (v2)  node[midway, left, draw=none, fill=none] {$e_1$};
\draw[dashed]  (v3) -- (v4)  node[midway, left, draw=none, fill=none] {$e_2$};

\draw (vstar) -- (vstar* - v1) node[midway, above, draw=none, fill=none] {$-w_{v_*, u_1}$};
\draw (vstar* - v1) -- (v1) node[midway, above, draw=none, fill=none] {$w_{u_1}$};
\draw (vstar) -- (vstar* - v2) node[midway, below, draw=none, fill=none] {$-w_{v_*, u_1'}$};
\draw (vstar* - v2) -- (v2) node[midway, below, draw=none, fill=none] {$w_{u_1'}$};
\draw (vstar) -- (vstar* - v3) node[midway, above, draw=none, fill=none] {$-w_{v_*, u_2}$};
\draw (vstar* - v3) -- (v3) node[midway, above, draw=none, fill=none] {$w_{u_2}$};
\draw (vstar) -- (vstar* - v4) node[midway, below, draw=none, fill=none] {$-w_{v_*, u_2'}$};
\draw (vstar* - v4) -- (v4) node[midway, below, draw=none, fill=none] {$w_{u_2'}$};

\draw (vstar) -- (vstar* - v6) node[midway, right, draw=none, fill=none] {$0$};

\draw (vstar) -- (vstar* - v5) node[midway, left, draw=none, fill=none] {$-w_{v_* ,v_1}$};
\draw (vstar* - v5) -- (v5) node[midway, left, draw=none, fill=none] {$w_{v_1}$};
\draw (vstar* - v6) -- (v6) node[midway, right, draw=none, fill=none] {$-w_{v_2}$};
\draw  (vstar) -- (vstar* - v6)  node[midway, left, draw=none, fill=none] {};
\end{tikzpicture}
\caption{An illustration of the cochain for the first case for the $4$-dimensional $2$-extension. Edges not involved in the $2$-extension are not pictured. The edges $e_1$ and $e_2$ are replaced, and edges $f_1,\dots, f_6$ are added. The picture illustrates the construction of the cochain equal to $W_{v_* \sim f_6}$, where $f_6$ is the edge between $v_2$ and $v_*$.}
\label{fig: proof_case_1_bis}
\end{figure}

For the second case, by
\eqref{eq: Algebraic_condition_ext2}, one can find elements
\begin{align*}
    &w_r \in \left( \left( \bigcap_{i=1}^{d-k}\mathcal{S}'(v_i) + \mathcal{S}'(v_*)\right) \cap \left( \bigcap_{i=1, i \neq j}^{k}\mathcal{S}'(u_i) + \mathcal{S}'(u_i') \right)\right)\\
    &w_{u_j} \in \mathcal{S}'(u_j) 
\end{align*}
such that $w_r + w_{u_j}  = W$. 
By the assumptions, one can write $w_{u_j} = w_{u_j'}  + w_{v_*, u_j'}$, where $w_{u_j'}\in \mathcal{S}'(u_j')$ and $w_{v_*, u_j'}\in \mathcal{S}'(v_*)$. For all other edges, one can write $w_{r} = w_{v_i} + w_{v_*, v_i}$ with $w_{v_i}\in \mathcal{S}'(v_j),w_{v_*, v_i}\in \mathcal{S}'(v_*)$ or $w_{r} = w_{u_i} + w_{v_*, u_i}$ with $w_{u_i}\in \mathcal{S}'(u_i),w_{v_*, u_i}\in \mathcal{S}'(v_*)$ and similarly for $u_i'$. One can then use these elements to define a cochain mapping onto $W_{v_* \sim f_{2j-1}} + W_{v_* \sim f_{2j}}$ (see also \Cref{fig: proof_case_2_bis}).
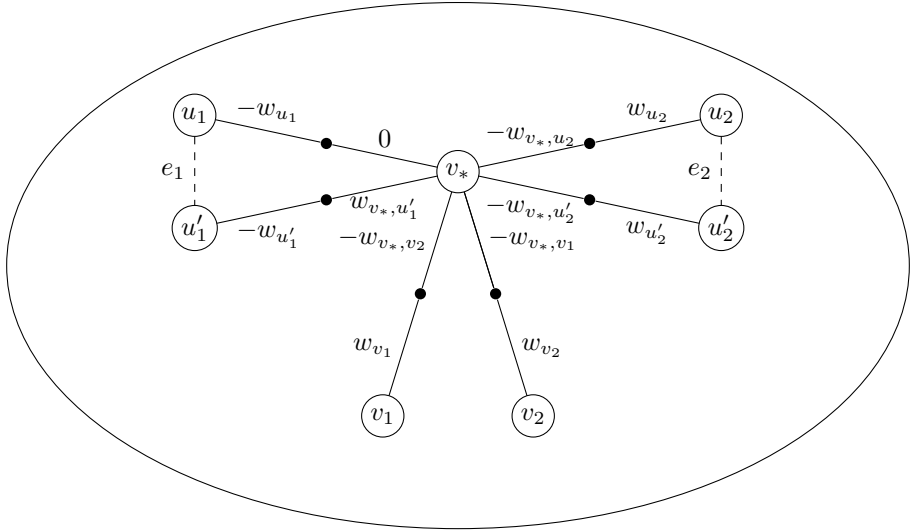
\begin{figure}[h!]
\centering
\begin{tikzpicture}
\tikzstyle{vertex}=[circle, draw, fill=white, minimum size=16pt, inner sep=1pt, outer sep=0pt]
\tikzstyle{dot}=[fill=black, circle, inner sep=1.5pt]
\draw (0,0) ellipse (6cm and 3.5cm);
\node[vertex] (vstar) at (0,1.25)  {$v_*$};
\node[dot] (vstar* - v1) at (-1.75, 1.625)  {};
\node[vertex] (v1) at (-3.5,2)  {$u_1$};
\node[dot] (vstar* - v2) at (-1.75, 0.875)  {};
\node[vertex] (v2) at (-3.5,.5)  {$u_1'$};
\node[dot] (vstar* - v3) at (1.75, 1.625)  {};
\node[vertex] (v3) at (3.5,2)  {$u_2$};
\node[dot] (vstar* - v4) at (1.75, 0.875)  {};
\node[vertex] (v4) at (3.5,.5)  {$u_2'$};
\node[dot] (vstar* - v5) at (-0.5, -0.375)  {};
\node[vertex] (v5) at (-1, -2)  {$v_1$};
\node[dot] (vstar* - v6) at (0.5, -0.375)  {};
\node[vertex] (v6) at (1, -2)  {$v_2$};

\draw[dashed]  (v1) -- (v2)  node[midway, left, draw=none, fill=none] {$e_1$};
\draw[dashed]  (v3) -- (v4)  node[midway, left, draw=none, fill=none] {$e_2$};

\draw (vstar) -- (vstar* - v1) node[midway, above, draw=none, fill=none] {$0$};
\draw (vstar* - v1) -- (v1) node[midway, above, draw=none, fill=none] {$-w_{u_1}$};
\draw (vstar) -- (vstar* - v2) node[midway, below, draw=none, fill=none] {$w_{v_*, {u_1'}}$};
\draw (vstar* - v2) -- (v2) node[midway, below, draw=none, fill=none] {$-w_{u_1'}$};
\draw (vstar) -- (vstar* - v3) node[midway, above, draw=none, fill=none] {$-w_{v_*, {u_2}}$};
\draw (vstar* - v3) -- (v3) node[midway, above, draw=none, fill=none] {$w_{u_2}$};
\draw (vstar) -- (vstar* - v4) node[midway, below, draw=none, fill=none] {$-w_{v_*, {u_2'}}$};
\draw (vstar* - v4) -- (v4) node[midway, below, draw=none, fill=none] {$w_{u_2'}$};

\draw (vstar) -- (vstar* - v6) node[midway, right, draw=none, fill=none] {$-w_{v_* ,v_1}$};

\draw (vstar) -- (vstar* - v5) node[midway, left, draw=none, fill=none] {$-w_{v_* ,v_2}$};
\draw (vstar* - v5) -- (v5) node[midway, left, draw=none, fill=none] {$w_{v_1}$};
\draw (vstar* - v6) -- (v6) node[midway, right, draw=none, fill=none] {$w_{v_2}$};
\draw  (vstar) -- (vstar* - v6)  node[midway, left, draw=none, fill=none] {};
\end{tikzpicture}
\caption{An illustration of the cochain for the second case for the $4$-dimensional $2$-extension. The picture illustrates the construction of the cochain equal to $W_{u_1, u_1 \sim f_1} + W_{u_1', u_1' \sim f_2} $}
\label{fig: proof_case_2_bis}
\end{figure}

Let $N$ be the vector subspace of $H^{1}(I(\Gamma'), \underline{\mathbb{V}})$ generated by 
\begin{equation*}
    W_{v_{*} \sim f_{2k+i}}, W_{v_* \sim f_{2i -1}} + W_{v_* \sim f_{2i}},
\end{equation*}
  where $W\in \mathbb{V}$ and $i\in \{1, \dots d-k\}$ or $i\in \{1, \dots, k\}$.

One can define 
\begin{equation*}
\begin{array}{cl}
   \varphi: & \oplus_{v\sim e\in I(\Gamma)}  \mathcal{S}(v) \rightarrow \oplus_{v\sim e\in I(\Gamma')} \mathcal{S}'(v):\\
   &\begin{cases}
    w_{v \sim e} \mapsto w_{v \sim e} &\text{ if $v$ and $e$ are present in $\Gamma'$}\\
    w_{v_i \sim e_i} \mapsto w_{ v_i \sim f_{2i-1}} &\text{ if $e_i=v_iv_{i}'$ is a deleted edge}\\
    w_{v_{i}' \sim e_i} \mapsto w_{ v_{i}' \sim f_{2i}} &\text{ if $e_i=v_iv_{i}'$ is a deleted edge}
   \end{cases}
\end{array}
\end{equation*}
This is clearly well defined. We define
\begin{equation*}
\begin{array}{cl}
   \psi: & H^{1}(I(\Gamma), \underline{\mathbb{V}}) \rightarrow H^{1}(I(\Gamma'), \underline{\mathbb{V}})/N:\\
   &\begin{cases}
    W_{v \sim e} \mapsto W_{v \sim e} &\text{ if $v$ and $e$ are present in $\Gamma'$}\\
    W_{v_i \sim e_i} \mapsto W_{v_i \sim f_{2i-1}} &\text{ if $e_i=v_iv_{i}'$ is a deleted edge}\\
    W_{v_{i}' \sim e_i} \mapsto W_{v_{i}'\sim f_{2i}} &\text{ if $e_i=v_iv_{i}'$ is a deleted edge}
   \end{cases}
\end{array}
\end{equation*}

One can show well-defined-ness of $\psi$ using the first and second cases. The remainder of the proof follows verbatim as in the proof of \Cref{Extension-for-sheaves}.
\end{proof}

\section{Applications to graph-of-groups realisations}\label{sec: applications to gogs}
\subsection{Graph-of-groups realisations of hypergraphs}\label{subsec: Background - graphs of groups}
We describe here the approach to rigidity theory developed in \cite{graphsofgroups}.

Let $G$ be a group and let $S(G)$ be the lattice of subgroups of $G$.
Given a hypergraph $\Gamma = (V,E)$, a realisation of $\Gamma$ as a graph of groups, (or a graph-of-groups realisation) is a function
\begin{equation*}
    \begin{array}{rccl}
    \rho:& V\cup E &\rightarrow &S(G)\\ 
    &x&\mapsto &\rho(x).
    \end{array}
\end{equation*}

By taking the incidence graph of $\Gamma$ and placing the group $\rho(v)$ on the vertex representing the vertex $v\in V$, $\rho(e)$ on the vertex representing the edge $e\in E$, and $\rho(v \sim e)=\rho(v)\cap \rho(e)$, we obtain a so-called graph of groups. The monomorphisms 
\begin{align*}
    i_v:&\rho(v \sim e) \rightarrow \rho(v)\\
    i_e:&\rho(v \sim e) \rightarrow \rho(e),
\end{align*}
are defined by inclusion. We will use the notation $\rho(\Gamma)$ to denote a hypergraph together with a realisation $\rho$ as a graph of groups. When $\rho(x)$ are Lie subgroups for all $x\in V\cup E\cup I$, we will say that $\rho(\Gamma)$ is a Lie graph-of-groups realisation.

\begin{definition}
\label{def:motion}
Let $I = E(I(\Gamma))$ be the set of incidences of a hypergraph $\Gamma$. Define a {\em motion} of a realisation $\rho(\Gamma)$ to be an indexed set of group elements $(\sigma_x)_{x\in V\cup E \cup I} \in G^{\vert V\vert + \vert E\vert +\vert I \vert}$, such that for every $x\in V\cup E$ and $i\in I$ with $i=x\sim y$ or $i=y \sim x$ one has that 
\begin{equation}
\label{eq:incidencepreserving}
    \sigma_i^{-1}\sigma_x\in \rho(x).
\end{equation}
\end{definition}
        
A motion $M=(\sigma_x)_{x\in V\cup E \cup I}$ of a realisation $\rho(\Gamma)$ defines a new realisation $\rho'(\Gamma)$, obtained by conjugating the groups $\rho(v)$ and $\rho(e)$ with the corresponding group elements in the motion. 
In other words, if $\rho$ is a realisation, then  $\rho^{M}$ is defined as
$$\begin{array}{rccl}
    \rho^{M}:& \Gamma &\rightarrow &S(G)\\
    &v&\mapsto &\rho(v)^{\sigma_v} \text{ for vertices}, \\ 
    &e&\mapsto &\rho(e)^{\sigma_e} \text{ for edges}. \\ 
\end{array}$$

When $\rho(\Gamma)$ is a Lie graph-of-groups-realisation, one gets a definition of infinitesimal motion in this context. Denote the Lie algebra of $\rho(x)$ by $\mathfrak{h}_x$, and the Lie algebra of $G$ by $\mathfrak{g}$. An infinitesimal motion is defined to be an element $$w_{v_1}, \cdots, w_{v_n}, w_{e_1}, \cdots w_{e_k}, w_{i_1} \cdots w_{i_m} .$$ of
\begin{equation*}
    \mathfrak{g}/\mathfrak{h}_{v_1} \times \cdots \times \mathfrak{g}/\mathfrak{h}_{v_n} \times \mathfrak{g}/\mathfrak{h}_{e_1} \times \cdots \times \mathfrak{g}/\mathfrak{h}_{e_k} \times  \mathfrak{g}/\mathfrak{h}_{i_1} \times \cdots \times  \mathfrak{g}/\mathfrak{h}_{i_m} .
\end{equation*}
such that for any incidence $i=v \sim e$, one has
\begin{align*}
    w_i - w_v &= 0 \mod \mathfrak{h}_v\\
    w_i - w_e &= 0 \mod \mathfrak{h}_e
\end{align*}

We denote the resulting vector space by $IM_\rho$. It can be shown that this is isomorphic to the usual definition \cite[Theorem 4.8]{graphsofgroups} in the case of the Euclidean group. One also has so-called trivial infinitesimal motions. One defines $i: \mathfrak{g}\rightarrow IM_\rho:  w\mapsto [(w, w, \dots, w)]$. The space which is the image of $i$ is the space of trivial infinitesimal motions. The following lemma states that when frameworks are 'large enough', one has that all Lie-algebra elements define motions. 

\begin{lemma}\label{lem: Trivial_motions}
Let $\rho(\Gamma)$ be a Lie graph-of-groups realisation. Then, with $$i: \mathfrak{g}\rightarrow IM_\rho:  w\mapsto [(w, \dots, w)]$$ one has $$\ker(i) = \bigcap_{x\in V(\Gamma)\cup E(\Gamma)} \mathfrak{h}_{x}.$$
\end{lemma}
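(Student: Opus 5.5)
The plan is to compute $\ker(i)$ directly from the definition of $IM_\rho$ as a subspace of the product of quotients. An element $w\in\mathfrak{g}$ is sent to the tuple whose entries are the classes $[w]\in \mathfrak{g}/\mathfrak{h}_x$, one for each $x\in V\cup E\cup I$. First I would check that this tuple really lies in $IM_\rho$. For every incidence $i=v\sim e$ the differences $w_i-w_v$ and $w_i-w_e$ are both $w-w=0$, so the compatibility conditions hold trivially and $i$ is well defined and linear.

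Next I would identify the kernel. The tuple $i(w)$ is zero in the product exactly when $[w]=0$ in $\mathfrak{g}/\mathfrak{h}_x$ for every index $x$. That means $w\in\mathfrak{h}_x$ for all $x\in V\cup E$, and also $w\in\mathfrak{h}_i$ for every incidence $i$. So $\ker(i)=\bigcap_{x\in V\cup E\cup I}\mathfrak{h}_x$.

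It remains to remove the incidence terms from this intersection. Since $\rho(v\sim e)=\rho(v)\cap\rho(e)$, its Lie algebra is $\mathfrak{h}_{v\sim e}=\mathfrak{h}_v\cap\mathfrak{h}_e$. The only step needing care is the standard fact that the Lie algebra of an intersection of Lie subgroups equals the intersection of their Lie algebras. To justify it, note that $X\in\mathfrak{h}_v\cap\mathfrak{h}_e$ if and only if $\exp(tX)\in\rho(v)\cap\rho(e)$ for all $t$, which is the characterisation of the Lie algebra of the closed (or Lie) subgroup $\rho(v)\cap\rho(e)$. Given this, each $\mathfrak{h}_{v\sim e}$ contains $\mathfrak{h}_v\cap\mathfrak{h}_e$, which in turn contains $\bigcap_{x\in V\cup E}\mathfrak{h}_x$, so the incidence terms do not shrink the intersection. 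This yields $\ker(i)=\bigcap_{x\in V(\Gamma)\cup E(\Gamma)}\mathfrak{h}_x$.

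I expect no real obstacle beyond this bookkeeping about intersections of Lie subalgebras. If the intersection fact is awkward for non-closed subgroups, one can avoid it altogether. It suffices to use the inclusion $\mathfrak{h}_{v\sim e}\subseteq \mathfrak{h}_v\cap\mathfrak{h}_e$, which is obvious because $\rho(v\sim e)$ is a subgroup of both $\rho(v)$ and $\rho(e)$. This gives one containment, and the reverse containment follows since every incidence vertex lies in some edge.
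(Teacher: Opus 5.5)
Your main argument is correct. From the definition, $i(w)=0$ exactly when $w\in\mathfrak{h}_x$ for every $x\in V\cup E\cup I$. The incidence terms are redundant because $\mathfrak{h}_{v\sim e}=\mathfrak{h}_v\cap\mathfrak{h}_e$.

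The paper states this lemma without proof; it is recalled from \cite{graphsofgroups}. The closest in-paper reasoning is in Section~\ref{sec: basics}. There, for connected $\Gamma$ and under the standing assumption $\rho(e)=\bigcap_{v\in e}\rho(v)$, the map $i$ is identified with $p_\Gamma$. The paper then reads off $\ker(i)\cong H^0(I(\Gamma),\mathcal{S})\cong\bigcap_{v\in V}\mathfrak{h}_v$ from the long exact sequence \eqref{les}. Your direct computation is more elementary and proves the lemma in its stated generality, with no connectivity assumption and arbitrary edge groups. The sheaf route's advantage is that it places $\ker(i)$ inside the exact sequence the rest of the paper exploits.

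Your justification of $\mathfrak{h}_{v\sim e}=\mathfrak{h}_v\cap\mathfrak{h}_e$ via $\mathfrak{h}=\{X\in\mathfrak{g} : \exp(tX)\in H \text{ for all } t\}$ is sound. This characterisation holds for arbitrary (immersed, second-countable) Lie subgroups, not only closed ones, because such subgroups are weakly embedded. So the worry that motivated your last paragraph is unfounded.

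You should delete that final paragraph, because the fallback is logically invalid. The inclusion $\mathfrak{h}_{v\sim e}\subseteq\mathfrak{h}_v\cap\mathfrak{h}_e$ can only yield $\bigcap_{x\in V\cup E\cup I}\mathfrak{h}_x\subseteq\bigcap_{x\in V\cup E}\mathfrak{h}_x$. That containment is trivial anyway, since intersecting over more indices only shrinks the result.

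Given this trivial direction, the lemma is equivalent to $\bigcap_{x\in V\cup E}\mathfrak{h}_x\subseteq\mathfrak{h}_{v\sim e}$ for every incidence. This is exactly where the reverse inclusion $\mathfrak{h}_v\cap\mathfrak{h}_e\subseteq\mathfrak{h}_{v\sim e}$ is needed. The remark that every incidence involves a vertex and an edge does not supply it. If $\mathfrak{h}_{v\sim e}$ could be a proper subspace of $\mathfrak{h}_v\cap\mathfrak{h}_e$, an element of $\bigcap_{x\in V\cup E}\mathfrak{h}_x$ lying outside $\mathfrak{h}_{v\sim e}$ could have nonzero image under $i$. So the Lie-algebra-of-intersection fact is the real content of the lemma and cannot be bypassed.
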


The following bound was proved in \cite{graphsofgroups}:
\begin{theorem}\label{thm: weak_maxwell}
Let $\rho(\Gamma)$ be a Lie graph-of-groups realisation in a group $G$. Then 
\begin{align*}
\dim(IM_{\rho}) \geq& \sum_{x\in V\cup E} \dim(G/ \rho(x)) +\sum_{i\in I} \dim(G/ \rho(i)) \\&- \sum_{i=x*y \in I}(\dim(G/ \rho(x)) +\dim(G/ \rho(y))) 
\end{align*}
\end{theorem}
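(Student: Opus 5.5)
The plan is to realise $IM_\rho$ as the kernel of an explicit linear map between finite-dimensional real vector spaces, and then apply rank–nullity exactly as in the proof of \Cref{thm: Maxwell-rule}.

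First, for every $x\in V\cup E\cup I$ we have $\dim(G/\rho(x)) = \dim(\mathfrak{g}) - \dim(\mathfrak{h}_x) = \dim(\mathfrak{g}/\mathfrak{h}_x)$, since $\rho(x)$ is a Lie subgroup with Lie algebra $\mathfrak{h}_x$. For an incidence $i = v\sim e$ we have $\rho(i) = \rho(v)\cap\rho(e)$, so its Lie algebra satisfies $\mathfrak{h}_i \subseteq \mathfrak{h}_v$ and $\mathfrak{h}_i\subseteq \mathfrak{h}_e$. Consequently the reductions
\begin{equation*}
\pi^{i}_{v}:\mathfrak{g}/\mathfrak{h}_i\rightarrow \mathfrak{g}/\mathfrak{h}_v, \qquad \pi^{i}_{e}:\mathfrak{g}/\mathfrak{h}_i\rightarrow \mathfrak{g}/\mathfrak{h}_e
\end{equation*}
are well-defined linear maps. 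This is the only point needing care: it guarantees that the conditions $w_i - w_v = 0 \bmod \mathfrak{h}_v$ and $w_i - w_e = 0 \bmod \mathfrak{h}_e$ are linear equations on $\prod_x \mathfrak{g}/\mathfrak{h}_x$.

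Next, define the linear map
\begin{equation*}
D: \bigoplus_{x\in V\cup E\cup I} \mathfrak{g}/\mathfrak{h}_x \rightarrow \bigoplus_{i=v\sim e\in I} \left(\mathfrak{g}/\mathfrak{h}_v \oplus \mathfrak{g}/\mathfrak{h}_e\right),
\end{equation*}
whose $i$-th component is
\begin{equation*}
\left(\pi^{i}_{v}(w_i) - w_v,\; \pi^{i}_{e}(w_i) - w_e\right).
\end{equation*}
By the definition of infinitesimal motions, $IM_\rho = \ker(D)$. (Equivalently, $D$ is the coboundary of a cellular sheaf on the barycentric subdivision of $I(\Gamma)$, so $IM_\rho$ is its $H^{0}$, in the spirit of \Cref{sec: Background - sheaves}.)

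Finally, rank–nullity gives
\begin{equation*}
\dim\ker(D) = \dim(\textup{dom}\, D) - \textup{rk}(D) \geq \dim(\textup{dom}\, D) - \dim(\textup{codom}\, D).
\end{equation*}
Counting dimensions, the domain has dimension $\sum_{x\in V\cup E}\dim(G/\rho(x)) + \sum_{i\in I}\dim(G/\rho(i))$, and the codomain has dimension $\sum_{i=x*y\in I}\left(\dim(G/\rho(x)) + \dim(G/\rho(y))\right)$. Substituting these yields the stated inequality. I expect no real obstacle beyond checking the well-definedness of the reduction maps above; the defect $\dim(\textup{codom}\, D)-\textup{rk}(D)$ plays the role of $H^{1}$, as in \Cref{thm: Maxwell-rule}.
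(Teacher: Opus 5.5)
Your argument is correct and is essentially the paper's: the paper identifies $IM_\rho$ with $H^{0}(I(\Gamma),\mathcal{M})$ for the motion sheaf, applies \Cref{thm: Maxwell-rule} (rank--nullity for the coboundary), and drops the term $\dim H^{1}(I(\Gamma),\mathcal{M})\geq 0$. The one difference is generality. The paper works under its standing assumption $\rho(e)=\bigcap_{v\in e}\rho(v)$, where $\mathfrak{h}_{v\sim e}=\mathfrak{h}_e$ lets one eliminate the incidence variables $w_i$ and the incidence terms in the count cancel; your map $D$, the coboundary of a sheaf on the barycentric subdivision of $I(\Gamma)$, keeps them and so proves the statement for arbitrary Lie graph-of-groups realisations.
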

This was then used to show that the graph-of-groups realisation satisfy a certain sparsity condition, which gives the Maxwell condition in the case of the Euclidean group. 

In the remainder of this paper, we will treat in detail the example which corresponds to bar-joint frameworks, though the model does treat general situations.
\begin{example}\label{ex: se(d)}
    Let $(\Gamma, p)$ be a bar-joint framework, i.e. $\Gamma$ is a graph and $p: V(\Gamma) \rightarrow \mathbb{R}^{d}$ is simply a function. We define a Lie-graph-of-groups realisation $\rho_{p}(\Gamma)$ in the group $$E(d) = \{\begin{bmatrix}
        A & b \\
        0 &1
    \end{bmatrix}\in \textup{Gl}(\mathbb{R}, d+1) ~\vert~ A^{t}=A^{-1}, b\in \mathbb{R}^{d}\}$$ by
    \begin{equation*}
    \begin{array}{rccl}
    \rho_p:& V\cup E &\rightarrow &S(E(d))\\ 
    &v&\mapsto &\text{Stab}(p(v)).\\
    &e=vw&\mapsto &\text{Stab}(p(v)) \cap \text{Stab}(p(w))
    \end{array}
    \end{equation*}
    For any point $q \in \mathbb{R}^{d}$, $\Stab(q)$ is the set of matrices of the form
    \begin{equation*}
    \begin{bmatrix}
        A & q - A q\\
        0 & 1
    \end{bmatrix}
    \end{equation*}
    The intersection of two such stabiliser subgroups, say of $q_1, q_2\in \mathbb{R}^{d}$ with $q_1 \neq q_2$, is the set of such matrices such that additionally $A\cdot (q_1 - q_2)= q_1 - q_2$. Geometrically, one can view these elements as the rotations whose axis of rotation contains both $q_1$ and $q_2$. 
    The Lie algebra of $E(d)$, denoted by $\mathfrak{e}(d)$, consists of the matrices 
    \begin{equation*}
    \begin{bmatrix}
    S & t\\
    0 & 0
    \end{bmatrix}\in \mathbb{R}^{(d+1) \times (d+1)}
    \end{equation*}
    where $S\in \mathbb{R}^{d \times d}$, and $S^{t} = - S$, and $t\in \mathbb{R}^{d}$, and the Lie algebra of  $\Stab(q)$ for $q\in \mathbb{R}^d$, consists of the matrices
    \begin{equation*}
    \begin{bmatrix}
    S & -Sq \\
    0 & 0
    \end{bmatrix}
    \end{equation*}
    where $S\in \mathbb{R}^{d \times d}$, and $S^{t} = - S$. The dimension of the stabiliser of any point $q\in \mathbb{R}^{d}$ is seen to be $\binom{d}{2}$, since the Lie algebra clearly has this dimension. Finding the Lie algebra of the intersection of two such stabilisers (say of $q_1$ and $q_2$, with $q_1 \neq q_2$), amounts to solving $-Sq_1 = -Sq_2$, which is equivalent to the vector $q_1 - q_2$ being a zero of $S$. By conjugating one can assume that $q_1= 0$ and $q_2 = (1, 0, \cdots, 0)$, and one sees that the dimension of $\rho(e)$ is $\binom{d-1}{2}$. One also has that the intersection of the Lie algebras of stabilisers of $k$ points is trivial if the $k$ points affinely span a subspace of dimension at least $d-1$.

    We note for what follows that, if $\mathfrak{h}_v$ is the Lie algebra of a point stabiliser of a point and if $\mathfrak{h}_e$ is the Lie algebra of the stabiliser of two points, that
    \begin{align*}
        \dim(\mathfrak{g}/\mathfrak{h}_v) &= \binom{d+1}{2} - \binom{d}{2} =d\\
        \dim(\mathfrak{g}/\mathfrak{h}_e) &= \binom{d+1}{2} - \binom{d - 1}{2} =2d -1.
    \end{align*}
    
    Let us now look at the particular cases $d=2$ and $d=3$. When $d=2$, the Lie algebra $\mathfrak{e}(2)$ consists of the $3$-dimensional vector space
    \begin{equation*}
    \begin{bmatrix}
     0 & -\omega & a \\
    \omega & 0  & b\\
    0 & 0  & 0
    \end{bmatrix}
    \end{equation*}
    where $a,b,\omega \in \mathbb{R}$ and the Lie algebra of a stabiliser of a point $(x, y) \in \mathbb{R}^{2}$ is the $1$-dimensional vector space of matrices of the form
    \begin{equation*}
    \begin{bmatrix}
     0 &  -\omega &  \omega y \\
    \omega & 0  & -\omega x\\
    0 & 0  & 0
    \end{bmatrix} 
     \end{equation*}
    where $ -\omega \in \mathbb{R}.$ The Lie algebra of the stabilisers of two points is $\{0\}$.

    For $d=3$, the Lie algebra $\mathfrak{e}(3)$ consists of the $6$-dimensional vector space
    \begin{equation*}
    \begin{bmatrix}
     0 & -\omega_3 &\omega_2& a \\
    \omega_3 & 0  & -\omega_1& b\\
    -\omega_2 & \omega_1  & 0  & c\\
    0 & 0 & 0 & 0
    \end{bmatrix}
    \end{equation*}
    where $\omega_1, \omega_2, \omega_3, a,b,c\in \mathbb{R}$ and the Lie algebra of a stabiliser of a point $(x, y, z) \in \mathbb{R}^{3}$ is the $3$-dimensional vector space of matrices of the form
    \begin{equation*}
    \begin{bmatrix}
     0 & -\omega_3 &\omega_2& \omega_3 y -\omega_2 z\\
    \omega_3 & 0  & -\omega_1& - \omega_3 x +\omega_1 z\\
    -\omega_2 & \omega_1  & 0  & \omega_2 x - \omega_1 y\\
    0 & 0 & 0 & 0
    \end{bmatrix} 
     \end{equation*}
    where $\omega_1, \omega_2, \omega_3 \in \mathbb{R}.$ We note that, just keeping the coordinates $(\omega_1, \omega_2, \omega_3, a,b,c) \in \mathbb{R} ^{6}$, the Lie algebra of the stabiliser consists precisely of $$\{(\omega , -\omega \times (x,y,z)) \in \mathbb{R}^{6}~\vert~ \omega \in \mathbb{R}^{3}\}.$$
    With this notation, one can see that the intersection of two stabilisers, say again of $q_1=(x_1,y_1, z_1)\in \mathbb{R}^{3}$ and $q_2=(x_2, y_2, z_2) \in \mathbb{R}^{3}$ with $q_1\neq q_2$ in $\mathfrak{e}(3)$, is the $1$-dimensional vector space $\langle (q_1 - q_2, q_2 \times q_1 )\rangle$. In matrix form, these are the real multiples of
        \begin{equation*}
    \begin{bmatrix}
     0 & -(z_1 - z_2) &(y_1- y_2)& y_2 z_1 - y_1 z_2 \\
    (z_1 - z_2)  & 0  & -(x_1 - x_2)& x_1 z_2 - x_2 z_1 \\
    -(y_1- y_2) & (x_1 - x_2)  & 0  &  x_2 y_1- x_1 y_2\\
    0 & 0 & 0 & 0
    \end{bmatrix} 
     \end{equation*}

\end{example}

\subsection{Motion sheaves from graphs of groups}\label{sec: basics}

Let $G$ be a Lie group. Throughout this section, we fix a Lie graph-of-groups realisation $\rho: V\cup E\rightarrow S(G): x\mapsto \rho(x)$, such that for each hyperedge $e$, one has $\rho(e) = \cap_{v\in e} \rho(v)$. This simplifies the description of the vector space of infinitesimal motions. Indeed, it is easy to show that the resulting vector space of infinitesimal motions of $\rho$ is isomorphic to the vector space 
\begin{align*}
     (\overline{w_{v_1}}, \dots, \overline{w_{e_{|E|}}} )  &\in \mathfrak{g}/\mathfrak{h}_{v_1}  \times \dots \times \mathfrak{g}/\mathfrak{h}_{v_{|V|}} \times \mathfrak{g}/\mathfrak{h}_{e_1}  \times \dots \times \mathfrak{g}/\mathfrak{h}_{e_{|E|}},  \\
    &\textup{ such that } w_{e} - w_{v} \in \mathfrak{h}_v \text{ if }v \in e. 
\end{align*}
 The assumption that $\rho(e) = \bigcap_{v\in V} \rho(v)$ holds in many  graph-of-groups realisations arising from combinatorial-geometric problems, with a notable exception being projective rigidity. The results in the current subsection are, in fact, easily adaptable to more general Lie-graph-of-groups realisations (i.e., those where $\rho(e) \neq \bigcap_{v\in V} \rho(v)$), but the results from the next section do not. 
 
We define the \textit{motion sheaf} $\mathcal{M}$ associated to $\rho$. Let
\begin{align*}
    \mathcal{M}(x) &= \mathfrak{g}/\mathfrak{h}_x &\text{ for } x\in V(I(\Gamma)).\\
    \mathcal{M}(v\sim e) &= \mathfrak{g}/\mathfrak{h}_v &\text{ for } v\sim e \in E(I(\Gamma)).
\end{align*}
Where let $$r^{e}_{v \sim e}: \mathfrak{g}/\mathfrak{h}_e \rightarrow \mathfrak{g}/\mathfrak{h}_{v}: w \mod \mathfrak{h}_e \mapsto w \mod \mathfrak{h}_v$$ and 
we let $r^{v}_{v \sim e}: \mathfrak{g}/\mathfrak{h}_v \rightarrow \mathfrak{g}/\mathfrak{h}_v$ be the identity. We see that the $0$th cohomology group is
\begin{align*}
    H^{0}(I(\Gamma), \mathcal{M}) = \{ (w_{x})_{x\in V\cup E} \in \bigoplus_{x\in V\cup E} \mathfrak{g}/\mathfrak{h}_x ~\vert ~ w_e - w_{v} = 0 \mod  \mathfrak{h}_v ~\forall  v\sim e \in I\}
\end{align*}
It is easily verified that $H^{0}(I(\Gamma), \mathcal{M})$ is isomorphic to the space of infinitesimal motions. By \Cref{thm: Maxwell-rule}, we have
\begin{align*}
    \dim(H^{0}(I(\Gamma), \mathcal{M})) - \dim(H^{1}(I(\Gamma), \mathcal{M})) &= \sum_{x\in V\cup E} \dim(\mathfrak{g}/\mathfrak{h}_{x})-\sum_{v\sim e \in I} \dim(\mathfrak{g}/\mathfrak{h}_{v}-\\
     &= \sum_{x\in V} \dim(\mathfrak{g}/\mathfrak{h}_{v}) + \sum_{e \in E}\left( \dim(\mathfrak{g}/\mathfrak{h}_{e}) - \dim(\sum_{v\in e} \mathfrak{g}/\mathfrak{h}_{v}) \right).
\end{align*}
We note that this implies \Cref{thm: weak_maxwell}, since $\dim(H^{1}(I(\Gamma), \mathcal{M})) \geq 0$. In addition, one has that the long exact sequence $\eqref{les}$ carries essential information about rigidity. If $\Gamma$ is connected, one has $H^{0}(\Gamma, \underline{\mathfrak{g}}) = \mathfrak{g}$ and the map $H^{0}(I(\Gamma), \underline{\mathfrak{g}}) \rightarrow H^{0}(I(\Gamma), \mathcal{M})$ is the map $i$ from \Cref{lem: Trivial_motions}, and $\ker(i) \cong H^{0}(I(\Gamma), \mathcal{S})$. One has that \Cref{thm: Necessary condition} gives, that if $H^{0}(I(\Gamma), \mathcal{M}) = \mathfrak{g}$ and $H^{1}(I(\Gamma), \mathcal{M}) = 0$, then for any sub hypergraph $(V', E')$ with $\bigcap_{v \in V'} \mathfrak{h}_{v} = 0$, one has 
\begin{equation*}
    \lambda |E'| \leq  d |V'| -\dim(G).
\end{equation*}
for the parameters
\begin{align*}
    \lambda &= \sum_{v\in e}\dim(\mathfrak{g}/\mathfrak{h}_{v}) - \dim(\mathfrak{g}/\mathfrak{h}_{e})\\
    d &= \dim(\mathfrak{g}/\mathfrak{h}_{v}),
\end{align*}
If $\bigcap_{v\in V} \mathfrak{h}_{v}=0$, one has for the entire hypergraph:
\begin{equation*}
    \lambda |E| = d|V| -\dim(G).
\end{equation*}
This implies the sparsity condition from \cite{graphsofgroups}.

\begin{example}\label{ex: Se(d) Maxwell}
    Let $\rho$ be a graph-of-groups realisation as in \Cref{ex: se(d)}, such that $\rho(v)\neq \rho(w)$ for all edges $vw\in E$. One then gets
    \begin{align*}
        \dim(H^{0}(I(\Gamma), \mathcal{M})) - \dim(H^{1}(I(\Gamma), \mathcal{M})) & = d |V| +(2d-1)|E|- 2d |E|.\\
        &= d|V| -|E|
    \end{align*}
If $H^{1}(I(\Gamma), \mathcal{M})= 0$, then one has the following sparsity condition, under mild geometric conditions. If $k$ points $p_1, \dots, p_k$, with stabilisers $\mathfrak{h}_1,\dots \mathfrak{h}_k$ span a subspace of dimension at least $d-1$, then $\bigcap_{i=1}^{k} \mathfrak{h}_{p_i}= 0$. So, if $\rho$ comes from a bar joint framework such that the points $\{p(v)~\vert~v\in V\}$ are in general position, then for any subset with more than $d$ points one has 
        \begin{align*}
        |E'| &\leq d|V'| -\binom{d+1}{2}.
        \end{align*}
        Moreover, if $|V|\geq d$ and if $p$ is infinitesimally rigid, one has 
        \begin{align*}
        |E| &= d|V| -\binom{d+1}{2}.
        \end{align*}
    This is of course the classical sparsity condition. 
\end{example}

\subsection{Genericity of infinitesimal rigidity for graph-of-groups realisations.}\label{sec: genericity for graph of groups realisations}
The goal of this section is to show that the motion sheaves arising from graph-of-groups realisations can be seen as smooth semi-algebraic subsets of $M_{s, n}$. Applying the semi-continuity result from \Cref{thm: Genericity_generalised_frameworks.} will then imply that their rigidity and independence are generic properties. For clarity, when given an algebraic group $G$ defined over $\mathbb{R}$, we shall use $G(\mathbb{R})$ to denote the (Lie) group of real points. Given an algebraic group $G$ defined over $\mathbb{R}$, by an algebraic group action of $G(\mathbb{R})$ on a real algebraic variety $X$, we mean an action $G(\mathbb{R}) \times X \rightarrow X$ that is given by regular functions (i.e. by polynomials).

\begin{proposition}\label{dense-Euclidean}
    Let $X \subseteq \mathbb{R}^{n}$ be an irreducible semi-algebraic set which is also a smooth manifold. Then, if $U\subseteq \mathbb{R}^{n}$ is a Zariski open set, then either $U\cap X$ is empty, or $U\cap X$ is dense and open for the Euclidean topology on $X$.  
\end{proposition}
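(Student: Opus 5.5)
The proof separates openness from density. Since $U$ is Zariski open, its complement is $Z=V(f_1,\dots,f_k)$ for finitely many polynomials $f_i$. Zariski closed sets are Euclidean closed, so $U\cap X = X\setminus Z$ is open in $X$ for the Euclidean topology. What remains is to show that if $U\cap X\neq\emptyset$, then $X\setminus Z$ is dense in $X$. Equivalently, $Z\cap X$ has empty interior in $X$.

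Suppose, for contradiction, that $Z\cap X$ contains a nonempty Euclidean-open subset $W$ of $X$. Since $U\cap X\neq \emptyset$, some $f_i$ does not vanish identically on $X$; fix such an $f=f_i$. Then $f$ vanishes on $W$ but not on all of $X$. Here irreducibility enters. I read ``irreducible semi-algebraic set'' as saying that the Zariski closure $\overline{X}^{Zar}$ is irreducible, so its ideal $I(X)$ is prime. I will show that the Zariski closure of $W$ equals that of $X$; then $f\in I(W)=I(X)$, so $f$ vanishes on $X$, which is a contradiction.

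For that step I use dimension theory of semi-algebraic sets (Bochnak--Coste--Roy, Ch.~2). Since $X$ is a smooth manifold, it has pure dimension $m$, so every nonempty open subset $W$ of $X$ is a semi-algebraic set of dimension $m$. Shrinking $W$ to a small semi-algebraic ball in a chart keeps it semi-algebraic. Moreover $\dim X=\dim \overline{X}^{Zar}=m$. Then $\overline{W}^{Zar}\subseteq \overline{X}^{Zar}$ is a Zariski closed subset of the same dimension $m$. Because $\overline{X}^{Zar}$ is irreducible, any proper closed subvariety has strictly smaller dimension (BCR Prop.~2.8.13). Hence $\overline{W}^{Zar}=\overline{X}^{Zar}$, and $f$ vanishes on $X$.

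The main obstacle is this dimension-equality step: one must use the manifold hypothesis to rule out lower-dimensional pieces of $X$. For a non-pure semi-algebraic set, for instance the Whitney umbrella with its handle, an open piece could be lower-dimensional and the claim fails. Smoothness gives purity of dimension, which together with the irreducibility of the Zariski closure closes the argument. Combining the two parts, $U\cap X$ is Euclidean open, and it is either empty or dense in $X$.
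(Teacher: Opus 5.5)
Your proof is correct and is essentially the paper's argument. Both rest on the same two facts. First, irreducibility forces a proper Zariski-closed subset of $X$ (here $X\cap Z$) to have strictly smaller dimension. Second, the manifold hypothesis forces every nonempty Euclidean-open piece of $X$ to have full dimension $\dim X$. The paper reaches the contradiction through the local dimension $\dim(V_x)$ (Bochnak--Coste--Roy, Prop.~2.8.14), whereas you show that the Zariski closure of a small semi-algebraic ball equals that of $X$, and you make explicit the shrinking of $W$ to a semi-algebraic set, which the paper leaves implicit.
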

\begin{proof}
Let $V=X\setminus U$, which is defined by some polynomial $p$, i.e. $V = \{ x\in X ~\vert~p(x) = 0\}$ \cite[Proposition 2.1.3.]{Bochnak1998}. We see that $V$ is a Zariski closed subset of $X$, and hence by irreducibility, either $V=X$ (in which case $U\cap X= \emptyset$), or $V$ is semi-algebraic subset of dimension $ k < \dim(X)$. In the latter case, since $X$ is a smooth manifold, for every $x\in V$ and every open ball $O\subseteq X$ (for the Euclidean topology) which contains $x$, $O\cap V\neq O$. Indeed, otherwise the local dimension satisfies $\dim(V_x) = \dim(X)$ by \cite[Proposition 2.8.14]{Bochnak1998}, which is a contradiction since $\dim(V_x)\leq k < \dim(X)$.
\end{proof}

\begin{lemma}\label{lem: action-V-action-gr(k,V)}
    Let $G$ be an algebraic group defined over $\mathbb{R}$. Suppose that $\psi: G(\mathbb{R})\rightarrow \textup{Gl}(\mathbb{V})$ is a representation of a real algebraic group, i.e. the map $G(\mathbb{R})\times \mathbb{V} \rightarrow \mathbb{V}$ is regular and $\psi(g)$ is a linear map for each $g\in G$. Then $G(\mathbb{R})$ acts algebraically on $\textup{Gr}(s, \mathbb{V})$.
\end{lemma}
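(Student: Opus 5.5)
We must show that the induced action $G(\mathbb{R})\times \textup{Gr}(s,\mathbb{V}) \rightarrow \textup{Gr}(s,\mathbb{V})$, $(g,S)\mapsto \psi(g)S$, is given by regular functions. We work in the model of the Grassmannian as symmetric idempotent matrices of trace $s$. After fixing a basis, $\mathbb{V}=\mathbb{R}^n$ with its standard inner product, so each $S$ corresponds to the orthogonal projection $P$ onto $S$. The task is to write the orthogonal projection onto $\psi(g)S$ as a regular function of the entries of $g$ and $P$. Regularity of $G(\mathbb{R})\times\mathbb{V}\to\mathbb{V}$ means the matrix entries of $A:=\psi(g)$ are regular functions on $G(\mathbb{R})$. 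The entries of $A^{-1}=\psi(g^{-1})$ are also regular, since inversion is a morphism of the algebraic group.

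The key step is an explicit formula for the orthogonal projection onto $AS$. We have $AS=\im(AP)$. For any matrix $B$, the orthogonal projection onto $\im(B)$ is $B B^{+}$, where $B^{+}$ is the Moore--Penrose inverse, but $B^{+}$ is not polynomial in general. I would instead use a formula that is rational with nonvanishing denominator. Consider
\begin{equation*}
Q = AP\,\bigl(PA^{t}AP + (I-P)\bigr)^{-1} PA^{t}.
\end{equation*}
The matrix $M=PA^{t}AP+(I-P)$ is block diagonal with respect to $\mathbb{R}^n=S\oplus S^{\perp}$. On $S$ it acts as the restriction of $A^{t}A$, compressed to $S$, which is positive definite because $A$ is invertible. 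On $S^{\perp}$ it is the identity. Hence $M$ is invertible, and a direct check shows that $Q$ is symmetric, satisfies $Q^2=Q$, and has image $AS$. Therefore $Q$ is the orthogonal projection onto $AS$.

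Regularity follows from the formula. The entries of $Q$ are polynomials in the entries of $A$ and $P$ divided by $\det(M)$, and $\det(M)$ is nowhere zero on $G(\mathbb{R})\times\textup{Gr}(s,\mathbb{V})$. By the definition of regular functions on real varieties in \cite{Bochnak1998} (quotients of polynomials whose denominator does not vanish), the map is regular. Composing with the regular map $g\mapsto\psi(g)$ gives regularity on $G(\mathbb{R})\times \textup{Gr}(s,\mathbb{V})$. The action axioms hold because $\psi$ is a homomorphism and $S\mapsto \psi(g)S$ is functorial.

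The main obstacle is the algebraic verification that $Q$ is idempotent and symmetric. I would do it by restricting to the block decomposition: with respect to an orthonormal basis adapted to $S\oplus S^\perp$, $AP$ has the form $[B\;0]$, and $Q=B(B^{t}B)^{-1}B^{t}$. This is the familiar projection formula, and both properties are immediate from it. An alternative, if one prefers Plücker coordinates, is to note that $\wedge^s\psi$ acts linearly on $\wedge^s\mathbb{V}$ and that the two models are birationally equivalent \cite{devriendt2025two}. The projection-matrix computation above avoids needing that equivalence.
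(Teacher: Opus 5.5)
Your argument is correct, but it takes a different route from the paper's proof. The paper argues in one line via Plücker coordinates, following \cite[Theorem 10.19]{harris2013algebraic}. There, $\wedge^s\psi(g)$ acts linearly on $\wedge^s\mathbb{V}$, with matrix entries given by the $s\times s$ minors of $\psi(g)$. Hence it acts regularly on $\mathbb{P}(\wedge^s\mathbb{V})$ and preserves the Plücker image of $\textup{Gr}(s,\mathbb{V})$.

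You instead work in the affine model of symmetric idempotents of trace $s$ and give the explicit formula $Q=AP\bigl(PA^{t}AP+(I-P)\bigr)^{-1}PA^{t}$. Your checks are sound: $M$ is block-diagonal with respect to $S\oplus S^{\perp}$, the formula reduces to $B(B^{t}B)^{-1}B^{t}$ in an adapted orthonormal basis, and $\det M=\det(B^{t}B)>0$. The formula only uses that $A$ is injective on $S$, so the regularity of $g\mapsto\psi(g)^{-1}$ that you invoke is never actually needed.

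The Plücker route is shorter, needs no auxiliary inner product, and generalises immediately, for instance to complex points or flag varieties. Your route proves regularity directly in the projection-operator coordinates that the paper itself uses in \Cref{lem: Intersections_of_subspaces}, \Cref{thm: Genericity_generalised_frameworks.} and \Cref{thm: Generic}. You therefore never need to pass between the Plücker and projection models, for which the paper only cites a birational equivalence.
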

\begin{proof}
One can use Plücker coordinates and apply the same reasoning as in \cite[Theorem 10.19]{harris2013algebraic}.
\end{proof}

If we take $G$ to be a Zariski-closed subgroup of $\textup{Gl}(n, \mathbb{R})$, one has that the adjoint action of $G(\mathbb{R})$ on $\mathfrak{g} \leq \mathbb{R}^{n^2}$ is simply given by
\begin{equation*}
    \textup{Ad}(g)\cdot X =g X g^{-1}.
\end{equation*}
This action is clearly defined using polynomial functions, and hence we get an algebraic action on $\textup{Gr}(s, \mathfrak{g})$. The orbit of any $\mathfrak{h} \leq \mathfrak{g}$ is thus a semi-algebraic subset of 
$\textup{Gr}(s, \mathfrak{g})$. We can describe classes of motion sheaves arising from graph-of-groups realisations in this way.

\begin{definition}
    Let $G$ be a Lie group and let $H$ be a closed Lie subgroup. Given a hypergraph $\Gamma$, we define $C_{G, H}$ to be the class of graph-of-groups realisations such that for every $v\in V$ and $e\in E$, one has:
    \begin{align*}
        \rho(v) & = g_{v} H g_{v}^{-1} \textup{ for some } g_{v} \in G,\\
        \rho(e) & = \bigcap_{v\in e} \rho(v).
    \end{align*}
    We can identify $C_{G, H}$ with $(G/N_{G}(H))^{V}$, where $N_{G}(H) = \{g \in G~\vert~ gHg^{-1} = H\}$, which gives $C_{G, H}$ the structure of a smooth manifold.
\end{definition}

Suppose that we are given $\rho \in C_{G, H}$. Then the motion sheaf associated to $\rho$ is defined using a sheaf $\mathcal{S}$ such that for every $v$ one has 
$\mathcal{S}(v) = \textup{Ad}(g_{v})\mathfrak{h},$
where $\mathfrak{h}$ is the Lie algebra of $H$. We can hence describe the set of motion sheaves as those motion sheaves belonging to $(G \cdot  \mathfrak{h})^{V} \subseteq \textup{Gr}( s , \mathfrak{g})$, where $G \cdot \mathfrak{h}$ is the orbit of $\mathfrak{h}$ under the adjoint action and $s= \dim(\mathfrak{h})$. Letting $N_{G}(\mathfrak{h}) = \{g\in G~\vert~ \textup{Ad}(g)(\mathfrak{h}) = \mathfrak{h} \}$, one can equivalently describe this set as the image of the injective natural map
\begin{align*}
    (G/N_{G}(\mathfrak{h}))^{V} \rightarrow \textup{Gr}(s, \mathfrak{g})^{|V|}: (g_{v}N_{G}(\mathfrak{h}))_{v\in V} \mapsto  (\textup{Ad}(g_{v})(\mathfrak{h}))_{v \in V}.
\end{align*}

In the case that $H$ is connected, the following lemma shows that one can replace $N_{G}(\mathfrak{h})$ by $N_{G}(H)$.
\begin{proposition}\label{lem: lie-alg-lie-group-norm}
    Suppose that $G$ is a Lie group, and $H$ is a connected closed Lie subgroup. Then $N_G(\mathfrak{h}) = N_G(H)$.
\end{proposition}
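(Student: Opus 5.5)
We prove the two inclusions $N_G(H)\subseteq N_G(\mathfrak{h})$ and $N_G(\mathfrak{h})\subseteq N_G(H)$ separately. The only tools needed are the naturality of the exponential map under conjugation, and the standard fact that a connected Lie group is generated by the image of its exponential map (equivalently, by any neighbourhood of the identity).

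\emph{Step 1: $N_G(H)\subseteq N_G(\mathfrak{h})$.} Let $g\in N_G(H)$. Conjugation $c_g\colon x\mapsto gxg^{-1}$ is a Lie group automorphism of $G$ with differential $\textup{Ad}(g)$ at the identity, and it satisfies $c_g(\exp(tX))=\exp(t\,\textup{Ad}(g)X)$. For $X\in\mathfrak{h}$, the curve $t\mapsto \exp(tX)$ lies in $H$, so $t\mapsto\exp(t\,\textup{Ad}(g)X)=g\exp(tX)g^{-1}$ lies in $gHg^{-1}=H$. Since $H$ is a closed subgroup, its Lie algebra is $\mathfrak{h}=\{Y\in\mathfrak{g} : \exp(tY)\in H \textup{ for all } t\in\mathbb{R}\}$. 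Hence $\textup{Ad}(g)X\in\mathfrak{h}$, so $\textup{Ad}(g)\mathfrak{h}\subseteq\mathfrak{h}$. Equality follows because $\textup{Ad}(g)$ is injective and dimensions agree. Connectedness is not used in this direction.

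\emph{Step 2: $N_G(\mathfrak{h})\subseteq N_G(H)$.} Let $g\in G$ with $\textup{Ad}(g)\mathfrak{h}=\mathfrak{h}$. Since $H$ is connected, it is generated as a group by $\exp(\mathfrak{h})$. For each generator,
\[
g\exp(X)g^{-1}=\exp(\textup{Ad}(g)X)\in\exp(\mathfrak{h})\subseteq H .
\]
Because $c_g$ is a group homomorphism, it maps the subgroup generated by $\exp(\mathfrak{h})$ into itself, so $gHg^{-1}\subseteq H$. Applying the same argument to $g^{-1}$, which also lies in $N_G(\mathfrak{h})$, gives $g^{-1}Hg\subseteq H$, that is, $H\subseteq gHg^{-1}$. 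Therefore $gHg^{-1}=H$.

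\emph{Main point of care.} The crux is the generation statement in Step 2. This is exactly where connectedness is essential: for disconnected $H$, conjugation by $g$ could permute or move the non-identity components while still fixing $\mathfrak{h}$. Otherwise the argument is routine and should be quoted from standard Lie theory. For the proof that $\exp(\mathfrak{h})$ generates $H$, it suffices that $\exp(\mathfrak{h})$ contains an open neighbourhood of the identity in $H$, and an open neighbourhood of the identity generates any connected topological group.
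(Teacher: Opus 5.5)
Your proof is correct and follows the same route as the paper: a connected $H$ is generated by $\exp(\mathfrak{h})$, and $g\exp(X)g^{-1}=\exp(\textup{Ad}(g)X)$ then gives $N_G(\mathfrak{h})\subseteq N_G(H)$. You also prove the inclusion $N_G(H)\subseteq N_G(\mathfrak{h})$, which the paper only asserts ``always holds'', and your $g^{-1}$ argument makes explicit how $gHg^{-1}\subseteq H$ becomes equality.
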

\begin{proof}
Since $H$ is connected, any element of $H$ can be written as a product $e^{X_1},\dots, e^{X_n}$, where $X\mapsto e^{X}$ denotes the exponential map. Thus, if for elements $X_i \in \mathfrak{h}$ one has $ge^{X_i}g^{-1}=e^{X_i'}$ for some $X_i'\in \mathfrak{h}$, then $gHg^{-1} = H$. Since $ge^{X_i}g^{-1} = e^{(\textup{Ad}(g)(X_i))}$ (by naturality of the exponential map), it follows that $N_G(\mathfrak{h}) \subseteq N_G(H)$. Since the reverse inclusion always holds, this completes the proof.
\end{proof}

\begin{theorem}\label{thm: Generic}
    Let $\Gamma$ be a hypergraph. Let $G\subseteq Gl_n$ be a connected linear algebraic group defined over $\mathbb{R}$, and let $H\subseteq G(\mathbb{R})$ be a closed and connected subgroup (for the Euclidean topology). Then, there is a dense open subset $U \subseteq C_{G(\mathbb{R}),H}$ (for the Euclidean topology) such that for all $\rho\in U$, the associated motion sheaves $h^{0}(I(\Gamma), \mathcal{M})$ and $h^{1}(I(\Gamma), \mathcal{M})$ have the same constant dimension on $U$. 
\end{theorem}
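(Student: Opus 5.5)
The plan is to realise the set of motion sheaves coming from $C_{G(\mathbb{R}),H}$ as a smooth irreducible semi-algebraic subset of $\textup{Gr}(s,\mathfrak{g})^{|V|}$, and then combine the Zariski upper semi-continuity of \Cref{thm: Genericity_generalised_frameworks.} with \Cref{dense-Euclidean}.

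First, set $s=\dim\mathfrak{h}$ and consider the map
\begin{equation*}
\Phi: G(\mathbb{R})^{V}\rightarrow \textup{Gr}(s,\mathfrak{g})^{|V|},\qquad (g_v)_{v\in V}\mapsto (\textup{Ad}(g_v)\mathfrak{h})_{v\in V}.
\end{equation*}
By \Cref{lem: action-V-action-gr(k,V)} applied to the adjoint representation, $\Phi$ is regular. Hence its image $X=(G(\mathbb{R})\cdot\mathfrak{h})^{V}$ is semi-algebraic by Tarski--Seidenberg.

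Second, $X$ is a smooth manifold. Each factor is an orbit of a Lie group action, hence an immersed submanifold. Since the orbit is semi-algebraic, it is in fact an embedded submanifold, being locally closed; this is the standard fact for algebraic actions. By \Cref{lem: lie-alg-lie-group-norm}, $X$ is identified with $(G(\mathbb{R})/N_{G}(H))^{V}$, i.e.\ with $C_{G(\mathbb{R}),H}$, and this identification is a homeomorphism onto $X$. Transferring topologies is therefore legitimate.

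Third, and this is the main obstacle, $X$ must be irreducible in the sense needed for \Cref{dense-Euclidean}, i.e.\ its Zariski closure must be irreducible. Since $G$ is connected, $G(\mathbb{R})$ may be disconnected, but the Zariski closure of $\Phi(G(\mathbb{R})^{V})$ is the closure of the image of the irreducible variety $G^{V}$. The point to check is that $G(\mathbb{R})$ is Zariski dense in $G$, which holds for connected groups over $\mathbb{R}$ (a theorem of Rosenlicht/Borel). If that is delicate, I would instead work with the identity component $G(\mathbb{R})^{0}$, which is a connected semi-algebraic set with Zariski-irreducible closure. The finitely many components of $X$ would then be translates of each other and handled one at a time. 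Density in each component gives density in $X$.

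Fourth, the hypothesis of \Cref{thm: Genericity_generalised_frameworks.}, that the intersection dimensions $\dim\bigcap_{v\in e}S_v$ are constant, fails on all of $X$. I would handle this by stratification:
\begin{itemize}
\item By \Cref{lem: Intersections_of_subspaces}, each function $\dim\bigcap_{v\in e}S_v$ is Zariski upper semi-continuous.
\item So the locus $U_0\subseteq X$ where all of them attain their minimal values on $X$ is a Zariski-open subset of $X$.
\item $U_0$ is nonempty, since each minimum is attained on a Zariski-open set, and a finite intersection of such sets is nonempty by irreducibility.
\end{itemize}
On $U_0$, \Cref{thm: Genericity_generalised_frameworks.} gives that $h^{0}$ and $h^{1}$ are Zariski upper semi-continuous. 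The loci where they attain their minimum are therefore nonempty Zariski-open subsets $U_1\subseteq U_0$. Applying \Cref{dense-Euclidean} to $U_1$ shows that it is Euclidean dense and open in $X$. Pulling $U_1$ back to $C_{G(\mathbb{R}),H}$ yields the set $U$ of the theorem, on which $h^0$ and $h^1$ are constant.
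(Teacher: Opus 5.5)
Your proposal is correct and follows essentially the same route as the paper. Both arguments show that the orbit $X$ is semi-algebraic and irreducible, take a Zariski-open set on which the intersection dimensions are minimal, apply the semi-continuity theorem there, show $X^{|V|}$ is an embedded submanifold so that $(G(\mathbb{R})/N_{G(\mathbb{R})}(H))^{|V|}\to X^{|V|}$ is an embedding, and transfer Euclidean density via the density proposition. Your appeal to Zariski density of $G(\mathbb{R})$ in the connected group $G$ (Rosenlicht/Borel) makes explicit the irreducibility of $G(\mathbb{R})$ that the paper uses without comment. The component-by-component fallback is unnecessary, and on its own it would not guarantee one common value of $h^{0},h^{1}$ across components.
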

\begin{proof}
     Let $X = G(\mathbb{R}) \cdot \mathfrak{h}$ be the semi-algebraic subset of $\textup{Gr}(s,\mathfrak{g})$. One sees that $X$ is irreducible, being the image of an irreducible subset under the regular map $G(\mathbb{R})\rightarrow \textup{Gr}(s,\mathfrak{g}): g \mapsto \textup{Ad}(g)\mathfrak{h}$, and $X^{|V|}$ is irreducible as well \cite[Theorem 2.8.3.(iii)]{Bochnak1998}. 
    By \Cref{lem: Intersections_of_subspaces}, we see that there is a Zariski open subset of $O\subseteq X^{|V|}$ such that $\dim(\bigcap_{v\in e} A_v)$ is minimal for all $e\in E$ and $(A_1, \dots, A_{|V|})\in O$. By \Cref{thm: Genericity_generalised_frameworks.}, we see that there is a nonempty Zariski open subset $U$ of $O$, such that $h^{0}(I(\Gamma), \mathcal{M})$ and $h^{1}(I(\Gamma), \mathcal{M})$ are minimal on $U$. 
    
    Since $H$ is connected, by \Cref{lem: lie-alg-lie-group-norm} and by the discussion above,  the map $$\varphi: (G(\mathbb{R})/N_{G(\mathbb{R})}(H))^{|V|} \rightarrow \textup{Gr}(s, \mathfrak{g})^{|V|}: (g_{v}N_{G(\mathbb{R})}(H))_{v\in V} \rightarrow (\textup{Ad}(g_{v})(\mathfrak{h}))_{v\in V}$$
    is a bijective map onto $X^{|V|}$, and by generalities on orbits, it is an immersion (see \cite[Theorem 3.29]{Kirillov}). Note that there exists an $x\in X^{|V|}$ and some $W\subseteq M_{s,n}(\Gamma)$ which is open for the Euclidean topology such that $x\in W$ and $W\cap X^{|V|}$ is a smooth manifold (this follows since $X^{|V|}$ admits a Nash stratification, \cite[Definition 9.1.7, Proposition 9.1.8]{Bochnak1998}). Then, since for any $y\in X^{|V|}$, there exists a $g\in G^{|V|}$ such that $y = g \cdot x$, we see that $X^{|V|} \cap g W$ is a manifold. Since this holds for arbitrary $y$, we see that $X^{|V|}$ is an embedded submanifold. 
    
    It follows that $\varphi$ is an embedding. The set $U\subseteq X^{|V|}$ is dense and open with respect to the Euclidean topology by Proposition \ref{dense-Euclidean}, since $X^{|V|}$ is smooth and irreducible. Hence, since $\varphi$ is an embedding, $\varphi^{-1}(U)$ satisfies the conclusion of the theorem. 
\end{proof}

The above theorem may be reformulated as rigidity and independence being generic properties for motion sheaves coming from graphs of groups, when $G$ is connected.
In the case where $G$ is a Lie group, $\dim(H)=1$, and if $N_{G}(H)/H$ is finite, the closure of the image $\varphi(G/H)$ is particularly well-behaved. 

\begin{theorem}\label{cor: main - thm}
    Let $\Gamma$ be a graph. Let $G$ be a Lie group, and suppose that $H\subseteq G$ is a $1$-dimensional connected subgroup such that $N_{G}(H)/H$ is finite. Then, for any graph $\Gamma$, there is a dense open subset of $\mathcal{M} \in C_{G, H}^{|V|}$ such that $h^{1}(I(\Gamma), \mathcal{M}) = 0$ if and only if $(\dim(G) -2)\cdot \Gamma$ is $(\dim(G) - 1, \dim(G))$ sparse.
\end{theorem}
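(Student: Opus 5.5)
The plan is to reduce \Cref{cor: main - thm} to \Cref{thm: main theorem} with $n=\dim(G)$ and $s=1$, by showing that the motion sheaves coming from $C_{G,H}$ fill up a Euclidean-open piece of $M_{1,n}(\Gamma)$. Let $\mathfrak{h}$ be the (one-dimensional) Lie algebra of $H$. Every $\rho\in C_{G,H}$ gives a motion sheaf with $\mathcal{S}(v)=\textup{Ad}(g_v)\mathfrak{h}\in \textup{Gr}(1,\mathfrak{g})=\mathbb{P}(\mathfrak{g})$. Since $H$ is connected, \Cref{lem: lie-alg-lie-group-norm} gives $N_G(\mathfrak{h})=N_G(H)$. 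Hence the orbit map $G/N_G(H)\to \mathbb{P}(\mathfrak{g})$ is injective, and it is an immersion (as in the proof of \Cref{thm: Generic}).

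\textbf{Step 1: the orbit is open.} Because $N_G(H)/H$ is finite, $\dim(G/N_G(H))=\dim(G)-1=n-1=\dim\mathbb{P}(\mathfrak{g})$. An injective immersion between manifolds of equal dimension is a local diffeomorphism, so the orbit $X=G\cdot\mathfrak{h}$ is a Euclidean-open subset of $\mathbb{P}(\mathfrak{g})$. It follows that $X^{|V|}$ is a nonempty Euclidean-open subset of $M_{1,n}(\Gamma)\cong \textup{Gr}(1,\mathfrak{g})^{|V|}$.

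\textbf{Step 2: transfer the Zariski-generic statement.} \Cref{thm: main theorem} gives a nonempty Zariski open $U\subseteq M_{1,n}(\Gamma)$ on which $H^1(I(\Gamma),\mathcal{M})=0$ holds exactly when $(n-2)\Gamma$ is $(n-1,n)$-sparse. The space $\textup{Gr}(1,\mathfrak{g})^{|V|}$ is an irreducible smooth variety, so $U$ is Euclidean dense with complement of lower dimension (\Cref{dense-Euclidean}). Intersecting $U$ with the open set $X^{|V|}$ therefore gives a dense open subset of $X^{|V|}$. Pulling it back along the diffeomorphism $\varphi:(G/N_G(H))^{|V|}\to X^{|V|}$ gives the required dense open subset of $C_{G,H}$. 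The paper writes $C_{G,H}^{|V|}$, but this is the realisation space itself.

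\textbf{Step 3: the necessity direction.} This should hold for every realisation in the dense open set, or at least for generic ones. It follows from \Cref{cor: necessary_concrete} with $r=2$ and $s=1$, provided two hypotheses hold: $\mathcal{S}(u)\cap\mathcal{S}(v)=0$ for distinct vertices, and $\bigcap_{v\in V'}\mathcal{S}(v)=0$ for $|V'|\ge 2$. For lines, both hypotheses just say that the lines $\mathcal{S}(v)$ are pairwise distinct. That is a Zariski open condition by \Cref{lem: Intersections_of_subspaces}, and it is nonempty on $X^{|V|}$ because $X$ is open in $\mathbb{P}(\mathfrak{g})$ and hence infinite. So we shrink the open set accordingly.

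\textbf{Main obstacle.} I expect the main obstacle to be Step 1. The delicate points are that the orbit map is an immersion with injectivity modulo $N_G(H)$, and the dimension count. The finiteness of $N_G(H)/H$ is exactly what makes the orbit full-dimensional, so that generic behaviour in $M_{1,n}$ is witnessed inside $C_{G,H}$.

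A second subtlety is connectivity. If $G$ is not connected, $C_{G,H}$ may have several components, and each orbit component must be handled separately. Every component is still open in $\mathbb{P}(\mathfrak{g})$, so Step 2 applies to each one. Note that a Zariski-dense open set intersects every nonempty Euclidean-open set in a dense open subset, so the argument goes through on each component.
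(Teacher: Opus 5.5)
Your proposal is correct and follows the paper's own argument. The orbit map $G/N_G(H)\to\mathbb{P}(\mathfrak{g})$ is an injective immersion between manifolds of equal dimension (this is where finiteness of $N_G(H)/H$ enters), hence an open embedding, and the nonempty Zariski open set from \Cref{thm: main theorem} meets the Euclidean-open image in a dense open subset by \Cref{dense-Euclidean}, which you then pull back along $\varphi$. Your Step 3 additionally makes explicit that the necessity direction needs the lines $\mathcal{S}(v)$ to be pairwise distinct, a point the paper leaves implicit inside \Cref{thm: main theorem}.
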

\begin{proof}
    We have that $\textup{Gr}(1, \mathfrak{g})$ is a projective space, which we denote by $P(\mathfrak{g})$. The map $G/N_{G}(H) \rightarrow \mathbb{P}(\mathfrak{g})$ is an immersion, and since $\dim( G/N_{G}(H)) = \dim(P(\mathfrak{g}))$, it is an embedding \cite[Proposition 5.21(a)]{Lee}. Thus, $\varphi: C_{G, H}^{|V|}\rightarrow \mathbb{P}(\mathfrak{g})^{|V|}$ is also an embedding, and $\varphi(C_{G,H}^{|V|})$ is hence an open subset for the Euclidean topology on $\textup{Gr}(1, \mathfrak{g})$. By Theorem \ref{thm: main theorem} and Proposition \ref{dense-Euclidean}, since $P(\mathfrak{g})$ is smooth, we see that for every graph $\Gamma$, there is a dense open subset $U\subseteq P(\mathfrak{g})^{|V|}$ such that $H^{1}(I(\Gamma), \mathcal{M}) = 0$ if and only if $(n-2)\cdot \Gamma$ is $(n-1, n)$-tight. Now, since it is easy to show that for any open set $O\subseteq P(\mathfrak{g})$, one has that $U\cap O$ is dense and open in $O$, we see that $\varphi^{-1}(U)$ is open in $C_{G, H}^{V}$. 
\end{proof}

One can thus take \textit{any Lie group $G$}, and \textit{any} $1$-dimensional connected subgroup $H \subseteq G(\mathbb{R})$ such that $N_{G(\mathbb{R})}(H)/H$ is finite and get a characterisation. If $H$ is not connected, taking the connected component $H_0$ of $H$ leads to the same notions of local rigidity and infinitesimal rigidity, so even when $H$ is not connected, one gets a characterisation (granted that $N_G(H_0)/H_0$ is finite).

If one takes $E(2)$ and $O(2)$ as in \Cref{ex: se(d)}, one recovers the Geiringer-Laman theorem as an instance of \Cref{cor: main - thm}. Using the groups $PSL_{2}$ and $O(3)$ instead, it follows that the rigid graphs on the hyperbolic plane and the sphere are characterised by the same sparsity condition as for the Euclidean plane. 

Taking $1$-dimensional subgroups of $E(d)$, one sees that the resulting graph-of-groups realisations are minimally rigid if $\left(\binom{d+1}{2} - 2\right)\cdot \Gamma$ is $\left(\binom{d+1}{2} - 1, \binom{d+1}{2}\right)$ tight. Geometrically, this corresponds to each vertex representing an $n-2$ dimensional subspace (for example, a line in $3$-space), and if there is an edge between two such $n-2$ subspaces, then one is only allowed to rotate around the other. Such a framework can be interpreted as a special case of the $(n-2, 2)$ frameworks in \cite{Tay1989}, also known as panel-bar frameworks, where there are either exactly $\binom{n+1}{2}- 2$ or $0$ edges between each pair of $n-2$ dimensional subspaces. Thus, in this case, we recover a special case of Tay's result. Perhaps by interpreting the associated sheaves in a suitable way, one can fully recover Tay's result as a special case of \Cref{thm: Genericity_F}.

We end this section by describing the semi-algebraic sets parametrising the motion sheaves coming from Euclidean rigidity when $d=2$ and $d=3$. The groups $SE(d)$ can be considered as algebraic groups. We will simply write $SE(d)$ instead of $SE(d)(\mathbb{R})$.

\begin{example}
We consider $G=SE(2)$ and $H=SO(2)$, where $H$ is seen as the stabiliser of the point $(0,0)$. We let $\mathfrak{o}_2(\mathbb{R})\subseteq \mathfrak{e}_2(\mathbb{R})$ be the Lie algebra corresponding to the $H$, which is then given by the matrices
\begin{equation*}
    \begin{bmatrix}   0 & -\omega & 0\\
        \omega & 0 & 0\\
        0 & 0 & 0
    \end{bmatrix},
\end{equation*}
with $\omega\in \mathbb{R}$. We note that $\mathfrak{o}_{2}(\mathbb{R})$ defines a point of $\textup{Gr}(1,\mathfrak{e}_2(\mathbb{R})) = \mathbb{R}P^{2}$. One can explicitly verify that if $$g=
        \begin{bmatrix}
        \cos(\theta) & - \sin(\theta) & a\\
        \sin(\theta) & \cos(\theta) &b\\
        0 & 0 & 1
    \end{bmatrix},
$$
then
\begin{align*}
    \textup{Ad}(g)(\mathfrak{o}_2(\mathbb{R})) &=
\{ \begin{bmatrix}
        0 & -\omega & \omega b \\
        \omega & 0 & -\omega a \\
        0 & 0 & 0
    \end{bmatrix}~ \in \mathfrak{e}_{2}(\mathbb{R})~ \vert~ \omega \in \mathbb{R}\}.
\end{align*}
One way to compute this is to pick the specific element $$g'= \begin{bmatrix}
    1 & 0 & a\\
    0& 1  & b\\
 0 & 0 & 1
\end{bmatrix}$$
that represents the coset $g\cdot O(2)$ and calculate $Ad(g')\mathfrak{o}_2(\mathbb{R}) = g'\mathfrak{o}_2(\mathbb{R})(g')^{-1}$ for this specific $g'$.
In any case, when $$g\in \begin{bmatrix}
    1 & 0 & a\\
    0& 1  & b\\
 0 & 0 & 1
\end{bmatrix}O(2),$$ one sees that $g\mathfrak{o}_{2}g^{-1}$ is the stabiliser of the point $(a,b) \in \mathbb{R}^2$. Thus, the image of the map $\varphi: G/H\rightarrow \mathbb{R}P^2$ can be seen as an affine open subset of $\mathbb{R}P^2$, and its closure is $\mathbb{R}P^2$. 
\end{example}

\begin{example}
The example of $3$-dimensional rigidity is more complicated. We let $G=SE(3)$, and we let $H=SO(3)$, again viewed as the stabiliser of the identity in $\mathbb{R}^{3}$. We see that for $$g = \begin{bmatrix}   1 & 0 & 0 & a\\
        0 & 1 & 0 & b\\
        0 & 0 & 1 & c \\
        0 & 0 & 0 & 1
    \end{bmatrix},$$

the algebra $\textup{Ad}(g)\mathfrak{o}(3)$ is given by
\begin{align*}
    \textup{Ad}(g)(\mathfrak{o}_3(\mathbb{R})) &=
    \begin{bmatrix}   0 & -\omega_3 & \omega_2 & \omega_3 b -\omega_2 c\\
        \omega_3 &0  & - \omega_1 & - \omega_3 a +\omega_1 c\\
        - \omega_2& \omega_1 & 0 & \omega_2 a - \omega_1 b \\
        0 & 0 & 0 & 0
    \end{bmatrix},
    \end{align*}
    where $\omega_1, \omega_2, \omega_3\in \mathbb{R}$, which is the stabiliser of the points $(a,b,c)\in \mathbb{R}^{3}$. Let us now consider what the image of the map $\varphi: G/H \rightarrow Gr(3, \mathfrak{e}(3))$ is.  It is a $3$-dimensional linear subspace inside of this Grassmannian (which is $9$ dimensional). Take the coordinates $\omega_1, \omega_2, \omega_3, a, b, c$, and the standard basis vectors for these coordinates. Then, any stabiliser can be described with a basis given by the rows of the following matrix
    \begin{align*}
    \begin{bmatrix}   
    1 & 0 & 0 & 0 & z & - y\\
    0 & 1 & 0 & -z & 0 & x\\
    0 & 0 & 1 & y & -x & 0\\
    \end{bmatrix}.
    \end{align*}
    Using coordinates on the Grassmannian given by:
    \begin{align*}
    \begin{bmatrix}   
    1 & 0 & 0 & a_{1,1} & a_{1,2} & a_{1,3}\\
    0 & 1 & 0 & a_{2,1} & a_{2,2} & a_{2,3}\\
    0 & 0 & 1 & a_{3,1} & a_{3,2} & a_{3,3}\\
    \end{bmatrix},
    \end{align*}
    we see that $\varphi(G/H)$ is cut out by the equations 
    \begin{equation*}
    \begin{array}{clclcl}
    a_{1,1} &=0 & a_{2,2} &=0 & a_{3,3} &=0 \\
    a_{1,2} &=-a_{2,1} & a_{1,3} &=-a_{3,1} & a_{2,3} &=-a_{3,2}\\
    \end{array}
    \end{equation*}
    Thus, we see that with these coordinates $\overline{\varphi(G/H)}$ is the intersection of $\textup{Gr}(3,6)$ with a linear space. The resulting subvariety is, however, quite special. Indeed, one has that for any two subspaces $A_1\in \overline{\varphi(G/H)}$ and $A_2\in \overline{\varphi(G/H)},$ one has $\dim(A_1\cap A_2) \geq 1$, while for generic $A_1, A_2\in \textup{Gr}(3,6),$ one has $\dim(A_1\cap A_2) =0$.
\end{example}

\section{Examples}
\subsection{Inductive constructions for Euclidean rigidity}\label{sec: inductive constructions gogs}
We make some remarks about how  \Cref{thm: k-extension} applies to graph-of-groups realisations coming from bar-joint frameworks. In \cite{graphsofgroups}, it was noticed that $3$ distinct points $x,y,z\in \mathbb{R}^{2}$ are collinear if and only if $\mathfrak{h}_x + \mathfrak{h}_y + \mathfrak{h}_z$ has dimension $2$, where $\mathfrak{h}_x$ is the Lie algebra of the stabiliser of $x$. This generalises this to arbitrary dimension. 

\begin{proposition}\label{prop: collinear}
Let $d\geq 2$, and let $x,y,z\in \mathbb{R}^{d}$ be distinct points. Let $\mathfrak{h}_x, \mathfrak{h}_y, \mathfrak{h}_z \subseteq \mathfrak{e}(d)$ be the stabilisers of these points. The points $x,y,z$ are collinear if and only if $\mathfrak{h}_x \subseteq \mathfrak{h}_y + \mathfrak{h}_z$
\end{proposition}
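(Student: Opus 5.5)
The plan is a direct computation in the coordinates of \Cref{ex: se(d)}. There, the Lie algebra of $\Stab(q)$ is $\mathfrak{h}_q=\{(S,-Sq) : S^{t}=-S\}$, where a pair $(S,t)$ denotes the matrix $\begin{bmatrix} S & t\\ 0 & 0\end{bmatrix}$. Membership $\mathfrak{h}_x\subseteq \mathfrak{h}_y+\mathfrak{h}_z$ then becomes a linear-algebra condition on skew-symmetric matrices acting on the difference vectors $x-z$ and $y-z$.

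First, I unwind the inclusion. A general element of $\mathfrak{h}_y+\mathfrak{h}_z$ is $(S_1+S_2,\,-S_1y-S_2z)$ with $S_1,S_2$ skew. So $(S,-Sx)$ lies in the sum if and only if there are skew $S_1,S_2$ with $S_1+S_2=S$ and $S_1y+S_2z=Sx$. Substituting $S_2=S-S_1$, the second equation becomes
\begin{equation*}
S_1(y-z)=S(x-z).
\end{equation*}
Hence $\mathfrak{h}_x\subseteq\mathfrak{h}_y+\mathfrak{h}_z$ holds if and only if, for every skew $S$, the vector $S(x-z)$ lies in $\{S_1(y-z) : S_1^{t}=-S_1\}$.

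The key lemma is that for a nonzero $u\in\mathbb{R}^{d}$, the set $\{Tu : T^{t}=-T\}$ equals $u^{\perp}$. The inclusion into $u^{\perp}$ follows from $u^{t}Tu=0$. Conversely, for $w\perp u$, the skew matrix $T=(wu^{t}-uw^{t})/|u|^{2}$ satisfies $Tu=w$. Since $x,y,z$ are distinct, both $x-z$ and $y-z$ are nonzero. The condition therefore reads $(x-z)^{\perp}\subseteq (y-z)^{\perp}$.

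Both sides are hyperplanes of dimension $d-1$, so the inclusion is an equality. This holds if and only if $x-z$ and $y-z$ are parallel, which is exactly collinearity of $x,y,z$. The only step requiring care is the lemma identifying the image of a nonzero vector under all skew matrices with its orthogonal complement. It is elementary, but it is where $d\geq 2$ and distinctness of the points are used.
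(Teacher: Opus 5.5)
Your proof is correct, but it is organised differently from the paper's. The paper proves the two directions separately. For ``only if'' it writes $x=\lambda_1y+\lambda_2z$ with $\lambda_1+\lambda_2=1$ and checks that $\lambda_1(\Omega,-\Omega y)+\lambda_2(\Omega,-\Omega z)=(\Omega,-\Omega x)$; this is your chain of equivalences with $S_1=\lambda_1S$. For ``if'' it normalises to $y=0$, $z=(1,0,\dots,0)$, with $e_1,\dots,e_d$ the standard basis. It notes that the translational part $-\Omega'z$ of any element of $\mathfrak{h}_y+\mathfrak{h}_z$ has vanishing first coordinate. It then exhibits $(\Omega_i,-\Omega_ix)\in\mathfrak{h}_x$, where $\Omega_i=e_1e_i^{t}-e_ie_1^{t}$ and $x_i\neq0$ for some $i\neq1$; its translational part has first coordinate $-x_i\neq0$. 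In your language, the remark about $-\Omega'z$ is the easy half $u^{t}Tu=0$ of your lemma, and the hand-picked $\Omega_i$ replaces the surjectivity half.

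What your route buys is a coordinate-free argument with no ``without loss of generality'' step; the paper's normalisation tacitly uses that $q\mapsto\mathfrak{h}_q$ transforms compatibly under a Euclidean motion and rescaling. You also get both directions at once and the sharper intermediate statement $\mathfrak{h}_x\subseteq\mathfrak{h}_y+\mathfrak{h}_z\iff(x-z)^{\perp}\subseteq(y-z)^{\perp}$. The paper's route needs only one explicit matrix and no general lemma.

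One small correction to your closing remark: $d\geq2$ is never used. Your lemma and the hyperplane comparison go through verbatim for $d=1$, where both sides of the equivalence are trivially true. What you actually use is $x\neq z$ and $y\neq z$, so that $x-z$ and $y-z$ are nonzero.
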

\begin{proof}
    Suppose $x,y,z$ are collinear. Then one can write $x= \lambda_1 y + \lambda_2 z$, where $\lambda_1 + \lambda_2 =1$. Let $\Omega$ be a skew-symmetric matrix. One then has
    \begin{equation*}
     \lambda_1
        \begin{bmatrix}
           \Omega  &  - \Omega \cdot y\\
            0 & 0
        \end{bmatrix}+  \lambda_2
        \begin{bmatrix}
           \Omega  &  - \Omega \cdot z\\
            0 & 0
        \end{bmatrix}
        =         \begin{bmatrix}
           \Omega  &  - \Omega \cdot x\\
            0 & 0
        \end{bmatrix},
    \end{equation*}
    which shows clearly that $\mathfrak{h}_x \subseteq \mathfrak{h}_y +\mathfrak{h}_z$. This shows 'only if'. For 'if', we first note that without loss of generality we may take $y=0$ and $z=(1, 0, \dots, 0).$ We see that elements of $\mathfrak{h}_y +\mathfrak{h}_z$ are of the form
    \begin{align*}
     \begin{bmatrix}
           \Omega  & 0\\
            0 & 0
        \end{bmatrix}+  \lambda_2
        \begin{bmatrix}
           \Omega'  &  - \Omega' \cdot z\\
            0 & 0
        \end{bmatrix}
    \end{align*}
    The part $- \Omega' \cdot z$ will be of the form $[0, a_1, \dots, a_{n}]$. Now for any $x\in \mathbb{R}^{d}$, such that $x =(x_1, \dots, x_i, \dots, x_d)$, with at least some $x_i \neq 0$ for $i\neq 1$, one defines the skew-symmetric matrix $\Omega_i$
    \begin{equation*}
    (\Omega_i)_{k, \ell} =\begin{cases} 1
        & \textup{ if } k=1, \ell = i\\
        - 1
        & \textup{ if } k= i, \ell = 1\\
        0
        & \textup{ otherwise }
    \end{cases}.
    \end{equation*}
    It is then clear that $\begin{bmatrix}
        \Omega_i & - \Omega_i x\\
        0 & 0
    \end{bmatrix}\notin \mathfrak{h}_y +\mathfrak{h}_z$, but it is clearly an element of $\mathfrak{h}_x$, completing the proof.
\end{proof}

Using \Cref{prop: collinear} and \Cref{thm: k-extension}, it is easy to show that the $0$ and $1$-extension works in arbitrary dimensions, and the construction basically follows the usual argument. From \Cref{prop: collinear}, it also follows that one cannot apply  \Cref{thm: k-extension} to understand the $2$-extension in $3$ dimensions, unless the $4$ points where one performs the extension happen to lie on a plane.

\subsection{Parallel Redrawings}

In this final section, we show that the class of motion sheaves arising from parallel-redrawings problems leads to a very well-behaved class of problems inside of $M_{s,n}(\Gamma)$.

Let $G \leq \textup{Aff}(n, \mathbb{R})$ be the group generated by dilations and translations. Using homogeneous matrices, group elements $g\in G$ are represented by 
\begin{equation*}
    \begin{bmatrix}
        \lambda & 0 & \dots & 0 & x_1 \\
        0 &   \lambda & \dots & 0 & x_2 \\
        \vdots   &   \vdots  & \ddots & \vdots & \vdots \\
        0 &   0 & \dots & \lambda & x_n \\
        0 &   0 & \dots & 0 & 1 \\
    \end{bmatrix} \textup{ where } \lambda\in \mathbb{R}^{*}, x_1, \dots, x_n \in \mathbb{R}
\end{equation*}
The Lie algebra $\mathfrak{g}$ of this group consists of $(n+1) \times (n+1)$ matrices
\begin{equation*}
    \begin{bmatrix}
        t & 0 & \dots & 0 & x_1 \\
        0 &   t & \dots & 0 & x_2 \\
        \vdots   &   \vdots  & \ddots & \vdots & \vdots \\
        0 &   0 & \dots & t & x_n \\
        0 &   0 & \dots & 0 & 0 \\
    \end{bmatrix} \textup{ where } t, x_1, \dots, x_n \in \mathbb{R}
\end{equation*}

We will do computations in this algebra using the basis 
\begin{equation*}
\begin{array}{clcl}
L &=   \begin{bmatrix}
        1 &\dots &  0 & 0\\
         \vdots & \ddots & \vdots & \vdots\\
        0 &\dots &  1 & 0\\
        0 &\dots &  0 & 0\\
\end{bmatrix}, &
    T_i  &= \begin{bmatrix}
        0 &\dots &  0 & 0 \\
         \vdots & \ddots & \vdots & \vdots\\
        0 &\dots &  0 & 0 \\
        0 &\dots &  0 & 1\\
        0 &\dots &  0 & 0 \\
         \vdots & \ddots & \vdots & \vdots\\
                     0 &\dots &  0 & 0 \\
    \end{bmatrix},
\end{array}
\end{equation*}
where the nonzero row in $T_i$ is the $i$-th row, and $i\in \{1, \dots n\}$.  

\begin{lemma}\label{lem: Parallel-subalgebras}
    Any vector-subspace of $\mathfrak{g}$ is a subalgebra.
\end{lemma}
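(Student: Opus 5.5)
Since a vector subspace of $\mathfrak{g}$ is closed under addition and scalar multiplication, being a subalgebra only requires closure under the bracket $[X,Y]=XY-YX$. The plan is therefore to show directly that $[X,Y]$ always lies in the span of $X$ and $Y$; then any subspace $W\leq\mathfrak{g}$ automatically satisfies $[W,W]\subseteq W$.

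First compute the brackets of the basis $L, T_1,\dots,T_n$. The $T_i$ commute pairwise: $T_iT_j$ puts the last column of $T_i$ against the last row of $T_j$, and that row is zero, so $T_iT_j=0$. For the mixed bracket, $L$ is the identity on the first $n$ coordinates and zero on the last, so $LT_i=T_i$ and $T_iL=0$. Hence
\begin{equation*}
[T_i,T_j]=0, \qquad [L,T_i]=T_i .
\end{equation*}
Equivalently, identify $X=tL+\sum_i x_iT_i$ with the pair $(t,x)$, where $x=(x_1,\dots,x_n)\in\mathbb{R}^{n}$. Multiplying the block matrices gives
\begin{equation*}
[(t,x),(s,y)]=(0,\, t y - s x).
\end{equation*}

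The key observation is that $t y - s x = tY - sX$ as elements of $\mathfrak{g}$, where $X=(t,x)$ and $Y=(s,y)$. Indeed the $L$-component of $tY-sX$ is $ts-st=0$, and its translation part is $ty-sx$. So $[X,Y]=tY-sX\in\operatorname{span}(X,Y)$ for all $X,Y\in\mathfrak{g}$. Taking $X,Y$ in an arbitrary subspace $W$ gives $[X,Y]\in W$, which finishes the proof.

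No step is a real obstacle. The only point needing care is the sign convention in $[L,T_i]=\pm T_i$, and it is irrelevant to the conclusion, since either sign keeps $[X,Y]$ in $\operatorname{span}(X,Y)$.
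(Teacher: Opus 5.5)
Your proof is correct and is essentially the paper's argument: both rest on the bracket relations $[L,T_i]=T_i$ and $[T_i,T_j]=0$. Your identity $[X,Y]=tY-sX$ packages this slightly more cleanly. It gives $[X,Y]\in\textup{span}(X,Y)$ for every pair, so you avoid the paper's case split, which treats $A\leq\textup{span}(T_1,\dots,T_n)$ separately from a normalised basis $X_1=L+\sum_i a_{1i}T_i$, $X_j\in\textup{span}(T_1,\dots,T_n)$ for $j\geq 2$.
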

\begin{proof}
    We compute using the basis $L, T_1, \dots T_n$ defined above. We remark that
    \begin{align*}
        [L, T_i] &= T_i \textup{ for all } i\in \{1, \dots, n\}\\
        [T_i, T_j] &= 0 \textup{ for all } i,j \in \{1, \dots, n\}.
    \end{align*}
    Now, let $A\leq \mathfrak{g}$ be a vector-subspace of $\mathfrak{g}$. If $A\leq \textup{Span}(T_1, \dots, T_n)$, $[X,Y] = 0$ for all $X,Y\in A$. We may thus write $A= \textup{span}(X_1, \dots X_{s})$, and without loss of generality
    \begin{align*}
        X_1 &= L + \sum_{i=1}^{n} a_{1i} T_i\\
        X_j &= \sum_{i=1}^{n} a_{ji} T_i \text{ for } j\in \{2, \dots s\}
    \end{align*}
    Now, since
    \begin{align*}
        [X_1, X_j]& = X_j \textup{ for all }j\in \{2, \dots, s \}\\
        [X_i, X_j]&=0\textup{ for all } i,j \in \{2, \dots, s \},
    \end{align*}
    the conclusion clearly holds. 
\end{proof}

\begin{lemma}\label{lem: subalgebra of G is stabiliser}
    Any subalgebra $A$ that contains an infinitesimal generator of a dilation, given by $D= L + \sum_{i=1}^{n} a_iT_i $, is the Lie algebra of the stabiliser of an affine subspace of $L_A \subseteq \mathbb{R}^{n}$ of dimension $\dim(A) - 1$. Moreover, for such subspaces $L_{A + B} = L_{A} \vee L_{B}$, where $L_A \vee L_B$ is the smallest affine subspace containing both $L_A$ and $L_B$ and $L_{A\cap B} = L_A \cap L_B$, whenever $A\cap B$ contains a dilation.
\end{lemma}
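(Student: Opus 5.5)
We will work directly in the basis $L, T_1, \dots, T_n$ of $\mathfrak{g}$ and compute stabilisers explicitly. First, the infinitesimal action of a Lie algebra element $tL + \sum_i x_i T_i$ on a point $p \in \mathbb{R}^n$ is $p \mapsto t p + x$, where $x = (x_1, \dots, x_n)$. Consequently, the Lie algebra of the stabiliser of a point $q$ is spanned by $L - \sum_i q_i T_i$. For an affine subspace $P = q + W$, with $W \leq \mathbb{R}^n$ a linear subspace, we will show that the Lie algebra of its setwise stabiliser is
\begin{equation*}
\mathfrak{h}_P = \textup{span}\Big(L - \sum_i q_i T_i\Big) \oplus \{\textstyle\sum_i w_i T_i ~\vert~ w\in W\}.
\end{equation*}
The key point is that the element $tL + \sum_i x_i T_i$ preserves $P$ exactly when $tp + x \in W$ for all $p \in P$, which is equivalent to $tq + x \in W$. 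This condition depends only on $q$, not on the chosen representative, because $tW \subseteq W$. From this description, $\dim \mathfrak{h}_P = \dim W + 1$.

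Next, take a subalgebra $A$ containing $D = L + \sum_i a_i T_i$. By \Cref{lem: Parallel-subalgebras}, $A$ is just a subspace, and as in that proof we write $A = \textup{span}(D) \oplus A_T$ with $A_T = A \cap \textup{span}(T_1, \dots, T_n)$. This splitting holds because the $L$-coefficient map $A \to \mathbb{R}$ is onto with kernel $A_T$. We identify $A_T$ with a linear subspace $W_A \leq \mathbb{R}^n$ and set $q = -a$ and $L_A := -a + W_A$. Then $A = \mathfrak{h}_{L_A}$ by the computation above, and $\dim L_A = \dim W_A = \dim A - 1$. Since the stabiliser determines the subspace (the $L$-component fixes $q$ modulo $W$, and $W$ is read off directly), $L_A$ is well-defined and independent of the choice of $D$. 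Choosing a different dilation $D' = D + \tau$ with $\tau \in A_T$ only shifts $q$ by an element of $W_A$.

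For the join, if $A$ and $B$ contain dilations $D_A$ and $D_B$, then
\begin{equation*}
A + B = \textup{span}(D_A) + A_T + B_T + \textup{span}(D_B - D_A).
\end{equation*}
Here $D_B - D_A = \sum_i (a_i - b_i) T_i$ corresponds to the vector $q_B - q_A$, up to sign. So $A + B$ contains a dilation and has translation part $W_A + W_B + \mathbb{R}(q_A - q_B)$. This is exactly the direction space of $L_A \vee L_B$, and the point $q_A$ lies on it, so $L_{A+B} = L_A \vee L_B$.

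For the intersection, assume $A \cap B$ contains a dilation $D$. Then $D$ serves as a common dilation for both $A$ and $B$, so $L_A$ and $L_B$ share the base point $q$, and $L_A \cap L_B = q + (W_A \cap W_B)$. Meanwhile $A \cap B = \textup{span}(D) \oplus (A_T \cap B_T)$, which gives $L_{A \cap B} = q + (W_A \cap W_B) = L_A \cap L_B$.

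The main obstacle is a subtle point in the intersection case. The hypothesis that $A \cap B$ contains a dilation must be what forces the two affine spaces to meet; if they do not meet, the intersection of the algebras is purely translational. I would therefore verify carefully that a common dilation really yields a common base point. The rest is routine linear algebra.
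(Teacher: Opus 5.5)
Your proof is correct. For the first part you follow essentially the paper's route: choose the dilation $D$, take $-a$ as base point and $A\cap\textup{span}(T_1,\dots,T_n)$ as direction space. The one difference is that you derive the Lie algebra of $\textup{Stab}(q+W)$ from the infinitesimal criterion $tq+x\in W$. The paper instead writes down the subgroup of dilations about points of $L_A$ and translations along $L_A$, and asserts that it is the stabiliser.

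For the join and meet the routes genuinely differ. The paper observes that $L_A\subseteq X$ iff $A$ preserves $X$, hence $A\subseteq B\iff L_A\subseteq L_B$. So $A\mapsto L_A$ is an order isomorphism from dilation-containing subalgebras onto nonempty affine subspaces, and sums and intersections go to joins and meets automatically. You instead compute both sides in the splitting $A=\textup{span}(D)\oplus A_T$, obtaining $L_{A+B}=q_A+(W_A+W_B+\mathbb{R}(q_A-q_B))$ and $L_{A\cap B}=q+(W_A\cap W_B)$.

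The paper's argument is shorter and coordinate-free, and it is what underlies the lattice isomorphism in the subsequent remark. Yours is more self-contained: the paper's ``follows easily'' leaves implicit that $A+B$ and $A\cap B$ are the join and meet among dilation-containing subalgebras. Your version also shows exactly where the dilation hypothesis enters.

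The ``obstacle'' you flag at the end is already settled by your own well-definedness remark. Since $L_A=-d+W_A$ for any dilation $D=L+\sum_i d_iT_i\in A$, a common dilation in $A\cap B$ immediately gives the common base point $-d$.
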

\begin{proof}
We can choose a basis containing $D$ and infinitesimal generators of translations $\tau_{k} = \sum_{i=1}^{n} a_{ki} T_i,$ for $k\in \{1, \dots s\}$. Letting $P_D = (-a_1, \dots, -a_n) \in \mathbb{R}^n$, one has that $D\cdot P_D = 0$, so the element $D$ acts trivially on $P_D$ under the infinitesimal action of $\mathfrak{g}$ on $\mathbb{R}^{n}$. Let 
\begin{align*}
    v_1 &= (a_{11}, \dots, a_{1n})\\
     & \vdots\\
    v_s &= (a_{s1}, \dots, a_{sn}).
\end{align*}
We then let $L_{A}:= P_D + \textup{Span}(v_1, \dots v_s)$. This is an affine subspace of dimension $s = \dim(A) - 1$. We additionally define $v_0 = (a_1, \dots a_n)$.
Now, one notes that $A$ is the Lie algebra of the subgroup generated by the matrices 
\begin{align*}
\begin{bmatrix}
\lambda \textup{Id}_n & (\lambda -1)v_0^{T} + t_1 v_1^{T} + \dots + t_s v_s^{T}\\
 0 &  1 
 \end{bmatrix},
\textup{ where } \lambda \in \mathbb{R}^{*}, t_1, \dots, t_s \in \mathbb{R}
\end{align*}
This subgroup is precisely the stabiliser of $L_A$, and we note that it consists precisely of the dilations fixing a point of $L_A$, and translations in the direction of $L_A$ (i.e., translations with vectors in $\textup{Span}(v_1, \dots v_s)$). To finish the proof, we prove that $L_{A\cap B} = L_{A} \cap L_{B}$ and $L_{A+B} = L_A \vee L_B$. We first note that by the description of the stabilisers above, one has that $L_A \subseteq X$ if and only if $A$ fixes $X$. Hence, $A\subseteq B$, if and only if one has $L_A \subseteq L_B$. The claim about the sum and intersection then follows easily. 
\end{proof}

\begin{remark}
    By associating to subalgebras containing only translations, the space of points at infinity giving the directions of the translations, one obtains using the construction in \Cref{lem: subalgebra of G is stabiliser}, a lattice isomorphism between subspaces of $\mathbb{P}^{n}$ and subalgebras of $\mathfrak{g}$. 
\end{remark}

We can use these observations to describe various 'parallel redrawings' type problems using graphs of groups.

\begin{definition}
    Let $\Gamma=(V,E)$ be an $r$-regular hypergraph. A realisation of $\Gamma$ as a subspace arrangement of $\Gamma$ with vertex dimensions $s$, is an assignment $$r: V \rightarrow \textup{Gr}(s + 1, \mathbb{R}^{n+1}): v\mapsto r(v),$$ such that $r(v)$ is an $s$-dimensional affine subspace of $\mathbb{R}P^{n}$ for each $v\in V$. One defines $r(e) := \bigvee_{v\in e} L_v$ for $e\in E$. One says that two realisations $r, r'$ are parallel redrawings of each other if $r(e)$ is parallel to $r'(e)$ for every $e\in E$.
\end{definition}

To any vertex-regular subspace arrangement $r$, we assign a Lie graph-of-groups realisation $\rho_{r}(\Gamma)$ by setting
\begin{align*}
    &v\mapsto \textup{Stab}(r(v))\\    
    &e \mapsto \cap_{v\in e}\textup{Stab}(r(v)).
\end{align*}
One can show, in a completely analogous way as in \cite[Theorem 4.3]{graphsofgroups}, that there is a $1-1$ correspondence between subspace arrangements $r$ of vertex dimension $s$ and graph-of-groups realisations given by $\rho_r$ and $r$ is a parallel redrawing of $r'$ if and only if there exists a motion $M$ of $\rho_r$ such that $M(\rho_r) = \rho_{r'}$.

\begin{lemma}\label{lem : Parallel lemma}
    The set of motion sheaves $\mathcal{M} \in \textup{Sh}(I(\Gamma))$ that come from a graph-of-groups realisation $\rho_{r}$, for $r$ a subspace arrangement of vertex dimension $s$ is dense in $\textup{Gr}(s +1, \mathfrak{g})$. 
\end{lemma}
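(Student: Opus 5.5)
The claim is that the set of subalgebras $\textup{Stab}(r(v))$'s Lie algebra, as $r(v)$ ranges over $s$-dimensional affine subspaces, is dense in $\textup{Gr}(s+1,\mathfrak{g})$; the statement about motion sheaves then follows by taking products over $V$. The plan is to identify this set explicitly as a Zariski open subset of $\textup{Gr}(s+1,\mathfrak{g})$, and then use irreducibility of the Grassmannian.

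\textbf{Step 1: image equals the dilation-containing subspaces.} By \Cref{lem: Parallel-subalgebras}, every $(s+1)$-dimensional subspace $A\leq\mathfrak{g}$ is a subalgebra. By \Cref{lem: subalgebra of G is stabiliser}, whenever $A$ contains an element $D=L+\sum_i a_iT_i$, it is the Lie algebra of the stabiliser of an affine subspace $L_A$ of dimension $\dim(A)-1=s$. Conversely, for an $s$-dimensional affine subspace $P+\textup{Span}(v_1,\dots,v_s)$, the explicit description in that lemma shows that its stabiliser has Lie algebra spanned by $L-\sum_i P_iT_i$ and the translation generators $\sum_i (v_k)_iT_i$; this algebra contains a dilation and has dimension $s+1$. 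Hence the image of $r(v)\mapsto \mathfrak{h}_{r(v)}$ is exactly
$$U=\{A\in\textup{Gr}(s+1,\mathfrak{g})~\vert~ A\not\leq T\},\qquad T=\textup{Span}(T_1,\dots,T_n).$$
Here we use that $A$ contains an element with nonzero $L$-coefficient if and only if $A\not\leq T$, after rescaling.

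\textbf{Step 2: $U$ is open and dense.} The complement $\textup{Gr}(s+1,T)\subseteq\textup{Gr}(s+1,\mathfrak{g})$ is Zariski closed. In the projection-operator coordinates it is cut out by $P\,\pi_L=0$, where $\pi_L$ is the orthogonal projection onto $T^{\perp}$. Equivalently, one can note that $\dim(A\cap T)\geq s+1$ is a closed condition by \Cref{lem: Intersections_of_subspaces}, applied with $T$ extended to a fixed subspace. This closed set is proper, since $\textup{Span}(L)\oplus(\text{any } s\text{-dim subspace of } T)$ lies outside it; note that $s+1\leq n+1=\dim\mathfrak{g}$. Its dimension is $(s+1)(n-s-1)<(s+1)(n-s)$, so $U$ is a nonempty Zariski open subset of the irreducible smooth variety $\textup{Gr}(s+1,\mathfrak{g})$.

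\textbf{Step 3: conclusion.} \Cref{dense-Euclidean} then gives that $U$ is dense in the Euclidean topology as well. Taking products shows that $U^{|V|}$ is dense and open in $\textup{Gr}(s+1,\mathfrak{g})^{|V|}\cong M_{s+1,n+1}(\Gamma)$.

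The main obstacle is the converse direction in Step 1: checking carefully that every $s$-dimensional affine subspace gives a subalgebra containing a dilation, of the right dimension. This requires reading off $\textup{Stab}$ from the explicit group description. The degenerate edge case $s+1=\dim\mathfrak{g}$ is trivial, since then $U$ is a single point equal to the whole Grassmannian.
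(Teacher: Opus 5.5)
Your proposal is correct and follows the paper's argument. Via \Cref{lem: Parallel-subalgebras} and \Cref{lem: subalgebra of G is stabiliser}, you identify the image as $\{A : A \not\leq \textup{Span}(T_1,\dots,T_n)\}$, which is a nonempty Zariski-open subset of the irreducible Grassmannian. You also make explicit what the paper leaves implicit: the linear equation $P\pi_L=0$ cutting out the complement, non-emptiness, and Euclidean density via \Cref{dense-Euclidean}. Your parenthetical alternative via \Cref{lem: Intersections_of_subspaces} does not literally apply, since that lemma concerns subspaces of equal dimension, but it is not needed.
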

\begin{proof}
    We again use the basis $L, T_1, \dots, T_n$ for $\mathfrak{g}$. We note that a subspace $A\in \textup{Gr}(s+1, \mathfrak{g})$ is a stabiliser of an affine subspace if and only if $A \cap \langle T_1, \dots T_n\rangle \neq A$ by \Cref{lem: subalgebra of G is stabiliser} and \Cref{lem: Parallel-subalgebras}. Since this defines a Zariski open subset of  $\textup{Gr}(s+1, \mathfrak{g})$, the lemma follows. 
\end{proof}

\begin{corollary}\label{parallel-graphs}
    Let $\Gamma$ be a graph, and let $n\in \mathbb{N}$. For almost all realisations of $\Gamma$ as a subspace arrangement in $\mathbb{R}^{n}$ with vertex dimension $0$, one has that the associated motion sheaf satisfies $h^{1}(\mathcal{M}) = 0$ if and only if $(n-2)\Gamma$ is $(n, n+1)$ sparse. 
\end{corollary}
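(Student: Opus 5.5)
The plan is to reduce the statement to \Cref{thm: main theorem}, using \Cref{lem : Parallel lemma} to show that the realisations in question fill a Zariski open subset of the relevant variety of motion sheaves.

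\textbf{Step 1: the motion sheaves lie in $M_{1,N}(\Gamma)$.} A subspace arrangement of vertex dimension $0$ is a map $p: V\to\mathbb{R}^n$. By \Cref{lem: subalgebra of G is stabiliser}, the Lie algebra of $\Stab(p(v))$ is the line spanned by the dilation generator $D_{p(v)} = L - \sum_i p(v)_i T_i$. For an edge $uv$ with $p(u)\neq p(v)$, the same lemma gives $\mathfrak{h}_u\cap\mathfrak{h}_v = 0$, since $L_{A\cap B}=L_A\cap L_B$ is empty. Hence every such realisation yields a motion sheaf in $M_{1,N}(\Gamma)$, where $N=\dim(\mathfrak{g})$, and the edge stalks satisfy $\dim(\mathcal{M}(e)) = N$. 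I will carry out the whole argument with this $N$ and translate the parameters into the statement's $n$ in Step 4.

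\textbf{Step 2: the realisations form a Zariski open subset.} Consider the map $\Phi:(\mathbb{R}^n)^{V}\to \mathbb{P}(\mathfrak{g})^{V}$ given by $p\mapsto (\langle D_{p(v)}\rangle)_{v}$. In the basis $L,T_1,\dots,T_n$ this is exactly the standard affine chart $\{A : A\not\subseteq \langle T_1,\dots,T_n\rangle\}$ of each factor. So $\Phi$ is an isomorphism of real varieties onto a nonempty Zariski open subset $W$; this is the content of \Cref{lem : Parallel lemma} for $s=0$. It follows that "almost all $p$" can be read as "$p$ in a nonempty Zariski open subset of $(\mathbb{R}^n)^V$", which is dense.

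\textbf{Step 3: transport the main theorem.} By \Cref{thm: main theorem}, applied with $\dim\mathbb{V}=N$, there is a nonempty Zariski open $U\subseteq M_{1,N}(\Gamma)$ on which $h^1=0$ holds exactly when the sparsity condition holds. Since $\mathbb{P}(\mathfrak{g})^V$ is irreducible, $U\cap W$ is nonempty and Zariski open. Therefore $\Phi^{-1}(U\cap W)$ is a nonempty Zariski open, hence dense, set of point configurations with the claimed equivalence. I would also intersect with the open set where the points are pairwise distinct. On that set the necessity direction also follows directly from \Cref{cor: necessary_concrete} with $r=2$, $s=1$, since any two distinct point-stabilisers intersect trivially.

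\textbf{Step 4: match the counts (main obstacle).} The remaining work is bookkeeping: the multiplicity and the $(d,\ell)$ parameters produced by \Cref{thm: main theorem} and \Cref{cor: necessary_concrete} must be identified with those in the statement. \Cref{cor: necessary_concrete} gives a multiplicity of $(r-1)N - rs = N-2$ on each edge, and the count $(N-s)|V'| - N = (N-1)|V'|-N$. I would therefore first recompute $\dim(\mathfrak{g})$ and $\dim(\mathcal{M}(v))$ carefully in terms of the ambient dimension used in the statement. Then I would check that the resulting condition is precisely that $(n-2)\Gamma$ is $(n,n+1)$-sparse, adjusting the identification of $N$ with the statement's parameter if necessary. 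This index identification is where I expect to have to be most careful.
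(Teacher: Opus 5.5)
Your Steps 1--3 are exactly the paper's argument, and they are fine: the paper's proof is just the one-line citation of \Cref{thm: main theorem} and \Cref{lem : Parallel lemma}. The gap is Step 4. You defer it with the plan of ``adjusting the identification of $N$ if necessary'' until the count reads ``$(n-2)\Gamma$ is $(n,n+1)$-sparse'', and that cannot work.

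The algebra $\mathfrak g$ has basis $L,T_1,\dots,T_n$, so $N=\dim\mathfrak g=n+1$. Point stabilisers are lines, so $s=1$, $\dim\mathcal M(v)=n$ and $\dim\mathcal M(e)=n+1$. \Cref{thm: main theorem} and \Cref{cor: necessary_concrete} then give that, generically, $h^1(\mathcal M)=0$ if and only if $(N-2)\Gamma=(n-1)\Gamma$ is $(N-1,N)=(n,n+1)$-sparse. Equivalently, $(n-1)|E'|\le n|V'|-(n+1)$, which is the classical parallel-drawing count. The sparsity parameters $(N-1,N)=(n,n+1)$ already force $N=n+1$, so no choice of $N$ yields multiplicity $n-2$.

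In fact the statement as printed is false, so this bookkeeping is not routine. Take $n=3$ and $\Gamma=K_3$. The graph $1\cdot K_3$ is $(3,4)$-sparse. However, \Cref{thm: Maxwell-rule} gives $h^0-h^1=3\cdot 3+3\cdot 4-6\cdot 3=3$. For distinct points, $p_\Gamma$ embeds the $4$-dimensional space of translations and dilations into $H^0$, so $h^1\ge 1$ for every such realisation. Geometrically, the three edge directions of a triangle are coplanar. Similarly, for $n=2$ the graph $0\cdot K_4$ is trivially $(2,3)$-sparse, while $h^0-h^1=2<3$.

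What your argument, and the paper's, actually proves is the corrected statement with $(n-1)\Gamma$ in place of $(n-2)\Gamma$. It also only holds for $n\ge 2$, since \Cref{thm: main theorem} needs $\dim\mathbb V=n+1\ge 3$. You should carry out the dimension count explicitly and state this corrected conclusion, rather than leaving the final identification open.
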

\begin{proof}
    Follows from \Cref{thm: main theorem} and \Cref{lem : Parallel lemma}. 
\end{proof}

\Cref{parallel-graphs} yields a characterisation for so-called parallel rigidity, and it had already been proven (see \cite[Theorem 8.2.2]{Whiteley1996}, \cite[Theorem 6.5]{develin2007rigidity}). If \Cref{Szegö conjecture} is true, then for $s\leq \frac{n+2}{4}$, one could prove in the same way that $h^{1}(\Gamma, \mathcal{M}) = 0$ holds for generic $\mathcal{M}$ coming from a subspace arrangement of vertex dimension $s$ if and only if $(n-2s) \Gamma$ is $(n-s, n)$ sparse. We remark that \Cref{parallel-graphs} also follows from \Cref{cor: main - thm}.  

In fact, the so-called problem of parallel redrawings/liftings of scenes \cite{Whiteley1989} also has a characterisation in terms of a sparsity condition of the hypergraph. This problem on hypergraphs can also be described in terms of graph-of-groups realisations (see \cite[Section 4.3]{graphsofgroups}). The fact that the resulting motion sheaves are dense in $M_{s, n}(\Gamma)$ suggests that a more general theorem may hold for $M_{s,n}(\Gamma)$, and that the underlying reason for the characterisation is that the resulting motion sheaves are dense in $M_{s, n}(\Gamma)$.

\section{Acknowledgements}

This work was partially supported by the Wallenberg AI, Autonomous Systems and
Software Program (WASP) funded by the Knut and Alice Wallenberg Foundation. I would also like to thank Klara Stokes for useful discussions and comments. 

\bibliography{refs}

\end{document}